\documentclass{amsart}
\usepackage{amssymb,amsmath,amscd,xy,graphicx,textcomp}
\newtheorem{theorem}{Theorem}[section]
\newtheorem{lemma}[theorem]{Lemma}
\newtheorem{corollary}[theorem]{Corollary}
\newtheorem{definition}[theorem]{Definition}

\newtheorem{remark}[theorem]{\it Remark}
\newtheorem{example}[theorem]{Example}
\newtheorem{proposition}[theorem]{Proposition}

\xyoption{arrow}

\xyoption{matrix}

\setcounter{tocdepth}{1}


\def\C{\mathbb{C}}
\def\R{\mathbb{R}}
\def\Z{\mathbb{Z}}

\def\tree{\mathcal{T}}
\def\trop{\mathbb{T}}

\def\ql{\backslash \! \backslash}

\title{Toric degenerations and tropical geometry of branching algebras}
\author{Christopher Manon} 
\thanks{University of California, Berkeley, chris.manon@math.berkeley.edu, This work was supported by the NSF fellowship DMS-0902710}

\begin{document}

\begin{abstract}
We construct polyhedral families of valuations on the branching algebra of a morphism of reductive groups. 
This establishes a connection between the combinatorial rules for studying a branching problem and the tropical geometry of the branching algebra.  
In the special case when the branching problem comes from the inclusion of a Levi subgroup or a diagonal subgroup, we 
use the dual canonical basis of Lusztig and Kashiwara to build toric deformations of the branching algebra.  
\end{abstract}










\maketitle

\tableofcontents

\smallskip
\noindent
Keywords: branching problem, reductive group, tropical geometry, toric variety, valuation.

\section{Introduction}

For every map $\phi:H \to G$ of connected reductive groups over $\C,$ we may regard the representations
of $G$ as representations of $H$ via its action through $\phi.$  For any irreducible representation $V(\lambda)$ 
of $G$ there is a direct sum decomposition into irreducible $H$ representations. 

\begin{equation}
V(\lambda) = \bigoplus_{\beta \in \Delta_H} Hom_H(V(\beta), V(\lambda)) \otimes V(\beta)\\
\end{equation}

\noindent 
The vector spaces $Hom_H(V(\beta), V(\lambda))$ determine the restriction functor $\phi^*: Rep(G) \to Rep(H)$
on the categories of finite dimensional representations of $H$ and $G,$ and a calculation of the dimensions of these spaces is called a branching rule associated to $\phi.$ Recall that equivalence classes of irreducible representations of $G$ are in bijection with the lattice points of convex rational cone $\Delta_G,$ a Weyl chamber of $G.$   Solving the branching problem can be seen as assigning non-negative integers, the dimensions of
the above spaces, to the dominant weights in $\Delta_H \times \Delta_G.$ This leads naturally to questions about the nature of the sets of points
where these numbers are non-zero.  One general approach to this problem is to bring in techniques from commutative algebra by integrating the spaces $Hom_H(V(\beta), V(\lambda))$ into a commutative algebra, $R(\phi).$  This algebra is constructed from
the algebra of invariants by a maximal unipotent $U^+ \subset G$ in the coordinate ring $\C[G].$  As a representation of $G,$ this algebra is the multiplicity free sum of the irreducible representations of $G.$

\begin{equation}
R_G = \C[G]^U = \bigoplus_{\lambda \in \Delta_G} V(\lambda)\\
\end{equation}

\noindent
This algebra is finitely generated by the representations associated
to the fundamental weights in $\Delta_G.$  Multiplication can be computed 
component-wise by the Cartan product, which is projection onto the highest weight component of the tensor product. 

\begin{equation}
V(\lambda) \otimes V(\eta) \to V(\lambda + \eta)\\
\end{equation}

\begin{definition}
For $\phi:H \to G$ a map of reductive groups, the branching algebra for $\phi$ is defined as the invariant ring of $R_H \otimes R_G$ by $H,$ or equivalently as the $U_H-$invariants of $R_G,$ where $U_H \subset H$ is a maximal unipotent subgroup.  

\begin{equation}
R(\phi) = [R_H \otimes R_G]^H =  R_G^{U_H}\\
\end{equation}

\end{definition}

It is worth mentioning two special cases of this construction.  For $\phi = \bold{id},$ the algebra $R(\bold{id})$ is the subalgebra of highest weight vectors in $R_G,$ in other words it is the toric algebra attached to the semigroup of dominant weights of $G,$ $\C[\Delta].$    For the unity morphism $i_G: 1 \to G,$ the algebra is just $R_G.$ For all $\phi,$ $R(\phi)$ is finitely generated and graded by the monoid of dominant weights $\Delta_H \times \Delta_G,$ and the non-zero branching numbers for $\phi:H \to G$ correspond to the non-zero graded components of $R(\phi),$ we call this submonoid $C(\phi) \subset \Delta_H \times \Delta_G.$  Since $R(\phi)$ is finitely generated, the monoid $C(\phi)$ is finitely generated as well, this implies that its non-negative real saturation is a convex rational cone in $\Delta_H\times \Delta_G.$

The cone $C(\phi)$ itself can be difficult to understand so in order to get more information about the polyhedral and algebraic properties of $C(\phi)$ it helps to find a cone $D$ which can be understood more easily, and a surjection $D \to C(\phi).$  Conceptually, the cone $D$ "enriches" the branching structure described by $C(\phi),$ and meaningful $D$ correspond to combinatorial rules which solve or simplify the branching problem, such as the Littlewood-Richardson rule or the Pieri rule in type $A.$  Throughout this paper we will describe two ways a cone like $D$ can be constructed, how to combine these constructions, and some of the resulting algebraic and geometric implications for $R(\phi)$.  The principle driving our constructions is that different meaningful $D,$ arising from different combinatorial rules for understanding branching problems, correspond to flat degenerations of $R(\phi),$ and that these degenerations fit together into a polyhedral structure that is described by elements of tropical geometry.

The connection with tropical geometry comes from our use of valuations to construct the degenerations of $R(\phi).$   It is a theorem from tropical geometry (see \cite{P} and Section \ref{trop}) that the tropical variety $tr(I)$ of any ideal $I$ from a presentation of $R(\phi)$ can be constructed as an image of $\mathbb{V}_{\trop}(R(\phi)),$ the set of all valuations on $R(\phi)$ into the tropical real line $\trop$ which are trivial on $\C \subset R(\phi).$   We define a complex of rational, pointed cones $K_{\phi}$ for each morphism $\phi$ in the category of reductive groups which is informed by the factorizations of $\phi,$ and we show that points in this complex define valuations on the branching algebra $R(\phi).$   These valuations do not always have toric associated graded algebras, so we explain how they can be enhanced to toric degenerations of $R(\phi)$ in the special cases when $\phi$ is a diagonal embedding $\delta_n: G \to G^n$ or an inclusion of a Levi subgroup $i_L:L \to G$.

\subsection{Construction of the branching complex}

For any branching problem $\phi: H \to G$ it is possible to enrich $C(\phi)$ by studying factorizations of $\phi.$ 

\begin{equation}
\phi = \phi_m \circ \ldots \circ \phi_0\\
\end{equation}
$$
\begin{CD}
H @>\phi_0>> K_1 @>\phi_1>> \ldots @>\phi_{m-1}>> K_m @>\phi_m>> G\\
\end{CD}
$$

\noindent
By the semisimplicity of the categories $Rep(K_i),$ the space $Hom_H(V(\eta), V(\lambda))$ decomposes into a direct sum. 

\begin{equation}
Hom_H(V(\eta), V(\lambda)) = \bigoplus_{\eta, \vec{\tau}, \lambda \in \Delta_{H \times \ldots \times G}} Hom_H(V(\eta), V(\tau_1))\otimes \ldots \otimes Hom_{K_m}(V(\tau_m), V(\lambda))\\
\end{equation}

\noindent
We think of elements of these spaces as diagrams of intertwiners .

$$
\begin{CD}
V(\eta) @>f_0>> V(\tau_1) \ldots V(\tau_{m}) @>f_m>> V(\lambda)\\
\end{CD}
$$

\noindent
We abbreviate the summands on the right above as $W(\vec{\phi}, \vec{\tau}) \subset R(\phi),$ and $R(\phi)$ has a direct sum decomposition into these spaces as a vector space. 
Forgetting all but the dominant weight data results in a fiber product cone $C(\vec{\phi}) = C(\phi_0) \times_{\Delta_{K_1}} \ldots \times_{\Delta_{K_m}} C(\phi_m)$ with a natural map to  $C(\phi_m \circ\ldots \circ \phi_0) = C(\phi).$  On the level of commutative algebra the above direct sum decomposition is a filtration of the multiplication operation on $R(\phi)$. The following will be discussed in Section \ref{branch}.

\begin{theorem}\label{branchfiltration}
For any factorization of a morphism of reductive groups $\phi = \phi_n \circ \ldots \circ \phi_1,$ there is a filtration of $R(\phi)$ defined on the spaces $W(\vec{\phi}, \vec{\lambda}),$ by the ordering on the dominant weight labels.

\begin{equation}
W(\vec{\phi}, \vec{\lambda}) W(\vec{\phi}, \vec{\eta}) \subset \bigoplus_{\vec{\beta} \leq \vec{\lambda} + \vec{\eta}} W(\vec{\phi}, \vec{\beta})\\
\end{equation}

\noindent
The associated graded algebra is isomorphic to $[\otimes_{i =1}^n R(\phi_i)]^{T_1 \times \ldots \times T_{n-1}}  \subset  \otimes_{i =1}^n R(\phi_i).$  Furthermore,  $[\otimes_{i =1}^n R(\phi_i)]^{T_1 \times \ldots \times T_{n-1}}$ is a flat degeneration of $R(\phi).$
\end{theorem}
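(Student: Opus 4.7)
My plan is to construct a filtration of $R(\phi)$ indexed by the product of dominance orders on $\Delta_H \times \Delta_{K_1} \times \ldots \times \Delta_G$, verify its multiplicativity, identify the associated graded with the claimed torus-invariant subalgebra, and conclude flatness via the Rees-algebra construction. Concretely, I would set $F_{\vec{\lambda}} R(\phi) = \bigoplus_{\vec{\tau} \leq \vec{\lambda}} W(\vec{\phi}, \vec{\tau})$ where $\vec{\tau} \leq \vec{\lambda}$ means $\tau_i \leq \lambda_i$ in the dominance order on $\Delta_{K_i}$ for each $i$. A $\mathbb{Z}_{\geq 0}$-valued refinement, needed for the Rees construction at the end, is obtained by composing with a strictly positive linear functional on the weight lattice.

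The technical heart of the argument is the multiplicativity inclusion $W(\vec{\phi}, \vec{\lambda}) \cdot W(\vec{\phi}, \vec{\eta}) \subset \bigoplus_{\vec{\beta} \leq \vec{\lambda} + \vec{\eta}} W(\vec{\phi}, \vec{\beta})$. The product on $R(\phi)$ is induced by the Cartan projection $V(\lambda_n) \otimes V(\eta_n) \twoheadrightarrow V(\lambda_n + \eta_n)$ in $R_G$. The key classical input is the weight bound: every irreducible component $V(\nu) \subset V(\tau) \otimes V(\tau')$ in the representation ring of a reductive group $K$ satisfies $\nu \leq \tau + \tau'$ in the dominance order, with equality only on the Cartan (top) component. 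Iterating this bound down through the factorization, by decomposing each product into $K_i$-isotypic pieces at each intermediate group $K_{n-1}, K_{n-2}, \ldots, H$, gives the componentwise inequality $\vec{\beta} \leq \vec{\lambda} + \vec{\eta}$.

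The same analysis identifies the associated graded: the top-degree contribution of $W(\vec{\phi}, \vec{\lambda}) \cdot W(\vec{\phi}, \vec{\eta})$ in $W(\vec{\phi}, \vec{\lambda} + \vec{\eta})$ arises from taking Cartan products independently at each intermediate group $K_i$. This is precisely the multiplication on $\otimes_{i=1}^n R(\phi_i)$ restricted to the subspace where the $K_i$-weight on the target side of $R(\phi_i)$ agrees with the $K_i$-weight on the source side of $R(\phi_{i+1})$ for $i = 1, \ldots, n-1$; with the standard conventions this matching is cut out exactly by $T_i$-invariance, yielding $\gr R(\phi) \cong [\otimes_i R(\phi_i)]^{T_1 \times \ldots \times T_{n-1}}$. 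Flatness of the degeneration then follows from the standard Rees-algebra construction: the Rees algebra $R' = \bigoplus_k t^{-k} F_k R(\phi) \subset R(\phi)[t, t^{-1}]$ is a flat $\mathbb{C}[t]$-algebra with generic fiber $R(\phi)$ and special fiber $\gr R(\phi)$.

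I expect the main obstacle to be the iterated weight bound itself. The Cartan product in $R_G$ does not literally factor through the intermediate groups, so a single invocation of the classical bound at $K_n$ merely produces summands ordered in the $K_n$-dominance order; these must then be traced back through the full branching decomposition to yield the componentwise bound on $\vec{\beta}$. This will require an induction on the length of the factorization together with careful handling of the intertwiner compositions that realize the spaces $W(\vec{\phi}, \vec{\tau})$ as summands of $R(\phi)$, and in particular a check that the dominance filtration on each factor is compatible with the composition maps $Hom_H(V(\tau_0), V(\tau_1)) \otimes \ldots \otimes Hom_{K_m}(V(\tau_m), V(\tau_{m+1})) \to Hom_H(V(\tau_0), V(\tau_{m+1}))$.
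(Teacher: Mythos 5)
Your proposal is correct and follows essentially the same route as the paper: filter by the dominance order on the intermediate weight labels, use the classical bound that every component $V(\nu)\subset V(\tau)\otimes V(\tau')$ satisfies $\nu\leq\tau+\tau'$ (applied by inserting isotypic decompositions at each intermediate group into the Cartan-product description of multiplication), identify the top term with componentwise Cartan products, i.e.\ with the $T_1\times\ldots\times T_{n-1}$-invariants of $\otimes_i R(\phi_i)$, and obtain flatness by refining the partial order with a functional strictly positive on positive roots and invoking the Rees construction. The ``obstacle'' you flag is handled in the paper exactly as you anticipate, by writing the product as $C_*\circ(\cdot\otimes\cdot)\circ C^*$ and decomposing the middle tensor product over the intermediate group, with the general case following by the same argument.
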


\noindent
The torus $T_1 \times \ldots \times T_{n-1}$ is a product of the maximal tori in the 
groups $K_i.$  A general summand of  $\otimes_{i =1}^n R(\phi_i)$ is of the form

\begin{equation}
Hom_{H}(V(\lambda_0), V(\eta_0)) \otimes Hom_{K_1}(V(\lambda_1), V(\eta_1))\otimes \ldots \otimes Hom_{K_{n-1}}(V(\lambda_{n-1}), V(\eta_{n-1})),\\
\end{equation}

\noindent
and this algebra has an action of $T_1^2 \times \ldots \times T_{n-1}^2,$  where $T_i^2$ acts with weight $(\eta_{i-1}, \lambda_i).$
We take invariants by the action by the anti-diagonal subtorus $T_1 \times \ldots \times T_{n-1} \subset T_1^2 \times \ldots \times T_{n-1}^2,$ 
so the components in the invariant algebra satisfy $\eta_{i-1} = \lambda_i.$

To each such factorization we will then associate a rational polyhedral cone $B(\vec{\phi}).$ 
We take the product $\Delta_H^* \times \ldots \times \Delta_G^*$ of the dual Weyl-chambers of the groups, in other words the Weyl chambers of their respective Langlands dual groups.   These elements can be realized as linear functionals on the Weyl chambers $\Delta_H \times \ldots \times \Delta_G,$ so we define $B(\vec{\phi})$ as the quotient of $\Delta_H^* \times \ldots \times \Delta_G^*$ by the relation which identifies two elements having the same values on $C(\vec{\phi}).$  The following will be proved in Section \ref{branch}.

\begin{proposition}\label{valcon}
There is a map $f_{\vec{\phi}}: B(\vec{\phi}) \to \mathbb{V}_{\trop}(R(\phi)).$
\end{proposition}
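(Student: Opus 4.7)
The plan is to associate to each point $\ell \in B(\vec{\phi})$ a valuation $v_\ell$ on $R(\phi)$, trivial on $\mathbb{C}$, using the dominant-weight filtration from Theorem \ref{branchfiltration}. Any representative of $\ell$ is a linear functional on the weight lattice $\Delta_H \times \ldots \times \Delta_G$ that is nonnegative on the dominant cone, and I would use this functional to measure the size of $f \in R(\phi)$ through its decomposition $f = \sum_{\vec{\tau}} f_{\vec{\tau}}$ coming from $R(\phi) = \bigoplus_{\vec{\tau}} W(\vec{\phi}, \vec{\tau})$.

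Concretely, I would fix a lift $\tilde{\ell} \in \Delta_H^* \times \ldots \times \Delta_G^*$, let $\mathrm{supp}(f) \subset C(\vec{\phi})$ denote the finite set of $\vec{\tau}$ with $f_{\vec{\tau}} \neq 0$, and set
\begin{equation*}
v_\ell(f) \;=\; \min\{\, \tilde{\ell}(\vec{\tau}) : \vec{\tau} \in \mathrm{supp}(f)\,\},
\end{equation*}
with sign conventions chosen so that the partial order in $W(\vec{\phi},\vec{\lambda}) W(\vec{\phi},\vec{\eta}) \subset \bigoplus_{\vec{\beta}\leq \vec{\lambda}+\vec{\eta}} W(\vec{\phi},\vec{\beta})$ forces $\tilde{\ell}(\vec{\beta}) \geq \tilde{\ell}(\vec{\lambda}) + \tilde{\ell}(\vec{\eta})$ for every $\vec{\beta}$ appearing on the right --- this is precisely why one works in the dual Weyl chamber. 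The ultrametric inequality $v_\ell(f+g) \geq \min(v_\ell(f), v_\ell(g))$ is then immediate, the triviality $v_\ell(c) = 0$ for $c \in \mathbb{C}^\times$ follows because constants lie in $W(\vec{\phi}, \vec{0})$ and $\tilde{\ell}(\vec{0}) = 0$, and the bound $v_\ell(fg) \geq v_\ell(f) + v_\ell(g)$ drops straight out of the filtration.

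The main obstacle is the reverse inequality $v_\ell(fg) \leq v_\ell(f) + v_\ell(g)$, which amounts to showing that the products of the extremal-$\tilde{\ell}$ components of $f$ and $g$ do not cancel. Here I would invoke the associated graded description $\gr R(\phi) \cong [\bigotimes_i R(\phi_i)]^{T_1\times\ldots\times T_{n-1}}$ from Theorem \ref{branchfiltration}: each $R(\phi_i)$ is an integral domain, being a subring of the coordinate ring of a connected reductive group; tensor products of such domains over $\mathbb{C}$ remain domains; and taking invariants by a connected torus preserves the domain property. Non-vanishing of products of leading components in this associated graded domain then yields the required equality. Finally, $v_\ell$ depends only on the class of $\tilde{\ell}$ in $B(\vec{\phi})$ because $\mathrm{supp}(f) \subset C(\vec{\phi})$ always, so two lifts agreeing on $C(\vec{\phi})$ yield identical valuations. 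Assembling these steps produces the sought-after map $f_{\vec{\phi}}: B(\vec{\phi}) \to \mathbb{V}_{\trop}(R(\phi))$.
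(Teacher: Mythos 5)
Your proposal is correct and takes essentially the same route as the paper: the paper likewise feeds the branching filtration of Theorem \ref{branchfiltration} and the order-compatibility of dual-Weyl-chamber functionals into its general Proposition \ref{pval}, with the domain property of the associated graded algebra $[\bigotimes_i R(\phi_i)]^{T_1\times\cdots\times T_{n-1}}$ supplying multiplicativity and the fact that the weight support of any element lies in $C(\vec{\phi})$ giving well-definedness on $B(\vec{\phi})$. The only cosmetic difference is your min/sign-flipped convention versus the paper's max convention $v(a+b)\leq v(a)\oplus v(b)$.
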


We construct a complex out of the cones $B(\vec{\phi})$
as $\vec{\phi}$ runs over all factorizations of $\phi$ in the category of reductive groups.

\begin{definition}
Let $\mathcal{K}_{\phi}$ be the simplicial set defined as follows.  The $k$-simplices are defined as length-$k$ factorizations $\phi_k \circ \ldots \circ \phi_1 = \phi$ of the morphism $\phi.$  The face and degeneracy maps are defined by the following operations on chains of morphisms. 

\begin{equation}
d_k^i(\phi_1, \ldots, \phi_i, \phi_{i+1}, \ldots, \phi_k) = (\phi_1, \ldots, \phi_{i+1} \circ \phi_i, \ldots, \phi_k)\\
\end{equation}

\begin{equation}
s_k^i(\phi_1, \ldots, \phi_i, \phi_{i+1}, \ldots, \phi_k) = (\phi_1, \ldots, \phi_i, id, \phi_{i+1}, \ldots \phi_k)\\
\end{equation}
\end{definition}

To make this well-defined, we can take some small category in the category of reductive groups which contains all such factorizations up to equivalence. The cones $B(\vec{\phi})$ carry maps which correspond to the degeneracy maps in the above simplicial set by setting the appropriate coordinate to be $0,$ and the face maps are realized by adding the two adjacent coordinates which come from the same group.

\begin{equation}
(d_k^i)^*:B(\phi_1, \ldots, \phi_i \circ \phi_{i-1}, \ldots \phi_k) \to B(\phi_1, \ldots, \phi_k)\\ 
\end{equation}

\begin{equation}
(d_k^i)^*: (\rho_0, \ldots, \rho_k) \to (\rho_0, \ldots, 0, \ldots, \rho_k)\\
\end{equation}

\begin{equation}
(s_k^i)^*: B(\phi_1, \ldots, \phi_i, id, \phi_{i+1}, \ldots, \phi_k) \to  B(\phi_1, \ldots, \phi_i, \phi_{i+1}, \ldots, \phi_k)\\ 
\end{equation}

\begin{equation}
(s_k^i)^*:(\rho_0, \ldots,\rho_i, \rho_{i+1},\ldots, \rho_k) \to (\rho_0, \ldots, \rho_i + \rho_{i+1}, \ldots, \rho_k)\\
\end{equation}

\noindent
We can glue the $B(\vec{\phi})$ together along these maps by taking a colimit to obtain a polyhedral model of $\mathcal{K}_{\phi},$ which we call $K_{\phi}.$  These gluing maps will 
be shown to be compatible with the maps $f_{\vec{\phi}}.$

\begin{theorem}\label{valcom}
There is a map $F_{\phi}: K_{\phi} \to \mathbb{V}_{\trop}(R(\phi)).$
\end{theorem}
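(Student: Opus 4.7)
The plan is to invoke the universal property of the colimit that defines $K_{\phi}$. Since $K_{\phi}$ is built as the colimit of the diagram whose objects are the cones $B(\vec{\phi})$ and whose morphisms are the structure maps $(d_k^i)^{*}$ and $(s_k^i)^{*}$, producing a map $F_{\phi}: K_{\phi} \to \mathbb{V}_{\trop}(R(\phi))$ amounts to assembling a cocone: a family $\{f_{\vec{\phi}}\}_{\vec{\phi}}$ of maps $B(\vec{\phi}) \to \mathbb{V}_{\trop}(R(\phi))$ satisfying, for every factorization $\vec{\phi}$ and every pair $(k,i)$, the compatibility identities
\begin{equation*}
f_{\vec{\phi}} \circ (d_k^i)^{*} \;=\; f_{d_k^i(\vec{\phi})}, \qquad f_{\vec{\phi}} \circ (s_k^i)^{*} \;=\; f_{s_k^i(\vec{\phi})}.
\end{equation*}
Proposition~\ref{valcon} already supplies the maps $f_{\vec{\phi}}$, so the theorem reduces to checking these two identities on points; the map $F_{\phi}$ is then uniquely determined by the universal property.

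For the face-map identity, $d_k^i(\vec{\phi}) = (\phi_1, \ldots, \phi_{i+1}\circ\phi_i, \ldots, \phi_k)$ is obtained from $\vec{\phi}$ by discarding the intermediate group $K_i$, so the product of dual Weyl chambers defining $B(d_k^i(\vec{\phi}))$ omits the factor $\Delta_{K_i}^{*}$. The structure map $(d_k^i)^{*}$ re-embeds this product into the full dual Weyl-chamber product by inserting a $0$ in the $K_i$-slot. Since $f_{\vec{\phi}}(\rho)$ is computed by evaluating $\rho \in \Delta_H^{*} \times \cdots \times \Delta_G^{*}$ on the dominant-weight labels attached to the basis pieces $W(\vec{\phi}, \vec{\tau})$, a $0$ in the $K_i$-coordinate produces the same numerical value as ignoring the $K_i$-weight altogether, which is precisely the recipe for $f_{d_k^i(\vec{\phi})}$.

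For the degeneracy-map identity, inserting an identity morphism at position $i$ duplicates the group $K_i$, and in the enriched fiber product cone $C(s_k^i(\vec{\phi}))$ the two $K_i$-weights are forced to coincide because the identity only intertwines equal representations. The structure map $(s_k^i)^{*}$ replaces the pair $(\rho_i, \rho_{i+1})$ by its sum. Evaluating $(\rho_0, \ldots, \rho_i, \rho_{i+1}, \ldots, \rho_k)$ on a weight tuple whose $i$-th and $(i+1)$-th entries agree returns $(\rho_i + \rho_{i+1})$ times that common weight, which is exactly what $f_{\vec{\phi}}$ computes on the summed coordinate.

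The main obstacle will be the preceding step: one must trace the construction of $f_{\vec{\phi}}$ used in the proof of Proposition~\ref{valcon} (Section~\ref{branch}) carefully enough to confirm that the valuation on $R(\phi)$ is indeed read off from the dominant-weight labels of the pieces $W(\vec{\phi}, \vec{\tau})$ by a linear pairing with $\rho$, rather than by some finer filtration datum that could interact nontrivially with the zero-insertion or sum operations on cone coordinates. The filtration perspective supplied by Theorem~\ref{branchfiltration} is what guarantees this linearity, since the valuations produced from $B(\vec{\phi})$ are precisely those adapted to the tensor-factor grading of the associated graded algebra $[\otimes_{i=1}^n R(\phi_i)]^{T_1\times\cdots\times T_{n-1}}$. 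Once this transparent linear description is in hand, the two identities above are immediate, and the universal property of the colimit yields $F_{\phi}$.
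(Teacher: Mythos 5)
Your proposal follows essentially the same route as the paper: reduce to the cocone compatibilities $f_{\vec{\phi}} \circ (d_k^i)^{*} = f_{d_k^i(\vec{\phi})}$ and $f_{\vec{\phi}} \circ (s_k^i)^{*} = f_{s_k^i(\vec{\phi})}$, verify them by evaluating coweights on the dominant-weight labels of the spaces $W(\vec{\phi},\vec{\lambda})$ (zero insertion for the face map, forced equality of adjacent weights and summed coweights for the degeneracy map), and then invoke the colimit defining $K_{\phi}$. The paper additionally spells out the well-definedness of $(d_k^i)^{*}$ and $(s_k^i)^{*}$ on the quotient cones $B(\vec{\phi})$ via the dual maps on branching cones, but this is the same linearity observation you flag, so your argument is correct and matches the paper's.
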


In particular, every tropical variety $tr(I)$ associated to a presentation of the ring $R(\phi)$ has a subset which is the image of the polyhedral complex $K_{\phi}.$  In this way, the structure of the category of reductive groups informs the tropical geometry of branching algebras.

We now describe special factorizations of two types of morphisms, a diagonal $\delta_n: G \to G^n$ and the inclusion of a Levi subgroup $i_L: L \to G.$  The degenerations defined by these special factorizations can be completed to toric degenerations.   As a vector space, the branching algebra $R(\delta_n)$ is a direct sum of the invariants in all $n+1$-fold tensor products of representations of $G,$ for this reason we call it the full tensor algebra of $G.$
 
\begin{equation}
R(\delta_n) = \bigoplus_{\vec{\lambda} \in \Delta^{n+1}} Hom_G(V(\lambda_0), V(\lambda_1) \otimes \ldots \otimes V(\lambda_n)) = \bigoplus_{\vec{\lambda} \in \Delta^{n+1}} [V(\lambda_0^*) \otimes \ldots \otimes V(\lambda_n)]^G\\
\end{equation}

\noindent
We make a special application of the branching construction described above to reduce the study of $R(\delta_n)$ to $R(\delta_2)$ for any $n.$  

\begin{definition}
Define an oriented $n$-tree to be a $\tree$ with $n+1$ leaves labeled $0, \ldots, n,$ where
each edge is directed in such a way that each non-leaf vertex has exactly one in-edge, 
the $0$ leaf is a source, and all other leaves are sinks.     
\end{definition}

\noindent 
Note that the orientation is entirely defined by the labeling $\{0, \ldots, n\}.$  To each oriented $n$-tree we define a factorization of $\delta_n$ by assigning each vertex
$v \in V(\tree)$ the morphism $\delta_{val(v)-1}:G \to G^{val(v) -1},$ where $val(v)$ is the valence of $v$.

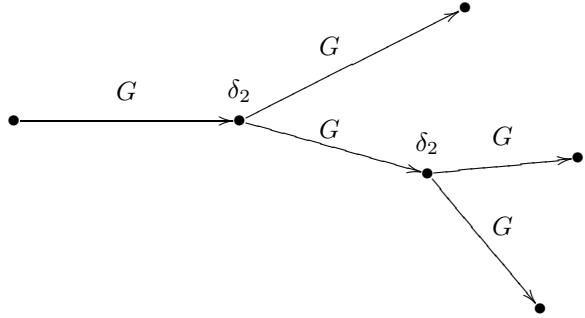
\begin{figure}[htbp]
\centering
\begin{xy}
(-15, 4)*{G};
(12, 10)*{G};
(35, -2)*{G};
(35, -14)*{G};
(12, -1)*{G};
(0,4)*{\delta_2};
(25, -3 )*{\delta_2};
(0,0)*{\bullet} = "AA";
(30,15)*{\bullet} = "BB";
(45,-5)*{\bullet} = "CC";
(40,-25)*{\bullet} = "DD";
(-30,0)*{\bullet} = "EE";
(25,-7)*{\bullet} = "FF";
"BB"; "AA";**\dir{-}? >* \dir{>};
"AA"; "EE";**\dir{-}? >* \dir{>};
"FF"; "AA";**\dir{-}? >* \dir{>};
"DD"; "FF";**\dir{-}? >* \dir{>};
"CC"; "FF";**\dir{-}? >* \dir{>};
\end{xy}
\caption{A factorization, $\delta_3 = [id \times \delta_2]\circ \delta_2.$}
\end{figure}

The branchings defined by this tree construction define a connected subcomplex of $K_{\delta_n},$ which we call $D_n(G).$
The factorization diagrams corresponding to $\tree$ are labellings of the edges $E(\tree)$ of $\tree$ by dominant $G$-weights, and the vertices $V(\tree)$ of $\tree$ by $G$-tensors. By Theorem \ref{branchfiltration} above, this results in a filtration on $R(\delta_n)$ with associated graded algebra a subalgebra of torus invariants in $\bigotimes_{v \in V(\tree)} R(\delta_{val(v)-1}).$   If $\tree$ is trivalent, we obtain a subalgebra of of $[R(\delta_2)]^{\otimes n-2}.$ 

\begin{remark}
The complex $D_n(SL_2(\C))$ can be recognized as the space of phylogenetic trees defined by Billera, Holmes and Vogtman in \cite{BHV} to give a geometric context to phylogenetic algorithms from mathematical biology.
\end{remark}

There is a distinguished class of factorizations we can use on the map defined by the inclusion of a Levi subgroup $i_L: L \to G$ as well, namely factorizations by other Levi subgroups.  For any chain of Levi subgroups 

$$
\begin{CD}
L @>i_{L, L_1}>> L_1 @>i_{L_1, L_2}>> \ldots @>i_{L{m-1}, L_m} >> L_m @>i_{L_m, G}>> G\\
\end{CD}
$$

\noindent
we obtain a cone of valuations $B(\vec{i}_L),$ with a flat degeneration of
$R(i_{L, G})$ to  $[R(i_{L, L_1}) \otimes \ldots \otimes R(i_{L_k, G})]^{T_1 \times \ldots \times T_m}.$  Each such cone can be represented combinatorially by a nesting of the Dynkin diagrams defining the $L_i.$  

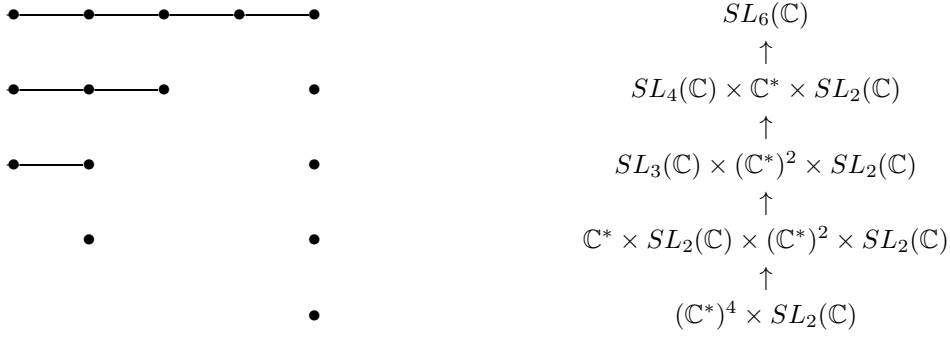
\begin{figure}[htbp]
\centering
\begin{xy}
(100, 0)*{SL_6(\C)};
(100, -10)*{SL_4(\C)\times \C^* \times SL_2(\C)};
(100, -20)*{SL_3(\C)\times (\C^*)^2 \times SL_2(\C)};
(100, -30)*{\C^*\times SL_2(\C) \times (\C^*)^2 \times SL_2(\C)};
(100, -40)*{(\C^*)^4\times SL_2(\C)};
(100, -5)*{\uparrow};
(100, -15)*{\uparrow};
(100, -25)*{\uparrow};
(100, -35)*{\uparrow};
(0,0)*{\bullet} = "A";
(10,0)*{\bullet} = "B";
(20,0)*{\bullet} = "C";
(30,0)*{\bullet} = "D";
(40,0)*{\bullet} = "E";
"A"; "B";**\dir{-}? >* \dir{-};
"B"; "C";**\dir{-}? >* \dir{-};
"C"; "D";**\dir{-}? >* \dir{-};
"D"; "E";**\dir{-}? >* \dir{-};
(0,-10)*{\bullet} = "A1";
(10,-10)*{\bullet} = "B1";
(20,-10)*{\bullet} = "C1";
(40,-10)*{\bullet} = "E1";
"A1"; "B1";**\dir{-}? >* \dir{-};
"B1"; "C1";**\dir{-}? >* \dir{-};
(0,-20)*{\bullet} = "A2";
(10,-20)*{\bullet} = "B2";
(40,-20)*{\bullet} = "E2";
"A2"; "B2";**\dir{-}? >* \dir{-};
(10,-30)*{\bullet} = "B3";
(40,-30)*{\bullet} = "E3";
(40,-40)*{\bullet} = "E4";
\end{xy}
\caption{A chain of sub-Dynkin diagrams of type A}
\end{figure}

\noindent
We define $H_L(G) \subset K_{i_{L,G}}$ to be the connected subcomplex defined by all chains of
Levi subgroups which begin with $L$ for a fixed choice of maximal torus.  Just as exploring $D_n(G)$ for $G = SL_m(\C)$ has lead to some interesting combinatorics (see \cite{M3}, \cite{SpSt}), we anticipate that the study of $H_L(G)$ for various $L$ and $G$ to yield interesting combinatorial structures as well.

\subsection{The dual canonical basis and branching algebras}

Factorization diagrams provide a convenient combinatorial filtration of general
branching algebras, but the associated graded algebras of these filtrations are not
always monoidal.  Now we describe how to use the dual canonical basis (crystal basis) of Lusztig to complete the branching degenerations associated to 
the valuations in $D_n(G)$ and $H_L(G)$  to toric degenerations.

 A full solution to a branching problem would be a cone $D \to C(\vec{\phi})$  where each element of a basis of branching maps $f: V(\beta) \to V(\lambda), (\beta, \lambda) \in \Delta_H \times \Delta_G$ is assigned a unique element of $D$.  Ideally, one wants

\begin{enumerate}
\item A finitely generated monoid $S(\phi)$\\
\item A map $\pi: S(\phi) \to \Delta_H \times \Delta_G$ which recovers $C(\phi)$, such that the number of points
in the fiber over $(\lambda, \eta)$ is $dim[Hom_H(V(\lambda), V(\eta))].$\\
\item An algebraic relationship between $S(\phi)$ and $R(\phi).$
\end{enumerate}

\noindent
We will use the convex polyhedral descriptions of branching for 
inclusion of Levi subgroup $i_{L,G}:L \subset G$ and the diagonal map
$\delta_2:G \to G^2$ given by Berenstein and Zelevinsky in \cite{BZ1}, \cite{BZ2}
to give a variety of solutions to $1$, $2$, and $3$ above in these cases.\

The dual canonical basis $B \subset R_G$ has a parametrization by $N-$tuples of non-negative integers for each  reduced decomposition $\bold{i} = (i_1, \ldots, i_N)$  of the longest word of the Weyl group $w_0 \in W$ of $G$.

\begin{equation}
w_0 = s_{\alpha_{i_1}}\ldots s_{\alpha_{i_N}}\\
\end{equation}

\noindent
This defines an injective map $B \to \Delta \times \Z_{\geq 0}^N,$
and a partial ordering on $B,$ where $b_{\lambda, \vec{s}} > b_{\eta, \vec{t}}$
if $\lambda > \eta$ as dominant weights, or $\lambda = \eta$ and $\vec{s} > \vec{t}$
lexicographically.  The image of this map is the set of integer points in a convex cone $C(\bold{i}) \subset \Delta \times \Z_{\geq 0}^N.$  Multiplication in $R_G$ with respect to the basis $B$ was shown to be lower-triangular by Caldero \cite{C},
see also \cite{K}. 

\begin{equation}
b_{\lambda, \vec{s}} \times b_{\eta, \vec{t}} = b_{\lambda + \eta, \vec{s} + \vec{t}} + \sum_{\vec{\ell} < \vec{s}} C_{\lambda + \eta, \vec{\ell}} b_{\lambda + \eta, \vec{\ell}}\\
\end{equation}

\noindent
This is similar to the filtration defined by factorization diagrams.   We will employ an observation we believe to be essentially due to Zhelobenko \cite{Zh} to realize
the branching algebras $R(\delta_2)$ and $R(i_{L,G})$ as subalgebras of $R_G\otimes \C[T]$ and $R_G$ respectively in order to show the following. 

\begin{theorem}\label{basisinherit}
The algebras $R(\delta_2)$ and $R(i_{L,G})$ inherit the dual canonical basis
along with its lower triangular multiplication property from $R_G.$
\end{theorem}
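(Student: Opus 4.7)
The plan is, for each branching algebra, to exhibit a preferred subset of the dual canonical basis $B \subset R_G$ (or of $B$ tensored with monomials) which forms a vector space basis of the branching algebra, and to derive its lower triangular multiplication by restricting Caldero's expansion in $R_G$.

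For $R(i_{L,G}) = R_G^{U_L}$, which already sits as a subalgebra of $R_G$, I would choose a reduced decomposition $\mathbf{i} = (\mathbf{i}_L, \mathbf{i}')$ of $w_0 \in W$ whose initial segment $\mathbf{i}_L$ is a reduced decomposition of the longest element $w_0^L$ of the Weyl group of $L$. Under the Berenstein--Zelevinsky string parametrization attached to $\mathbf{i}$, the $U_L$-invariant dual canonical basis elements $b_{\lambda, \vec{s}} \in V(\lambda)$ are characterized by the vanishing of the coordinates of $\vec{s}$ indexed by $\mathbf{i}_L$; this is the Levi branching description of \cite{BZ1} combined with the compatibility of the Lusztig--Kashiwara crystal structure with parabolic restriction. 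Call this subset $B_L \subset B$. Then $B_L$ spans $R_G^{U_L}$, and lower triangularity is inherited by the following subspace argument: in the expansion
\begin{equation}
b_{\lambda, \vec{s}} \cdot b_{\eta, \vec{t}} = b_{\lambda+\eta, \vec{s}+\vec{t}} + \sum_{\vec{\ell} < \vec{s}+\vec{t}} C_{\lambda+\eta, \vec{\ell}} \, b_{\lambda+\eta, \vec{\ell}}
\end{equation}
the left-hand side is $U_L$-invariant when both factors lie in $B_L$; since $B$ is a basis of $R_G$, every $b_{\lambda+\eta, \vec{\ell}}$ with nonzero coefficient is itself $U_L$-invariant, hence belongs to $B_L$.

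For $R(\delta_2) = [R_G \otimes R_G]^G$, the plan is to use the Zhelobenko observation of \cite{Zh} to realize
\begin{equation}
R(\delta_2) \hookrightarrow R_G \otimes \C[T]
\end{equation}
by sending a $G$-intertwiner $V(\mu) \to V(\lambda) \otimes V(\eta)$ to a $U^+$-highest weight vector of weight $\mu$ in $V(\lambda)$ tensored with the character $t^\eta$ recording the second factor's weight. Pulling back $B$ together with the monomial basis of $\C[T]$ gives a basis of $R_G \otimes \C[T]$ which is lower triangular with respect to the product of the canonical-basis order and the weight order. The image of $R(\delta_2)$ is then spanned by the elements $b_{\lambda, \vec{s}} \otimes t^\eta$ whose coordinates satisfy the polyhedral inequalities of Berenstein--Zelevinsky \cite{BZ2} for tensor product multiplicities, giving a preferred sub-basis $B_\delta$ of $R(\delta_2)$. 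The same subspace argument as in the Levi case then shows that multiplication on $B_\delta$ is lower triangular.

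The main obstacle is the precise compatibility of the dual canonical basis with each of the two embeddings: one must verify that the polyhedral cones of \cite{BZ1} and \cite{BZ2} for branching multiplicities cut out exactly the subsets $B_L$ and $B_\delta$ inside $B$ (respectively $B$ times monomials in $\C[T]$). This reduces to the crystal-theoretic behavior of the dual canonical basis under parabolic restriction and under the tensor product operation, which is what ties together the Lusztig--Kashiwara construction with the Berenstein--Zelevinsky polyhedral rules.
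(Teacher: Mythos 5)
Your proposal follows essentially the same route as the paper: realize $R(i_{L,G})$ as $R_G^{U_L}\subset R_G$ and $R(\delta_2)$ inside $R_G\otimes\C[T]$ via Zhelobenko's observation, invoke the good-basis property together with the Berenstein--Zelevinsky polyhedral description of which dual canonical basis elements span the subspaces $V_{I,\eta}(\lambda)$ and $V_{\mu-\beta,\beta}(\lambda)$, and then inherit lower triangularity by the subspace argument, exactly as in the paper's proof. The only slip is in your description of the Zhelobenko map: the image of an intertwiner $V(\mu)\to V(\lambda)\otimes V(\eta)$ is not a $U^+$-highest weight vector of weight $\mu$ in $V(\lambda)$, but rather a vector of weight $\mu-\eta$ annihilated by $e_i^{H_i(\eta)+1}$ (an element of $V_{\mu-\eta,\eta}(\lambda)$) tensored with the character $t^{\eta}$ --- which is precisely the subspace whose dual canonical basis members the Berenstein--Zelevinsky tensor-multiplicity inequalities cut out, so your subsequent steps go through unchanged.
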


Using the work of Berenstein and Zelevinsky \cite{BZ3}, we observe that the
the bases of $R(\delta_2)$ and $R(i_{L,G})$ both come with natural labelings by the
integer points in convex rational cones $C_3(\bold{i})$ and $C_L(\bold{i}),$ respectively, one
for each choice of string parameter $\bold{i}.$  

\begin{corollary}
The algebras $R(\delta_2)$ and $R(i_{L,G})$ carry flat degenerations $\C[C_3(\bold{i})]$
and $\C[C_L(\bold{i})]$ respectively. 
\end{corollary}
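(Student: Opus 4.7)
The plan is to combine Theorem \ref{basisinherit} with the Berenstein--Zelevinsky string parametrization to produce a valuation on each branching algebra whose associated graded recovers the semigroup algebra of the appropriate cone, then invoke the standard Rees algebra construction to turn this into a flat one-parameter degeneration.

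First I would unpack the data from Theorem \ref{basisinherit}: a dual canonical basis $B(\delta_2) \subset R(\delta_2)$ (resp.\ $B(L) \subset R(i_{L,G})$) whose multiplication is lower-triangular with respect to the partial order inherited from $R_G$. By \cite{BZ3}, once a reduced word $\bold{i}$ is fixed, the string parametrization identifies $B(\delta_2)$ (resp.\ $B(L)$) with the integer points of the cone $C_3(\bold{i})$ (resp.\ $C_L(\bold{i})$); call this bijection $\psi_{\bold{i}}$. The inherited lower-triangular multiplication then takes the form
\begin{equation}
b_\sigma \cdot b_\tau = b_{\sigma+\tau} + \sum_{\chi < \sigma+\tau} c_\chi \, b_\chi,
\end{equation}
where $<$ is the lexicographic refinement of the weight order used to define the cone.

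Second, I would define a filtration $F_{\leq \chi}$ on $R(\delta_2)$ (resp.\ $R(i_{L,G})$) whose $\chi$-piece is the span of all basis elements $b$ with $\psi_{\bold{i}}(b) \leq \chi$. The displayed multiplication rule shows $F_{\leq \sigma}\cdot F_{\leq \tau}\subseteq F_{\leq \sigma+\tau}$, so the associated graded $\gr R(\delta_2)$ is a graded algebra whose pieces are one-dimensional and indexed by $C_3(\bold{i})\cap \Z^N$. The leading-term rule $\bar b_\sigma\cdot \bar b_\tau = \bar b_{\sigma+\tau}$ then identifies $\gr R(\delta_2)$ with the affine semigroup algebra $\C[C_3(\bold{i})]$; the same construction applied to $R(i_{L,G})$ yields $\gr R(i_{L,G}) = \C[C_L(\bold{i})]$.

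Third, to turn this into a bona fide flat family I would choose a linear functional $\Z^N\to \Z$ strictly positive on the cone, collapse the $\Z^N$-filtration to a $\Z$-filtration, and form the associated Rees algebra $\tR\subseteq R(\delta_2)[t]$. Standard arguments show $\tR$ is flat over $\C[t]$, the generic fiber is $R(\delta_2)$, and the special fiber at $t=0$ is $\gr R(\delta_2) = \C[C_3(\bold{i})]$, producing the toric degeneration; the Levi case is identical.

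The main obstacle is the step identifying $\gr$ with the full semigroup algebra of the cone: one must check that $\psi_{\bold{i}}$ applied to the inherited basis really does surject onto $C_3(\bold{i})\cap \Z^N$ (rather than a non-saturated sub-semigroup) and that the leading-term multiplication is in fact addition in that semigroup. This is where the polyhedral descriptions of branching from \cite{BZ1, BZ2, BZ3} do the essential combinatorial work, and once that identification is in place the degeneration itself is formal.
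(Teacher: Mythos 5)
Your overall route is the same as the paper's: inherit the dual canonical basis with its lower-triangular multiplication (Theorem \ref{basisinherit}), note that the basis is indexed by the lattice points of $C_3(\bold{i})$ resp.\ $C_L(\bold{i})$ via the Berenstein--Zelevinsky inequalities, filter by the partially ordered monoid so that the associated graded algebra has one-dimensional pieces with leading-term multiplication given by addition in the cone, and then pass through a Rees algebra to get flatness. The concern you single out as the main obstacle --- whether the parametrization surjects onto the lattice points of the cone --- is not actually where the work lies: the paper defines $C_3(\bold{i})$ and $C_L(\bold{i})$ precisely by the BZ inequalities, and the Zhelobenko embedding of $R(\delta_2)$ into $R_G\otimes\C[T]$ (resp.\ $R(i_{L,G})=R_G^{U_L}\subset R_G$) together with Theorems \ref{BZ3} and \ref{BZL} identifies the inherited basis with exactly those lattice points.

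The genuine gap is in your third step, where you ``choose a linear functional $\Z^N\to\Z$ strictly positive on the cone'' and collapse to a $\Z$-filtration. Positivity on the cone is not the right condition and by itself the step fails: the partial order here refines the weight order by the \emph{lexicographic} order on string parameters, and lex order on $\Z_{\geq 0}^N$ is not induced by any single linear functional. Concretely, a product $b_\sigma\cdot b_\tau$ has tail terms $b_\chi$ with $\chi$ lex-smaller than $\sigma+\tau$ but lying in the same multigraded piece, and a functional that is merely positive on the cone can easily assign such a $\chi$ a strictly larger value than $\sigma+\tau$ (e.g.\ $(0,5)$ versus $(1,0)$ under the functional $(1,1)$), in which case the collapsed $\Z$-filtration has the wrong associated graded algebra. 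What is needed is a functional that weights the leading term of each member of a finite lifted generating set of relations strictly above all of its tail terms --- this is the cone $D(X,\Gamma)$ of Proposition \ref{pval2}, and its nonemptiness is exactly what the paper supplies by the argument of Proposition 2.2 of \cite{AB}, using that the filtration is by a partially ordered rational cone with finite relevant down-sets. Once such an $e\in D(X,\Gamma)$ with non-negative integer values is in hand, Proposition \ref{reese} gives the flat degeneration to $\C[C_3(\bold{i})]$ and $\C[C_L(\bold{i})]$ as you describe; so your argument is repairable, but the repair is precisely the step you elided.
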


In the examples we explain a particular nice instance of a cone $C_3(\bold{i})$ in type $A.$
These degenerations allow us to complete the branching degenerations of $R(\delta_n)$ and
$R(i_{L,G})$ defined above to toric degenerations.

\begin{theorem}\label{diagtoric}
 The full tensor algebra $R(\delta_n)$ has a toric degeneration for each choice of the following objects.

\begin{enumerate}
\item A trivalent, oriented $n$-tree, $\tree$.\\
\item An ordering of the internal vertices $v \in V(\tree)$\\
\item An assignment $\bold{i}_v$ of strings to internal vertices of $\tree.$
\end{enumerate} 
\end{theorem}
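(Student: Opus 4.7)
The plan is to cascade the two flat degenerations already built in the paper. First, the oriented trivalent tree $\tree$ provides a canonical factorization of $\delta_n$, in which each internal vertex $v\in V(\tree)$ contributes a copy of $\delta_{val(v)-1}=\delta_2$. Applying Theorem \ref{branchfiltration} to this factorization yields a flat degeneration
\[
R(\delta_n)\;\rightsquigarrow\;A \;:=\;\Bigl[\bigotimes_{v\in V(\tree)} R(\delta_2)\Bigr]^{T},
\]
where $T$ is the product of the maximal tori attached to the internal edges of $\tree$, acting by matching weights on the two ends of each internal edge.

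Second, for each internal vertex $v$, the corollary following Theorem \ref{basisinherit} provides a flat toric degeneration $R(\delta_2)\rightsquigarrow\C[C_3(\bold{i}_v)]$, one for each choice of string parameter $\bold{i}_v$. The crucial property is equivariance: the dual canonical basis of $R(\delta_2)$ consists of vectors homogeneous with respect to the full $\Delta\times\Delta\times\Delta$ grading, and the Caldero--Kashiwara triangular multiplication respects this grading, so each string degeneration commutes with the $T$-action from Step~1. Tensoring these degenerations together, one factor at a time in the prescribed order of internal vertices, builds a flat family whose generic fiber is $\bigotimes_v R(\delta_2)$ and whose special fiber is $\bigotimes_v \C[C_3(\bold{i}_v)]$. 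Passing to $T$-invariants (which is exact, since $T$ is reductive) yields a flat degeneration
\[
A\;\rightsquigarrow\;\Bigl[\bigotimes_{v\in V(\tree)}\C[C_3(\bold{i}_v)]\Bigr]^{T}\;=\;\C[S_{\tree}],
\]
where $S_{\tree}\subset\bigoplus_{v} C_3(\bold{i}_v)$ is the affine sub-semigroup carved out by the edge-matching weight conditions. Since $\C[S_{\tree}]$ is a semigroup algebra, the right-hand side is toric.

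Third, composing this with the branching degeneration of Step~1---for instance, by assembling all filtrations into a single multi-parameter Rees construction and specializing along the one-parameter direction induced by the chosen vertex ordering---produces a one-parameter flat family over $\C[t]$ with generic fiber $R(\delta_n)$ and special fiber $\C[S_{\tree}]$. This is the desired toric degeneration.

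The main technical hurdle is the commutation of taking $T$-invariants with the string degenerations, i.e., verifying that $\bigl[\bigotimes_v \C[C_3(\bold{i}_v)]\bigr]^{T}$ is genuinely the special fiber of the flat family obtained by string-degenerating $A$ factor by factor. This reduces to the assertion that each degeneration $R(\delta_2)\rightsquigarrow \C[C_3(\bold{i}_v)]$ preserves the full $\Delta^{3}$-weight grading, not merely the multiplicative structure; this is a direct consequence of the weight-homogeneity of the dual canonical basis together with the triangular form of its multiplication, which are exactly the properties supplied by Theorem \ref{basisinherit}. Once this equivariance is secured, reductivity of $T$ and standard Rees-algebra arguments finish the composition of flat families.
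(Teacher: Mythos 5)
Your proposal is correct in substance but is packaged differently from the paper's argument. You cascade two flat families: first the tree/branching degeneration $R(\delta_n)\rightsquigarrow[\bigotimes_v R(\delta_2)]^{T}$ from Theorem \ref{branchfiltration}, then vertex-by-vertex string degenerations $R(\delta_2)\rightsquigarrow\C[C_3(\bold{i}_v)]$, using $T$-equivariance (which you correctly reduce to weight-homogeneity of the dual canonical basis) and exactness of $T$-invariants, and finally you compose the families. The paper instead works in one shot: it observes that each filtered piece $W(\tree,\vec\lambda)$ of $R(\delta_n)$ acquires a basis $b_{\tree,\vec\lambda,\vec s}$ of factorization diagrams decorated by string parameters, indexed by the lattice points of the fiber-product cone $C_{\tree}(\bold{i})$, and that multiplication in $R(\delta_n)$ is lower triangular for the single combined partial order (dominant weights on the edges first, then the string parameters lexicographically, concatenated according to the chosen vertex ordering); then Propositions \ref{pval2} and \ref{reese} convert that one partial order directly into a valuation and a one-parameter flat toric degeneration to $\C[C_{\tree}(\bold{i})]$. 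The place where your write-up is thinnest is exactly the step the paper's machinery is designed to absorb: composing two flat degenerations into a single one-parameter family is not automatic, and your appeal to a ``multi-parameter Rees construction and specialization'' is where one must actually exhibit a single filtration (equivalently a single integral linear functional in $D(X,\Gamma)$, produced as in \cite{AB}) refining the branching order by the string orders --- note also that the chosen ordering of internal vertices enters precisely here, as the lexicographic ordering of the concatenated strings, a role your proof only gestures at. What your route buys is conceptual modularity (each degeneration is quoted as a black box, with equivariance as the only glue); what the paper's route buys is that no composition-of-families or invariants-commute-with-degeneration argument is needed, since everything is encoded in one lower-triangular basis and one application of the valuation/Rees formalism.
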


\noindent
The resulting rational cone $C_{\tree}(\bold{i})$ is a fiber product of cones $C_3(\bold{i}_v)$
over copies of $\Delta_G$ according to the topology of $\tree.$  An element of this cone is a labeling of $\tree$ by dominant weights on the edges of $\tree$ and compatible dual canonical basis elements on the trinodes.  The map which forgets all data except the labels on the leaves is a map on cones $C_{\tree}(\bold{i}) \to C(\delta_n)$ which enriches the tensor branching cone.

\begin{figure}[htbp]
\centering
\begin{xy}
(-15, 4)*{\lambda_0};
(12, 10)*{\lambda_1};
(38, -2)*{\lambda_2};
(35, -14)*{\lambda_3};
(12, -1)*{\eta};
(0,5)*{b_{\lambda_0, \lambda_1, \eta, \vec{s}}};
(25, -3 )*{b_{\eta, \lambda_2, \lambda_3, \vec{t}}};
(0,0)*{\bullet} = "AA";
(30,15)*{\bullet} = "BB";
(50,-5)*{\bullet} = "CC";
(40,-25)*{\bullet} = "DD";
(-30,0)*{\bullet} = "EE";
(25,-7)*{\bullet} = "FF";
"BB"; "AA";**\dir{-}? >* \dir{>};
"AA"; "EE";**\dir{-}? >* \dir{>};
"FF"; "AA";**\dir{-}? >* \dir{>};
"DD"; "FF";**\dir{-}? >* \dir{>};
"CC"; "FF";**\dir{-}? >* \dir{>};
\end{xy}
\caption{A member of $C_{\tree}(\bold{i})$}
\end{figure}

These degenerations are all $T^{n+1}$ invariant with respect to the action which grades $R(\delta_n)$ by the tuple of dominant weights in each tensor product, so we also get toric degenerations of every subalgebra $R(\delta_n, M)\subset R(\delta_n)$ given as the sum of the graded pieces for a submonoid $M \subset \Delta_G^{n+1}.$

\begin{equation}
R(\delta_n, M) = \bigoplus _{\vec{\lambda} \in M} Hom_G(V(\lambda_0), V(\lambda_1) \otimes \ldots \otimes V(\lambda_n))\\
\end{equation}

\noindent
In particular, we get a degeneration of the algebra corresponding to the monoid of non-negative integer multiples of a particular tuple of weights, 
$\Z_{\geq 0}\vec{\lambda} \subset \Delta_G^{n+1}.$  This algebra, $R(\delta_n, \vec{\lambda})$ is the projective coordinate ring associated to a line bundle
$\mathcal{L}(\vec{\lambda})$ on a configuration space of $G,$ defined as the following GIT quotient.

\begin{equation}
P_{\vec{\lambda}}(G) = G \ql [G/P(\lambda_0^*) \times \ldots \times G/P(\lambda_n)]\\
\end{equation}

\noindent
Here $P(\lambda_i)$ is the parabolic subgroup of $G$ corresponding to the face of $\Delta_G$ which contains $\lambda_i.$   

\begin{corollary}
Let $C_{\tree}(\bold{i}, \vec{\lambda})$ be the polytope obtained from $C_{\tree}(\bold{i})$ by fixing the leaf edge values at the weights $\lambda_0, \ldots, \lambda_n.$   The lattice points in $C_{\tree}(\bold{i}, \vec{\lambda})$ are in bijection with a basis of the space of global sections $H^0(P_{\vec{\lambda}}(G), \mathcal{L}(\vec{\lambda})).$
\end{corollary}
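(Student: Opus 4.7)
My plan is to obtain the corollary as a direct consequence of Theorem \ref{diagtoric} by restricting the toric degeneration of $R(\delta_n)$ to a single $T^{n+1}$-isotypic component and then reading off a basis from the lattice points of the resulting polytope.

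First I would identify the algebraic object whose degree-one piece is the space of sections in question. By the definition of $R(\delta_n, M)$ recalled just above the corollary, the algebra
\begin{equation}
R(\delta_n, \Z_{\geq 0}\vec{\lambda}) = \bigoplus_{k \geq 0} Hom_G(V(k\lambda_0), V(k\lambda_1) \otimes \ldots \otimes V(k\lambda_n))
\end{equation}
is the homogeneous coordinate ring of $P_{\vec{\lambda}}(G)$ for the line bundle $\mathcal{L}(\vec{\lambda})$, and its degree-one component is exactly $H^0(P_{\vec{\lambda}}(G), \mathcal{L}(\vec{\lambda}))$. Equivalently, this component is the $\vec{\lambda}$-weight space of $R(\delta_n)$ for the $T^{n+1}$-action that grades the full tensor algebra by the tuple of dominant weights on the $n+1$ leaves of $\tree$.

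Next I would invoke Theorem \ref{diagtoric}, which supplies a flat degeneration of $R(\delta_n)$ to the semigroup algebra $\C[C_{\tree}(\bold{i})]$. As observed in the discussion preceding the corollary, this degeneration is $T^{n+1}$-equivariant, since in the fiber product description of $C_{\tree}(\bold{i})$ as an iterated gluing of the Berenstein--Zelevinsky cones $C_3(\bold{i}_v)$ the leaf coordinates are genuine coordinate functions on the cone, and the map $C_{\tree}(\bold{i}) \to C(\delta_n)$ to the tensor branching cone simply reads them off. Hence the degeneration restricts to a flat degeneration of every $T^{n+1}$-weight subspace. Applied to the weight $\vec{\lambda}$, this yields a flat degeneration of $H^0(P_{\vec{\lambda}}(G), \mathcal{L}(\vec{\lambda}))$ to the vector space with basis indexed by those lattice points of $C_{\tree}(\bold{i})$ whose leaf coordinates equal $\vec{\lambda}$, that is, by the lattice points of $C_{\tree}(\bold{i}, \vec{\lambda})$.

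Finally, because a flat degeneration preserves dimensions of graded pieces, the two vector spaces have the same (finite) dimension, and lifting the monomial basis of $\C[C_{\tree}(\bold{i}, \vec{\lambda})]$ back along the filtration produces a basis of $H^0(P_{\vec{\lambda}}(G), \mathcal{L}(\vec{\lambda}))$ in bijection with the lattice points of $C_{\tree}(\bold{i}, \vec{\lambda})$. The main point to verify is that this slice really is a bounded polytope rather than an unbounded polyhedron; this follows because the internal edge weights of $\tree$ are constrained by the Littlewood--Richardson type inequalities built into each $C_3(\bold{i}_v)$, which force them into a bounded region once the leaf weights are fixed. I expect the only genuine bookkeeping to be the careful identification of the $T^{n+1}$-grading on $R(\delta_n)$ with the leaf-coordinate grading on $C_{\tree}(\bold{i})$; once that matching is in place, everything else is formal from Theorem \ref{diagtoric}.
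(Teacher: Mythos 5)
Your argument is correct and follows the same route the paper intends: it derives the corollary from the $T^{n+1}$-equivariance of the toric degeneration in Theorem \ref{diagtoric}, restricts to the subalgebra $R(\delta_n, \Z_{\geq 0}\vec{\lambda})$ identified as the projective coordinate ring of $P_{\vec{\lambda}}(G)$ with $\mathcal{L}(\vec{\lambda})$, and reads off the basis of the degree-one piece from the lattice points of the fixed-leaf slice $C_{\tree}(\bold{i}, \vec{\lambda})$. The boundedness and grading-matching points you flag are exactly the routine checks implicit in the paper's statement, so no gap remains.
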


\noindent
These spaces have been studied in detail for $G = SL_2(\C),$ where $G/P(\lambda)$ is always the projective line, see \cite{HMSV}, \cite{HMM}, \cite{M1}.   We also have a similar statement for the branching algebra $R(i_{L,G}).$

\begin{theorem}\label{levitoric}
 For any choices of the following information we construct a toric degeneration of $R(i_{L,G}).$ 
\begin{enumerate}
\item A chain of Levi subgroups $L \subset L_1 \subset \ldots \subset L_k\subset G$\\
\item A choice of string $\bold{i}_j$ for each Levi $L_j$\\
\end{enumerate} 

\end{theorem}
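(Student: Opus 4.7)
The plan is to iterate the same two-step strategy already used for Theorem \ref{diagtoric}: first use the chain of Levi subgroups to produce a branching-style flat degeneration via Theorem \ref{branchfiltration}, and then refine each factor to a toric algebra using the string-cone degeneration coming from the dual canonical basis (Theorem \ref{basisinherit} and its corollary). Finally, one composes the two flat degenerations and argues that the total degeneration is to a toric algebra.

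More concretely, the chain $L \subset L_1 \subset \ldots \subset L_k \subset G$ is precisely a factorization of $i_{L,G}$ in the category of reductive groups,
$$ i_{L,G} = i_{L_k, G} \circ i_{L_{k-1}, L_k} \circ \ldots \circ i_{L, L_1}. $$
Theorem \ref{branchfiltration} then produces a flat degeneration of $R(i_{L,G})$ to
$$ A \;=\; \bigl[ R(i_{L, L_1}) \otimes R(i_{L_1, L_2}) \otimes \cdots \otimes R(i_{L_k, G}) \bigr]^{T_1 \times \cdots \times T_k}, $$
where $T_j$ is the maximal torus of $L_j$ acting anti-diagonally as in the discussion following Theorem \ref{branchfiltration}. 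Each individual factor $R(i_{L_{j-1}, L_j})$ is again a Levi branching algebra, so by Theorem \ref{basisinherit} together with its corollary, the choice of string $\bold{i}_j$ yields a flat degeneration of $R(i_{L_{j-1}, L_j})$ to the affine semigroup algebra $\C[C_{L_{j-1}}(\bold{i}_j)]$. Tensor products preserve flatness over the affine line, so we obtain a flat toric degeneration
$$ \bigotimes_{j} R(i_{L_{j-1}, L_j}) \;\leadsto\; \bigotimes_{j} \C[C_{L_{j-1}}(\bold{i}_j)] \;=\; \C\bigl[C_{L_0}(\bold{i}_1) \oplus \cdots \oplus C_{L_{k-1}}(\bold{i}_k)\bigr]. $$

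The remaining point is that the $T_1 \times \cdots \times T_k$-action used to form $A$ is realized on each factor through the dominant weight grading, and the string cone $C_{L_{j-1}}(\bold{i}_j)$ is by construction fibered over $\Delta_{L_{j-1}} \times \Delta_{L_j}$, so the entire toric cone $\bigoplus_j C_{L_{j-1}}(\bold{i}_j)$ carries a natural linear map to the weight lattice of $T_1 \times \cdots \times T_k$. Because the dual canonical basis elements are weight vectors for this torus, the degeneration of $\otimes_j R(i_{L_{j-1}, L_j})$ to the toric algebra is equivariant. Taking $T_1 \times \cdots \times T_k$-invariants then commutes with the degeneration (the weight pieces degenerate piece-by-piece), giving a flat toric degeneration of $A$ to the affine semigroup algebra
$$ \C\bigl[\, C_L(\bold{i}_1, \ldots, \bold{i}_k) \,\bigr], \qquad C_L(\bold{i}_1, \ldots, \bold{i}_k) \;=\; \bigl(C_{L_0}(\bold{i}_1) \oplus \cdots \oplus C_{L_{k-1}}(\bold{i}_k)\bigr) \cap \ker, $$
where $\ker$ is the kernel of the anti-diagonal weight map to $T_1 \times \cdots \times T_k$. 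This intersection is a rational polyhedral cone, so the invariant algebra is toric. Composing the two flat families (first $R(i_{L,G}) \leadsto A$ from Theorem \ref{branchfiltration}, then $A \leadsto \C[C_L(\bold{i}_1, \ldots, \bold{i}_k)]$ just constructed) produces the desired toric degeneration of $R(i_{L,G})$.

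The main obstacle I anticipate is verifying that taking $T_1 \times \cdots \times T_k$-invariants genuinely commutes with the toric degeneration in a flat manner. The degeneration lives over $\C[t]$, and one needs to check that the invariant subalgebra is itself flat over $\C[t]$ with the expected general and special fibers. This is where the weight-basis property of the dual canonical basis (every homogeneous component for $T_1 \times \cdots \times T_k$ has a basis of elements whose string parameters already lie in a single weight component) does the real work: it shows the invariant subalgebra has a $\C[t]$-basis given by those lattice points of the toric cone lying in the anti-diagonal kernel, from which flatness and the identification of the special fiber with the semigroup algebra on the intersection cone follow directly. The remainder of the argument is formal from Theorems \ref{branchfiltration} and \ref{basisinherit}.
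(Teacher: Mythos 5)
Your proposal is correct and uses exactly the same two ingredients as the paper — the branching filtration from the chain of Levi subgroups (Theorem \ref{branchfiltration}) and the string-cone degeneration coming from the dual canonical basis (Theorem \ref{basisinherit}) — but it packages them differently. The paper does not compose two flat families. Instead it constructs a single basis of $R(i_{L,G})$ by tensoring dual canonical basis elements across the factorization diagram, endows the label set $C_{\vec{I}}(\bold{i})$ with one refined partial order (chain weights first, then string parameters lexicographically), proves lower triangularity of multiplication once, and then applies Propositions \ref{pval2} and \ref{reese} in a single step. Your route is more modular: you degenerate $R(i_{L,G})$ to $A = [\otimes_j R(i_{L_{j-1},L_j})]^{T_1\times\cdots\times T_k}$, degenerate each tensor factor to a string-cone algebra, and then want to pass both through the torus invariants and the composition of families. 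This works, but it forces you to argue two points that the paper's direct construction makes automatic: (i) that taking $T_1\times\cdots\times T_k$-invariants commutes with the string-cone degeneration — you correctly flag that this holds because the dual canonical basis consists of torus weight vectors, so the filtration is equivariant and the invariant subalgebra is a graded direct summand; and (ii) that the two flat families can be "stacked" into one. Point (ii) is the spot where your write-up is thinnest: two successive flat degenerations do not compose automatically unless the second filtration can be lifted through the first, which in this setting is exactly the lexicographic refinement of the two partial orders — i.e., the very filtration the paper builds from the start. So your approach is sound, but the paper's single refined filtration is the mechanism that makes the composition rigorous, and stating that refinement explicitly (rather than invoking "compose the families") would close the gap.
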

\noindent

 The result is easiest to describe when the strings $\bold{i}_j$ 
are all compatible in the sense that they concatenate to a string $\bold{i}$ of $G.$   For choices of these "adapted" strings, $R(i_{L,G})$ degenerates to the toric algebra  $\C[C_{\vec{I}}(\bold{i})].$
Here  $C_{\vec{I}}(\bold{i})$ is the fiber product  $C_{L}(\bold{i}_1) \times_{\Delta_{L_1}} \ldots \times_{\Delta_{L_m}} C_{L_m}(\bold{i}_G),$ where $C_{L_j}(\bold{i}_j)$ is the cone for the inclusion of $L_{j-1}$ into $L_j.$  There is also a map $C_{\vec{I}}(\bold{i}) \to C(i_{L,G})$ which enriches the Levi branching cone.

\subsection{The case of type $A$ and remarks}

In Section \ref{examples} we discuss a particular realization of $\C[C_{\tree}(\bold{i})]$ for $SL_m(\C)$ using Berenstein-Zelevinsky triangles \cite{BZ3}.  The result is a description of a particular $C_{\tree}(\bold{i})$ as the non-negative integer points in the intersection of a collection of linear spaces.

In certain cases the two constructions presented here apply to the same algebras.  As with $R(\delta_n),$ any subalgebra
$R(i_{L,G}, M) \subset R(i_{L,G})$ corresponding to a submonoid $M \subset C(i_{L,G}) \subset \Delta_H \times \Delta_G$ inherits the degenerations defined
by a chain of Levi subgroups $i_{L_1,L_2}, \ldots, i_{L_m, G}$ with selected strings $\bold{i}_k.$  An example of such a submonoid
is given by the multiplies of the $m-th$ dominant weight $\omega_m$ of $GL_n(\C).$  The subalgebra
$R_G(\Z_{\geq 0} \omega_m) \subset R_G$ is the projective coordinate ring of the Grassmannian $Gr_m(\C^n).$ 
This algebra can also be realized as a subalgebra $R(\delta_n, (\Z_{\geq 0}\omega_{m-1})^n) \subset R(\delta_n)$ for
$G = GL_m(\C).$   It would interesting to see how the different degenerations constructed
here for $R_G$ and $R(\delta_n)$ are related under these dualities. 

Valuations have been used to connect the combinatorics of branching problems to the commutative algebra of branching algebras before.
Howe, Tan and Willenbring \cite{HTW2} arrive at a SAGBI (Sub-algebra Analogue of Gr\"obner Basis for Ideals), 
interpretation of the Littlewood-Richardson rule, this is in the same spirit as this paper, as SAGBI degenerations  arise from the higher rank valuations implicit in the lexicographic ordering on the dual canonical basis.   Also in a similar spirit, Kaveh and Anderson\cite{K}, \cite{A}, have been 
connecting the dual canonical basis to a special type of valuation built from a full flag of subspaces in a flag 
variety or Schubert variety of $G.$  This allows them to realize meaningful polytopes from representation theory 
as the so-called Okounkov bodies of these valuations.  It would be interesting to see a similar interpretation 
the polytope $C_{\tree, \vec{\lambda}}(\bold{i})$ as an Okounkov body for some flag of subspaces in the 
projective variety $P_{\vec{\lambda}}(G).$

\section{Valuations and tropical geometry}\label{trop}

In this section we collect a few technical facts about valuations on a commutative algebra $A$ and their relationship to the tropical geometry of the associated variety $Spec(A)$.   We will define subductive generating sets $X \subset A$ with respect to a given valuation $v:A \to Q.$
Subductive sets are a generalization of SAGBI bases, and are an interesting construction in their own right.
While writing this paper we were made aware of the paper \cite{K}, where Kaveh has arrived
at the same definition.  We will use subductive sets to extend some of the results in \cite{C} and \cite{AB}
on constructing flat degenerations from term orderings on an algebra, in particular we will construct a cone
$D(X, \Gamma)$ in $\mathbb{V}_{\trop}(A)$ with respect to a subductive set $X$ and 
a generating set of relations $\Gamma.$    When this construction
is applied to the term orders defined by the dual canonical basis on $R(i_{L,G})$ and $R(\delta_n)$ it provides a way to to turn these term orders into flat degenerations.

\begin{definition}
We say $Q$ is a tropical field, if it is a totally ordered abelian group with a least element $-\infty.$
\end{definition}

\noindent
 For any tropical field $Q$ and an index set $X,$ we can form the polynomial semi-algebra $Q[X].$ An element in $Q[X]$ is a tropical polynomial. 

$$
\bigoplus q_i \otimes x_1^{\otimes n_1(i)} \otimes \ldots \otimes x_r^{\otimes n_r(i)}
$$

\noindent
Each tropical polynomial $F \in Q[X]$ determines a subspace $tr(F) \subset Q^X,$ the tropical
variety of $F.$  This is the set of points $\vec{q} \in Q^X$ where at least two monomials from $F$ take the maximum value. 

\begin{definition}
A valuation into $Q$ on a commutative algebra $A$ over $\C,$ is a function 
$v: A \to Q$ which satisfies the following properties. 

\begin{enumerate}
\item $v(ab) = v(a) \otimes v(b)$\\
\item $v(a + b) \leq v(a) \oplus v(b)$\\
\item $v(0) = -\infty$\\
\item $v(C) = 0, C \in \C^{\times}$\\
\end{enumerate}

We denote the set of all such functions by $\mathbb{V}_Q(A).$
\end{definition}

Let $v:K \to Q$ be a valuation on a field $K.$  
For a polynomial $f(X)  = \sum C_i \vec{x}^{\vec{m}_i} \in K[X]$ we define a polynomial
in the semiring $Q[X]$ called  the tropicalization of $f(X)$ as follows.

\begin{equation}
T(f) = \bigoplus v(C_i) \vec{x}^{\otimes \vec{m}_i}\\
\end{equation}

\noindent
For an ideal $I \subset K[X]$  we can then define the tropical variety $tr(I) = \cap_{f \in I} tr(T(f)).$
Notice that if $v: K \to Q$ is the trivial valuation, then $T(f)$ has only the tropically multiplicative identity $0$ as a coefficient.

When $Q = \trop,$ the set $\mathbb{V}_{\trop}(A)$ has the structure
of a Haussdorf topological space, and it is referred to as the analytification
of $Spec(A),$ see \cite{P}, \cite{B}.  In this case, a tropical variety
$tr(I) \subset \trop^{X}$ is a subfan of the Gr\"obner fan of $I.$
Higher rank tropical fields appear naturally in combinatorial commutative algebra as well.

\begin{example}
In the setting of SAGBI theory, one studies a subalgebra $A \subset \C[X]$ of
a polynomial algebra with the deg-lex term order.   This defines a valuation into $\Z^{|X|+1},,$ ordered lexicographically.  In particular, a polynomial $f(x)$ is taken to its degree paired with the exponents of its top monomial. 
\end{example}

\noindent
There is a close relationship between the sets $tr_Q(I)$ and $\mathbb{V}_Q(A).$

\begin{theorem}
Let $I$ be a presenting ideal of $A.$

$$
\begin{CD}
0 @>>> I @>>> \C[X] @>>> A @>>> 0\\
\end{CD}
$$
\noindent
Then there is a map,

\begin{equation}
\pi_X: \mathbb{V}_Q(A) \to tr_Q(I)\\
\end{equation}

\noindent
furthermore, there is a bijection.

\begin{equation}
\mathbb{V}_Q(A) \cong \varprojlim_{X \subset A} tr_Q(I)\\
\end{equation}
\end{theorem}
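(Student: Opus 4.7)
The plan is to construct $\pi_X$ by evaluating a valuation on the generators, verify it lands in the tropical variety via the ultrametric consequence of the valuation axioms, and then upgrade this to the inverse-limit bijection by allowing the generating set to vary over a directed system of finite subsets of $A$.

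First, for a presentation $\C[X] \twoheadrightarrow A$, I would define $\pi_X(v) := (v(x))_{x \in X} \in Q^X$. To see $\pi_X(v) \in tr_Q(I)$, fix $f = \sum_i C_i \vec{x}^{\vec{m}_i} \in I$; since $f$ vanishes in $A$, we have $v(f) = -\infty$. Multiplicativity and triviality on $\C^\times$ give
\begin{equation}
v\bigl(C_i \vec{x}^{\vec{m}_i}\bigr) = \bigotimes_j v(x_j)^{\otimes m_i(j)},
\end{equation}
which is exactly the $i$-th monomial of $T(f)$ evaluated at $\pi_X(v)$. The standard ultrametric property $v(a) > v(b) \Rightarrow v(a+b) = v(a)$, which follows from the axioms plus $v(-1)=0$, implies that if the maximum of these monomial values were attained uniquely at some $i_0$, then $v(f)$ would equal the finite value at $i_0$, contradicting $v(f) = -\infty$. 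Hence the maximum is attained at least twice, so $\pi_X(v) \in tr(T(f))$, and since $f$ was arbitrary, $\pi_X(v) \in tr_Q(I)$.

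For the bijection, I would take the directed system of finite generating sets $X \subset A$ under inclusion; an inclusion $X \subset Y$ yields $\C[X] \subset \C[Y]$ with $I_X = I_Y \cap \C[X]$, so the coordinate projection $\rho_{Y,X} : Q^Y \to Q^X$ restricts to a map $tr_Q(I_Y) \to tr_Q(I_X)$. These restrictions are manifestly compatible with the $\pi_X$, producing a canonical map $\Phi : \mathbb{V}_Q(A) \to \varprojlim_X tr_Q(I_X)$. Injectivity is immediate because the axioms force $v$ to be determined by its values on any generating set. For surjectivity, given a coherent system $(p_X)$, I define $v(a) := p_X(a)$ for any $X \ni a$; coherence makes this unambiguous. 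The axioms are verified element-by-element: to check $v(ab) = v(a) \otimes v(b)$ and $v(a+b) \leq v(a) \oplus v(b)$, one picks a generating set $X$ containing $a$, $b$, and $ab$ (resp.\ $a+b$), and reads off the tropicalizations of the polynomial relations $ab - y = 0$ and $a + b - z = 0$ which lie in $I_X$; these tropical relations evaluated at $p_X$ are precisely the required equality and inequality.

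The principal obstacle I expect is the surjectivity step, specifically packaging arbitrary elements of $A$ into a common presentation in a coherent way. One must ensure the directed system of finite generating sets is cofinal enough to support these verifications, and that the definition of $v(a)$ does not secretly depend on how $X$ is chosen to contain a given pair $a, b$. Multiplicativity translates cleanly from the tropicalization of $ab - y$, but subadditivity requires reading the tropicalization of $a + b - z$ as forcing $p_X(z) \leq p_X(a) \oplus p_X(b)$, which is the content of $z$ not uniquely dominating in the tropical sum. Once these local verifications are in hand, the inverse-limit identification is a standard cofinality argument.
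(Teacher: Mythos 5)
Your argument is, in substance, the standard one that the paper invokes by reference (its ``proof'' is simply to emulate Payne's Theorem 1.1): the membership $\pi_X(v)\in tr_Q(I)$ via the ultrametric consequence of the axioms, the compatibility of coordinate projections over the directed system of finite generating sets, and the reconstruction of $v$ from a coherent family by reading $v(ab)=v(a)\otimes v(b)$ and $v(a+b)\leq v(a)\oplus v(b)$ off the tropicalized relations $z_{ab}-z_az_b$ and $z_{a+b}-z_a-z_b$ are all correct and are exactly how the cited proof goes in the affine setting.

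One justification, however, is genuinely wrong as stated: injectivity is \emph{not} because ``the axioms force $v$ to be determined by its values on any generating set.'' A valuation is not determined by its values on a single generating set; for instance on $A=\C[x]$ the trivial valuation and the valuation $v(p)=-\mathrm{ord}_{x=1}(p)$ both assign $0$ to the generator $x$ but differ on $x-1$. (If your claim were true, a single $\pi_X$ would already separate valuations and the inverse limit would be pointless.) The correct, still immediate, reason is that every $a\in A$ lies in \emph{some} finite generating set $X$, and the $X$-component of $\Phi(v)$ records $v(a)$ in the coordinate indexed by $a$; hence $\Phi(v)=\Phi(v')$ forces $v(a)=v'(a)$ for all $a$. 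Two smaller points you should also tie up in the surjectivity step: recover the axiom $v(c)=0$ for $c\in\C^{\times}$ from the relation $z_c-c\in I_X$ (two tropical monomials, hence forced equality $p_X(c)=0$), and handle $v(0)=-\infty$ by convention (e.g.\ exclude $0$ from generating sets), since with the paper's definition a monomial lying in $I$ has empty tropical locus and would otherwise cause trouble.
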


\begin{proof}
For this, emulate the proof in \cite{P} of Theorem 1.1.
\end{proof}

\subsection{The subduction algorithm}

In SAGBI theory the subduction algorithm writes an arbitrary
polynomial $p(x) \in A \subset \C[X]$ as a polynomial in a SAGBI basis $\{f_1, \ldots, f_r\} \subset A,$
see for example \cite{St}, Chapter 11. We will give a generalization of this algorithm to a general valuation on an algebra $A.$  

\begin{definition}
For a valuation $v:A \to Q,$ filter $A$ by values of $Q,$ $A= \bigcup A_{\leq q}.$ 
Let $gr_v(A)$ be the associated graded algebra with respect to this filtration.
let $X \subset A$ give a homogenous generating set $\bar{X}  \subset gr_v(A).$ For $a \in A$ we perform subduction on $a$ as follows.  

\begin{enumerate}
\item Find $p(\bar{x}) = \bar{a} \in gr_v(A).$ \\
\item Consider $a - p(x) \in A,$ which must have strictly lower filtration value. Repeat (1) on $a- p(x).$\\
\end{enumerate}

\noindent
If this process always terminates then we say $X \subset A$ is subductive.
\end{definition}


\noindent
Any subductive set $X$ generates $A,$ also notice that $X$ subductive implies that $X \cup \{y\}$ for $y \in A$ subductive.

\begin{definition}
For a vector $\vec{q} \in Q^X$ and a polynomial $f \in \C[X],$
we define the initial form $in_{\vec{q}}(f)$ to be the sum of the monomials in $f$ which have highest value on $\vec{q}$ when tropicalized.   For $\vec{q} \in tr_Q(I)$ define $in_{\vec{q}}(I)$ to be the ideal generated by the $\vec{q}-$initial forms from $I.$
\end{definition}

Each ideal $in_{\vec{q}}(I)$  defines a new algebra $\C[X]/in_{\vec{q}}(I).$
For any $v \in \mathbb{V}_Q(A),$ both $v$ and $\pi_X(v)$ define algebras
constructed from $A,$ which leads naturally to the following proposition.

\begin{proposition}\label{subduct}
If $X \subset A$ is subductive with respect to $v$ then we have the following. 

\begin{equation}
gr_v(A) \cong \C[X]/in_v(I)\\
\end{equation}
\end{proposition}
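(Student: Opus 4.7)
The plan is to exhibit an explicit surjection $\psi: \C[X] \to gr_v(A)$ whose kernel coincides with $in_v(I)$. First, grade $\C[X]$ by $Q$ by assigning each monomial $\vec{x}^{\vec{m}}$ the degree $\sum_j m_j v(x_j)$. Composing the evaluation map $\C[X] \to A$ with the projection onto filtration quotients, a $Q$-homogeneous polynomial $f$ of degree $q$ maps to the class of $f(\vec{x})$ in $A_{\leq q}/A_{<q} = gr_v(A)_q$. This gives a $Q$-graded homomorphism $\psi: \C[X] \to gr_v(A)$, and surjectivity is immediate from the hypothesis that $\bar{X}$ generates $gr_v(A)$.

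Next, I would verify the inclusion $in_v(I) \subset \ker(\psi)$. Let $g \in I$ and decompose it into $Q$-homogeneous parts $g = \sum_r g_r$, with $q_{\max}$ the largest degree appearing. The initial form $in_{\pi_X(v)}(g)$ is precisely $g_{q_{\max}}$. Since $g(\vec{x}) = 0$ in $A$, we have $g_{q_{\max}}(\vec{x}) = -\sum_{r < q_{\max}} g_r(\vec{x}) \in A_{<q_{\max}}$, and hence $\psi(g_{q_{\max}}) = 0$. Because $in_v(I)$ is generated by such initial forms, this inclusion follows.

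The reverse inclusion $\ker(\psi) \subset in_v(I)$ is the main obstacle and the place where subductivity enters essentially. Since $\psi$ is $Q$-graded, it suffices to take a $Q$-homogeneous element $f \in \ker(\psi)$ of degree $q$ and exhibit it in $in_v(I)$. Set $a = f(\vec{x}) \in A$; by the grading $v(a) \leq q$, and $\psi(f) = 0$ forces $v(a) < q$. Apply the subduction algorithm to $a$. Termination, guaranteed by the subductive hypothesis, yields finitely many $Q$-homogeneous polynomials $p_1, \ldots, p_k \in \C[X]$ with strictly decreasing degrees $v(a) = \deg p_1 > \deg p_2 > \cdots$, each strictly less than $q$, such that $a = \sum_i p_i(\vec{x})$. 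The polynomial $f - \sum_i p_i$ then lies in the kernel of evaluation, i.e., in $I$, and because every $p_i$ has $Q$-degree strictly less than $q = \deg f$, the initial form $in_{\pi_X(v)}(f - \sum_i p_i)$ equals $f$. Hence $f \in in_v(I)$.

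The only subtlety requiring care is that each step of subduction produces a $Q$-homogeneous polynomial of the precise degree predicted by the current $v$-value of the remainder; this is built into the algorithm because $\bar{X}$ is taken to be a homogeneous generating set of $gr_v(A)$, so that any class $\bar{a} \in gr_v(A)_r$ is represented by a $Q$-homogeneous polynomial of degree $r$ in $\bar{X}$. With this in hand, the identity $\ker(\psi) = in_v(I)$ yields the asserted isomorphism $\C[X]/in_v(I) \cong gr_v(A)$.
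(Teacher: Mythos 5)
Your proof is correct and follows essentially the same strategy as the paper: the paper implicitly works with the presentation $\C[X] \twoheadrightarrow gr_v(A)$ with kernel $J$ and shows $J = in_v(I)$ by the same two-sided containment argument, using subduction for the harder inclusion exactly as you do. The only difference is expository — you make the graded map $\psi$ and its kernel explicit rather than referring to the presenting ideal $J$ abstractly — which is a modest clarity gain but not a new route.
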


\begin{proof}
Let $J$ be the homogeneous ideal which presents $gr_v(A)$ as a quotient of $\C[X],$
we will show $J = in_v(I).$  The inclusion $in_v(I) \subset J$ is easier, any $f \in in_v(I)$ is of the form
$in_v(f + f')$ for $f'$ of lower $v-$degree in $\C[X],$ as graded by
$v.$  This implies that $f(\vec{x}) \in A_{< r}$ for $r$ the $v$-degree of the
monomials in $f,$ so $f(\vec{x}) = 0$ in $gr_v(A).$  Now, if $g \in J,$ is homogeneous
then $g(\vec{x}) \in A_{< r}$ for $r$ the $v-$degree of $g,$ which means
there is a polynomial in monomial terms of strictly less $v-$degree, $p(\vec{x})$
such that $g(\vec{x}) - p(\vec{x}) \in A_{< r_1}$ for $r_1$ the $v-$degree
of $p_1$ and $r_1 < r.$ Continuing this way, we get a polynomial relation $\hat{g}$
with $in_v(\hat{g}) = g$ by the termination of the subduction algorithm. 
\end{proof}

\subsection{Partial orderings}

We make use of a number of partially ordered cones, specifically Weyl chambers, and products of Weyl chambers with lexicographically
ordered products non-negative integers.  We will cover some techniques for turning these partial orders
into valuations and material for flat degenerations.   Consider a commutative algebra $A = \bigoplus_{c \in C} A_c$ 
with underlying vector space graded by a commutative monoid $C.$  Suppose there is a partial ordering $<$ on $C$ such that
$A_{\eta_1}A_{\eta_2} \subset \bigoplus_{\eta \leq \eta_1 + \eta_2} A_{\eta}.$ We can define an associated
graded algebra by altering the multiplication operation on the vector space $\bigoplus_{c\in C} A_c$ by projecting
onto the top weighted component. 

\begin{equation}
A_c \times A_d \to A_{c + d} \\
\end{equation}

\noindent
We call this new algebra $gr_{<}(A).$  For a tropical field $Q$  let $e: C \to Q$
be a monoidal map such that $c < d$ implies $e(c) \oplus e(d) = e(d)$ in $Q.$

\begin{proposition}\label{pval}
If $gr_{<}(A)$ is a domain, any $e$ as above defines a valuation on $A.$
\end{proposition}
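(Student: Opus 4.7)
The plan is to define $v : A \to Q$ on a nonzero element $a$ with $C$-graded decomposition $a = \sum_c a_c$ by
\[
v(a) := \bigoplus_{c:\, a_c \neq 0} e(c),
\]
and $v(0) := -\infty$. The axioms $v(0) = -\infty$, $v(C) = 0$ for $C \in \C^\times$, and the subadditivity $v(a+b) \leq v(a) \oplus v(b)$ follow immediately from this definition together with the monoidality of $e$ (so that the identity $0 \in C$ is sent to $0 \in Q$) and the totality of the order on $Q$. The substantive axiom to establish is the multiplicativity $v(ab) = v(a) \otimes v(b)$.

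I would reformulate this axiom by observing that $v$ is the valuation attached to the $Q$-indexed filtration $F_q A := \bigoplus_{c:\, e(c) \leq q} A_c$. The hypothesis $A_{\eta_1}A_{\eta_2} \subset \bigoplus_{\eta \leq \eta_1+\eta_2} A_\eta$, together with $e$ being order preserving and a monoid map, immediately gives $F_{q_1} F_{q_2} \subset F_{q_1+q_2}$, so $F$ is multiplicative. Multiplicativity of $v$ is then equivalent to the associated graded $gr_F(A)$ being an integral domain, which is the statement I would aim to prove using the hypothesis that $gr_<(A)$ is a domain. To connect them, both $gr_F(A)$ and $gr_<(A)$ have underlying vector space $\bigoplus_c A_c$, but the product of $a_c \in A_c$ with $b_d \in A_d$ in $gr_<(A)$ is the single top component $\pi_{c+d}(a_c b_d)$, whereas in $gr_F(A)$ the product retains every component $(a_c b_d)_\eta$ with $\eta \leq c+d$ and $e(\eta) = e(c+d)$.

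The plan is then to equip $gr_F(A)$ with a further filtration indexed by the poset $(C,<)$ by setting $G_c := \bigoplus_{c' \leq c} A_{c'}$, to verify that $G$ is multiplicative inside $gr_F(A)$, and to identify its associated graded with $gr_<(A)$ on the nose. The standard principle that a filtered algebra whose associated graded is a domain is itself a domain, applied to $gr_F(A)$ with the filtration $G$, then gives that $gr_F(A)$ is a domain, completing the proof of multiplicativity. The hard part will be the careful handling of the poset filtration $G$: because $<$ need not be a total order, leading terms are not unique, so to check the non-cancellation needed for the associated graded identification one must restrict to the finite supports of the particular elements, choose maximal elements within them, and invoke translation invariance of $<$ on the monoid $C$ (implicit in applying the filtration hypothesis to single graded pieces) to rule out unwanted contributions to the top $Q$-level of the product.
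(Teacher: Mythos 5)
Your argument follows essentially the same route as the paper's: reduce multiplicativity of $v$ to $gr_e(A)$ being a domain, and deduce this from the hypothesis on $gr_<(A)$ by observing that the $gr_e$-product is the $gr_<$-product together with terms of strictly lower $C$-weight. Your packaging of this as an iterated filtration (first by $e$-value, then by the poset $(C,<)$ inside $gr_F(A)$) is a clean reorganization of the paper's two-case argument, and the delicate point you flag --- that $<$ is only a partial order, so one must work with maximal elements of the finite sets of $C$-weights appearing in a product to make the non-cancellation precise --- is exactly what the paper compresses into its phrase ``cannot cancel $a_jb_k$ because it is in a lower summand by assumption.''
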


\begin{proof}
We begin by assuming that $c < d$ implies that $e(c) < e(d).$  We claim
that if this is the case, then $gr_e(A) = gr_{<}(A).$  The function $e$ defines
a filtration on $A$ by $A_{\leq r} = \bigoplus_{e(c) \leq r} A_c$
and $A_{< r} =  \bigoplus_{e(c) < r} A_c.$  Clearly this is multiplicative
and the associated graded algebra can be identified as follows. 

\begin{equation}
gr_e(A) = \bigoplus_{r \in Q} A_{\leq r}/A_{ < r} = \bigoplus_{r\in Q} (\bigoplus_{e(c) = r} A_c)\\
\end{equation}

\noindent
For $\sum a_j \in A_r,$ and $\sum b_k \in A_s,$ where each $a_j$ and $b_k$ are in different
$C-$components,  we have $(\sum a_j)(\sum b_k) = \sum \bar{a_jb_k},$ where 
$\bar{a_jb_k} \in A_{\leq e(j+k)} / A_{< e(j+k)}.$ But since $e(c) < e(d)$ whenever $c < d$ 
this is just multiplication in $gr_{<}(A),$ and therefore this element is non-zero.  If $e$ only satisfies
$e(c) \leq e(d)$ then the result of multiplication is the image of $a_jb_k \in A_{j + k}$ plus more, which cannot cancel $a_jb_k$ because it is in a lower summand by assumption.  This implies that $e$ defines a filtration with associated graded algebra a domain, and hence a valuation. 
\end{proof}

Note that the set $\{ e: C \to Q | c < d \rightarrow e(c) \oplus e(d) = e(d) \}$ forms a cone $C_Q \subset Hom(C, Q).$  
The above proposition implies that there is a map $C_Q \to \mathbb{V}_{Q}(A).$  The following proposition
shows that generating sets of the associated graded algebra are subductive with respect to these valuations.

\begin{proposition}\label{psubduct}
Let $A$ be filtered by a partially ordered monoid as above, suppose that the 
down-set $D_c = \{ d \in C | d < c\}$ is finite for all $c,$ and let $e: C \to Q$ be as
above.  If $\bar{X} \subset gr_e(A)$ is a homogeneous generating set, then it lifts to a
subductive set $X$ with respect to $e.$
\end{proposition}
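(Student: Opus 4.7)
The plan is to lift $\bar{X}$ to a set $X \subset A$ of preimages with controlled $C$-degrees, and then show that the $C$-supports of iterates produced by the subduction algorithm all remain inside a finite subset $U(a) \subset C$; termination of subduction on $a$ then follows because the strictly decreasing top $e$-values take only finitely many values within the finite set $e(U(a)) \subset Q$. Since the proof of Proposition~\ref{pval} identifies $gr_e(A)$ with $gr_{<}(A)$, the $Q$-graded algebra $gr_e(A)$ refines to a $C$-graded algebra; replacing each $\bar{x} \in \bar{X}$ by its $C$-homogeneous components gives a $C$-homogeneous generating set, so I may assume each $\bar{x} \in A_c$ for a single $c \in C$. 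I choose any lift $x \in A_c$ and let $X$ be the collection of these lifts.

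For a fixed $a \in A$, I would write $a = \sum_{c \in S_a} a_c$ in its $C$-decomposition with $S_a \subset C$ finite, and set $U(a) = S_a \cup \bigcup_{c \in S_a} D_c$. The hypothesis that each $D_c$ is finite makes $U(a)$ a finite subset of $C$, and transitivity of $<$ makes $U(a)$ closed under the operation $d \mapsto D_d$. I would then prove by induction on the iteration number $n$ that every subduction iterate $a^{(n)}$ has $C$-support in $U(a)$. For the inductive step, let $r_n$ be the top $e$-value of $a^{(n)}$ and write the top component $\overline{a^{(n)}} \in \bigoplus_{e(c) = r_n} A_c$ as a polynomial $p(\bar{x})$ in the generators. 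Each monomial $\bar{x}_{i_1} \cdots \bar{x}_{i_k}$ sits in $A_{c_{i_1} + \cdots + c_{i_k}}$ inside $gr_{<}(A)$, and its contribution to the top component forces the exponent $c_{i_1} + \cdots + c_{i_k}$ to lie in $U(a) \cap e^{-1}(r_n)$. Lifting to $A$, the product $x_{i_1} \cdots x_{i_k}$ has $C$-support in $\{c_{i_1} + \cdots + c_{i_k}\} \cup D_{c_{i_1} + \cdots + c_{i_k}} \subset U(a)$ by closure, so $\mathrm{supp}(p(x)) \subset U(a)$ and hence $\mathrm{supp}(a^{(n+1)}) = \mathrm{supp}(a^{(n)} - p(x)) \subset U(a)$.

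Since each subduction step strictly lowers the top $e$-value while all such values remain in the finite set $e(U(a))$, the process must terminate. The main obstacle is tracking how multiplication in $A$ differs from multiplication in $gr_{<}(A)$: products of lifts in $A$ pick up correction terms in strictly lower $C$-components, and it is precisely the down-set finiteness hypothesis that guarantees these corrections do not escape the finite tracking set $U(a)$.
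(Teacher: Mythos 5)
Your proof is correct and takes essentially the same approach as the paper's: track the $C$-supports of the subduction iterates inside a fixed finite set so that the strictly decreasing top $e$-values can only take finitely many values. The paper condenses this by reducing without loss of generality to $a \in A_c$, so the finite tracking set is simply $D_c$ (closed under down-sets by transitivity); you handle general $a$ via the set $U(a)$ and make explicit the refinement of $\bar{X}$ to $C$-homogeneous components, both of which the paper leaves implicit.
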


\begin{proof}
Given $a \in A$ we may assume without loss of generality that $a \in A_c$ for some $c.$ Let
$p(\bar{x}) = a$ in the associated graded algebra.  We have that $a - p(x)$ is composed of terms of weight strictly less than $c.$
Since $D_c$ is finite, we only need repeat this process a finite number of times, so the subduction algorithm terminates. 
\end{proof}

We will prove a strengthening of Proposition \ref{pval}. 
Let $X \subset A$ give a homogeneous generating set for $gr_<(A),$ and let
$J$ be the homogeneous ideal of the corresponding presentation.  Using a subduction type argument
we can lift an element $g \in J$ to $\hat{g} \in I$ with the property that $\hat{g} = g + \ell$
with all the terms in $\ell$ of weight less than the homogenous degree of $g.$  Let $\Gamma$
be a lifting of a generating set of $J.$

\begin{definition}
Define $D(X, \Gamma)$  to be the set of monoidal maps $e:C \to Q$ such that 
$e$ weights the monomials of $g$ higher than any term in $\ell$ for all $\hat{g} \in \Gamma.$
\end{definition}

\noindent
We define a filtration on $A$ by letting $A_{\leq q} = \C\{ x_1^{m_1}\ldots x_k^{m_k} | \sum m_i e(c_i) \leq q\},$ where
$c_i$ is the $C-$degree of $x_i.$ This is the span of all monomials in $X$ with $e$-degree less than or equal to $q.$

\begin{theorem}\label{pval2}
Let $A$ be as above, with $D_c$ finite for all $c.$  If $e \in D(X, \Gamma),$
then $e$ defines a valuation from $A$ to $Q.$
\end{theorem}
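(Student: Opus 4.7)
The plan is to verify that the filtration $\{A_{\leq q}\}$ defined by $e$ has associated graded algebra isomorphic to $gr_<(A)$. Under the standing assumption that $gr_<(A)$ is a domain, this identification then forces the filtration to define a valuation. First, one checks that $\{A_{\leq q}\}$ is a multiplicative, exhaustive filtration of $A$: the containment $A_{\leq q_1} \cdot A_{\leq q_2} \subset A_{\leq q_1 + q_2}$ follows immediately from the monoidal property of $e$ and the monomial description, while exhaustiveness is automatic because $X$ generates $A$.

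The next step is to identify the associated graded. Mimicking the argument in the proof of Proposition \ref{subduct}, one shows $gr_e(A) \cong \C[X]/in_e(I)$, where $I = \ker(\C[X] \twoheadrightarrow A)$ and $in_e(I)$ denotes the $e$-initial ideal. The crucial task is then to prove $in_e(I) = J$, where $J$ is the presenting ideal of $gr_<(A)$. The containment $J \subset in_e(I)$ is direct: the defining condition of $D(X, \Gamma)$ guarantees that for each $\hat{g} = g + \ell \in \Gamma$ one has $in_e(\hat{g}) = g$, and since the $g$'s generate $J$ by construction, this gives $J \subset in_e(I)$. For the reverse inclusion, one first argues that $\Gamma$ generates $I$ as an ideal via iterated subduction: any $f \in I$ can be reduced modulo $\Gamma$ by matching its leading $<$-component against a generator of $J$, and the finite-down-set condition on $C$ forces the process to terminate. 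Then, writing any $f \in I$ as $\sum p_j \hat{g}_j$, the $e$-initial form $in_e(f)$ emerges as a $\C[X]$-linear combination of the $g_j$ after accounting for cancellations among top-$e$-weight terms, and therefore lies in $J$.

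Granting $in_e(I) = J$, one obtains $gr_e(A) \cong \C[X]/J \cong gr_<(A)$, a domain. The filtration $\{A_{\leq q}\}$ therefore has a domain-valued associated graded, which yields the multiplicativity $v(ab) = v(a) \otimes v(b)$ required of a valuation; the remaining valuation axioms follow immediately from the definition. The main obstacle is the Gr\"obner-style argument in the second step, in particular controlling cancellations among maximal $e$-weight terms when expressing an arbitrary $f \in I$ in terms of $\Gamma$. The strict inequality built into the definition of $D(X, \Gamma)$ is exactly the data needed to ensure that the top-$e$-weight part of any combination $\sum p_j \hat{g}_j$ really is a $\C[X]$-combination of the $g_j$, which is what makes the identification $in_e(I) = J$ go through.
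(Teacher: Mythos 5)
Your plan is the same as the paper's: show $X$ is subductive for the $e$-filtration so that $gr_e(A) \cong \C[X]/in_e(I)$, then prove $in_e(I) = J$ so the associated graded is $gr_<(A)$, a domain, which gives the valuation. Your argument for $J \subset in_e(I)$ is correct and matches the paper. The one place you are slightly loose is in the direction $in_e(I) \subset J$: you say that ``writing any $f \in I$ as $\sum p_j \hat{g}_j$, the $e$-initial form $in_e(f)$ emerges as a $\C[X]$-linear combination of the $g_j$,'' but this is not true for an \emph{arbitrary} expression of $f$ in terms of $\Gamma$, since the top-$e$-weight contributions of the $p_j g_j$ could cancel completely, leaving $in_e(f)$ built out of lower terms. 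What is actually needed, and what the paper constructs, is the \emph{specific} expression $f = \sum x^{M_i}(g_i + \ell_i)$ produced by the iterated subduction (peeling off a $<$-maximal $C$-homogeneous piece $f_{c_m}$, which must already lie in $J$ because its evaluation in $A$ drops weight, and inducting on the finite set $\bigcup D_c$). That expression has the property that the $x^{M_i} g_i$ of a fixed $C$-weight do not cancel, and it is for that controlled expression that $in_e(f)$ can be read off as a combination of the $g_i$'s. You do flag cancellation control as ``the main obstacle,'' and the iterated-subduction idea you mention is the correct mechanism, so this is an imprecision in exposition rather than a wrong route; but as stated, the key step would not survive scrutiny without tying the initial-form computation to the output of the subduction process rather than to an arbitrary presentation $\sum p_j \hat{g}_j$.
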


\begin{proof}
The set $X$ is subductive for the filtration defined by $e,$ so we have
$gr_e(A) = \C[X]/in_e(I).$  We will show that $in_e(I) = J,$ which establishes
that the associated graded algebra of $e$ is $gr_<(A),$ a domain.   One direction
is clear by definition. If $f \in J$ then $f = \sum x^{M_i}g_i$ for $g_i + \ell_i \in G,$
by assumption these are all initial forms for $e.$

For a general $f \in I$ we may write $f = \sum f_c$ for $c \in C.$  Consider
the set $D(f) = \bigcup D_c,$ the union of all the finite down-sets of the weights of the 
terms in the sum above, this is a finite set by assumption.  Consider a maximal element $c_m \in D(f),$ and $f_{c_m}(x) \in A.$  This must land
in the sum $\bigoplus_{c \in D_{c_m}} A_c.$  If a term of $f_{c_m}(\vec{x})$ is non-zero in $A_{c_m},$ then it cannot cancel with any terms from $f_c(\vec{x})$ for any other $c \in D(f),$ 
which contradicts $f \in I,$ this implies that $f_{c_m}(\vec{x}) \in \bigoplus_{c < c_m} A_c,$
and therefore $f_{c_m} \in J.$  We can choose a homogeneous expression $f_{c_m} = \sum x^{M_i}g_i$
and consider $f - \sum x^{M_i}(g_i + \ell_i) \in I.$  By construction, this element has the property that the union of all of the down sets of its weights is a strict subset of $D(f)$.  After repeating this process a finite number of steps we obtain an expression $f = \sum x^{M_i}(g_i + \ell_i)$ where the terms $x^{M_i}g_i$ for a fixed weight don't cancel.  

This shows two things.  First of all, it proves that $G$ generates $I,$ and it
shows that for any lift $\hat{f}$ of an initial form $f \in in_e(I)$ we can produce an expression as above, take its initial form, and recover $f$ as a sum of terms of the form $x^{M_i}g_i,$ which are all in $J.$
\end{proof}

The purpose of these last two propositions is to allow us to start with an algebra that has been filtered by a partially ordered cone, and turn that partial ordering into a valuation.  If that valuation takes values in $\trop$ then we can say more geometrically.  

\begin{proposition}\label{reese}
If $v:A \to \trop$ takes non-negative integer values, then $gr_v(A)$ is a flat
degeneration of $A.$
\end{proposition}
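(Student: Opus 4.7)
The plan is to realize the degeneration explicitly via the Rees algebra construction. Since $v$ takes values in $\Z_{\geq 0} \subset \trop$, the filtration $A_{\leq n} = \{a \in A \mid v(a) \leq n\}$ is an exhaustive $\N$-indexed multiplicative filtration of $A$, with $A_{\leq 0} \supset \C$ and $\bigcup_{n} A_{\leq n} = A$. I would form the Rees algebra
\begin{equation}
\tR \;=\; \bigoplus_{n \geq 0} A_{\leq n}\, t^n \;\subset\; A[t],
\end{equation}
which is naturally a $\C[t]$-subalgebra of $A[t]$, where $t$ is the adjoined parameter.

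First I would verify that $\tR$ is a flat $\C[t]$-module. Because $\tR \subset A[t]$ and $A[t]$ is $t$-torsion free, so is $\tR$; over the principal ideal domain $\C[t]$, torsion-freeness is equivalent to flatness, giving a flat family $\Spec(\tR) \to \Spec(\C[t]) = \mathbb{A}^1$.

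Next I would compute the fibers. For any $c \in \C^\times$, the substitution $t \mapsto c$ sends $\tR$ onto $A$: indeed, $\tR[t^{-1}] \cong A[t, t^{-1}]$ by the map $at^n \mapsto ac^n$ after extending scalars, and so $\tR / (t - c)\tR \cong A$. For the special fiber at $t = 0$, multiplication by $t$ corresponds, in degree $n$, to the inclusion $A_{\leq n-1} \hookrightarrow A_{\leq n}$, so
\begin{equation}
\tR / t\tR \;\cong\; \bigoplus_{n \geq 0} A_{\leq n}/A_{\leq n-1} \;=\; \gr_v(A),
\end{equation}
as graded $\C$-algebras. Thus $\tR$ exhibits $\gr_v(A)$ as the flat limit of $A$.

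The only genuine point to verify is that the filtration steps $A_{\leq n}$ are finite-dimensional enough—or at least that the subsets $A_{\leq n} \subset A$ are $\C$-subspaces closed under the multiplicative structure of the filtration—and this follows immediately from the valuation axioms (1)--(4) in the definition of $\mathbb{V}_Q(A)$. No finite-generation hypothesis on $A$ is needed for flatness itself; if one wishes the family to be geometrically a degeneration in the strict sense (i.e.\ $\Spec(\tR) \to \mathbb{A}^1$ of finite type), one invokes the fact that subductive generating sets exist in the cases of interest, so that both $A$ and $\gr_v(A)$ are presented on the same finite index set $X$ with $\tR$ correspondingly finitely presented. The main (minor) subtlety is ensuring the identification $\tR/(t-c)\tR \cong A$ is clean; this is the step I would write out most carefully.
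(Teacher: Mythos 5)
Your proof is correct and takes essentially the same approach as the paper: both construct the Rees algebra $\tR = \bigoplus_{n\ge 0} A_{\leq n}t^n$, deduce flatness over $\C[t]$ from $t$-torsion-freeness, and identify the special fiber with $\gr_v(A)$ and the general fiber with $A$ (the paper cites Proposition 2.2 of Alexeev--Brion for the details you spell out). Your write-up is more explicit than the paper's one-paragraph sketch, but no new idea is introduced.
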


\noindent
This follows from the standard construction of the Reese algebra, see for example Proposition 2.2 of \cite{AB}.  For a filtration $F$ on $A$, one constructs a new algebra $\mathcal{R} = \C[t] \otimes (\bigoplus t^n F_{\leq n}),$ this algebra is flat over $\C[t]$ by construction because $t$ is not a zero divisor. specializing at $t = 0$ produces $gr_v(A)$ and $t \neq 0$ produces $A.$  

In this paper, the constructions using the dual canonical basis require us to use Proposition \ref{pval2} to turn a filtration into a flat degeneration.  In order to do so, we require an $e \in D(X, \Gamma)$ as above.  When the filtration is by a partially ordered rational cone, as is the case with all of our constructions, we can use the argument for Proposition 2.2 in \cite{AB} to create this element.

\section{Algebraic constructions from branching problems}\label{branch}

In this section we construct the filtration by factorization diagrams on $R(\phi),$  and employ
Proposition \ref{pval} to build cones of valuations $B(\vec{\phi}) \subset \mathbb{V}_{\trop}(R(\phi))$ for every factorization $\vec{\phi}$ of $\phi.$  These are then glued together to form the complex $K_{\phi} \subset \mathbb{V}_{\trop}(R(\phi)).$    We describe a set of elements in $K_{\phi}$ which define flat degenerations of $R(\phi),$ and we will use these constructions on $R(i_{L,G})$ and $R(\delta_n)$ as the first step in constructing toric degenerations of $R(\delta_n)$ and $R(i_{L,G})$.

\subsection{Filtrations and valuations from a factorization}

Recall that the algebra $R(\phi)$ is multigraded by pairs of dominant weights $(\gamma, \lambda) \in \Delta_H \times \Delta_G.$  
The $(\gamma, \lambda)$ component is isomorphic to $Hom_H(\C, V(\gamma) \otimes V(\lambda)) = Hom_H(V(\gamma^*), V(\lambda)).$  Recall that $C_*: V(\lambda) \otimes V(\eta) \to V(\lambda +\eta) $ is the Cartan multiplication operation, there is a dual operation
$C^*: V(\lambda + \eta) \to V(\lambda) \otimes V(\eta)$ defined by sending the highest weight vector $v_{\lambda + \eta}$ to $v_{\lambda}\otimes v_{\eta}.$  The lemma below appears in \cite{M3}, and its proof is a straightforward computation.

\begin{lemma}
When the multiplication of two homogeneous elements $\phi_1, \phi_2 \in R(\phi)$ is computed by the following diagram. 

$$
\begin{CD}
V(\gamma_1^* + \gamma_2^*) @>C^*>> V(\gamma_1^*)\otimes V(\gamma_2^*) @> \phi_1 \otimes \phi_2>> V(\lambda_1)\otimes V(\lambda_2) @>C_*>> V(\lambda_1 + \lambda_2)\\
\end{CD}
$$

\end{lemma}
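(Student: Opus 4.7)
The plan is to trace the definition of multiplication in $R(\phi)$ through the standard identifications of Hom spaces with invariant subspaces of tensor products, and then read off what happens when Cartan multiplication is performed on each factor.

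First I would recall that $R(\phi) = [R_H \otimes R_G]^H$, and that multiplication in $R_H \otimes R_G$ is component-wise Cartan multiplication on each factor. In other words, on a summand $V(\gamma_1) \otimes V(\lambda_1)$ paired with $V(\gamma_2) \otimes V(\lambda_2)$, the product is the image of the map
\begin{equation}
C_* \otimes C_* : V(\gamma_1) \otimes V(\gamma_2) \otimes V(\lambda_1) \otimes V(\lambda_2) \to V(\gamma_1 + \gamma_2) \otimes V(\lambda_1 + \lambda_2),
\end{equation}
restricted to the tensor product of the $H$-invariant subspaces. The $(\gamma, \lambda)$-component of $R(\phi)$ is $[V(\gamma) \otimes V(\lambda)]^H \cong Hom_H(V(\gamma^*), V(\lambda))$, so I need to see how the Cartan multiplication translates under this identification.

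Next I would use the standard adjunction $[V(\gamma) \otimes V(\lambda)]^H = Hom_H(V(\gamma^*), V(\lambda))$, where an invariant $\sum e_i \otimes v_i$ corresponds to the map $e_i^* \mapsto v_i$ (the $e_i$ a basis of $V(\gamma)$ with dual basis $e_i^*$ of $V(\gamma^*)$). Under this identification, the Cartan product $C_* : V(\gamma_1) \otimes V(\gamma_2) \to V(\gamma_1 + \gamma_2)$ is dual to the inclusion $C^* : V(\gamma_1^* + \gamma_2^*) \hookrightarrow V(\gamma_1^*) \otimes V(\gamma_2^*)$ of the Cartan (highest-weight) component; this is the content of $C_*$ being the projection of the tensor product onto the Cartan summand and $C^*$ being its section on the dual side. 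Pre-composing a map in the Hom-picture with $C^*$ corresponds to applying $C_*$ on the $V(\gamma)$-side in the invariant picture, while post-composing with $C_*$ on the $V(\lambda)$-side is literally the $C_*$ operation there.

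Putting these together, multiplying $\phi_1 \otimes \phi_2$ in $R(\phi)$ and re-expressing the result as an element of $Hom_H(V(\gamma_1^* + \gamma_2^*), V(\lambda_1 + \lambda_2))$ produces exactly the four-term composition $C_* \circ (\phi_1 \otimes \phi_2) \circ C^*$ displayed in the lemma. The only real obstacle is bookkeeping: one must keep careful track of which side is being dualized (the $H$-weight is dualized into $V(\gamma^*)$, the $G$-weight is not) and verify that the adjunction isomorphism intertwines Cartan product on the invariant side with pre-composition by $C^*$ on the Hom side. Once those identifications are made explicit on highest weight vectors and extended by $H$-equivariance, the lemma is immediate.
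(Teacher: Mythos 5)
Your argument is correct and is exactly the ``straightforward computation'' the paper alludes to (it gives no proof of this lemma, only citing \cite{M3}): identify the $(\gamma,\lambda)$-graded piece of $[R_H\otimes R_G]^H$ with $Hom_H(V(\gamma^*),V(\lambda))$ and transport the component-wise Cartan multiplication through that adjunction, so that the $H$-side Cartan product becomes pre-composition with $C^*$ and the $G$-side remains post-composition with $C_*$. The only bookkeeping point worth making explicit is that $C^*$ as defined (highest weight vector to tensor of highest weight vectors) agrees with the transpose of $C_*$ under $V(\gamma)^*\cong V(\gamma^*)$ only up to a nonzero scalar normalization, which does not affect the conclusion.
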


Now consider a factorization $\phi = \psi \circ \pi$ through 
a third reductive group $K.$ For each graded piece $Hom_H(V(\gamma^*), V(\lambda))$
this gives a direct sum decomposition. 

\begin{equation}
Hom_H(V(\gamma^*), V(\lambda)) = \bigoplus_{\tau \in \Delta_K} Hom_H(V(\gamma^*), V(\tau^*)) \otimes Hom_K(V(\tau^*), V(\lambda))\\
\end{equation}

\noindent
Each element in this decomposition can be pictured as a factorization diagram. 

$$
\begin{CD}
V(\gamma^*) @>f>> V(\tau^*) @>g>> V(\lambda)\\
\end{CD}
$$

\noindent
Here $f$ is $H$-linear and $g$ is $K$-linear, so we can also regard this as an element of
$R(\pi) \otimes R(\psi)$.  Now we can write the multiplication in $R(\phi)$ in terms of these diagrams.
$$
\begin{CD}
V(\gamma_1^*)\otimes V(\gamma_2^*) @> f_1 \otimes f_2>> V(\tau_1^*)\otimes V(\tau_2^*) @> g_1 \otimes g_2>> V(\lambda_1) \otimes V(\lambda_2)\\
@AC^* AA  @.    @V C_* VV\\
V(\gamma_1^* + \gamma_2^*) @.  @. V(\lambda_1 + \lambda_2)\\
\end{CD}
$$

\noindent
The middle component has a direct sum decomposition as a $K$ module, 

$$V(\tau_1) \otimes V(\tau_2) = \bigoplus I_{\eta} \otimes V(\eta).$$

\noindent
There are projection $P_{\eta}$ and injection $Q_{\eta}$ maps for each component
in this decomposition.  In this way, the product element can be written as follows. 

\begin{equation}
C_* \circ[g_1 \otimes g_2] \circ [f_1 \otimes f_2] \circ C^*
= \sum C_* \circ[g_1 \otimes g_2] \circ P_{\eta} \circ Q_{\eta} \circ [f_1 \otimes f_2] \circ C^*\\
\end{equation}

\noindent
The $\tau_1^* + \tau_2^*$ component in this sum is exactly $C_* \circ[g_1 \otimes g_2] \circ C^* \circ C_* \circ [f_1 \otimes f_2] \circ C^*.$  By the above observations
we have the following. 

\begin{equation}\label{branchfilter}
W(\alpha_1, \eta_1, \beta_1)W(\alpha_2, \eta_2, \beta_2) \subset \bigoplus_{\eta \leq \eta_1 + \eta_2} W(\alpha_1 + \alpha_2, \eta, \beta_1 + \beta_2)\\
\end{equation}

 Using the direct sum decomposition above, we can filter $R(\phi) = \bigoplus W(\alpha, \eta, \beta)$  by the ordering on the dominant weights of $K.$  Using the filtering index, the $(\alpha_1, \eta_1, \beta_1)$ and $(\alpha_2, \eta_2, \beta_2)$ components of $R(\phi)$ multiply to give elements of weight $(\alpha_1 + \alpha_2, \eta_1 + \eta_2, \beta_1 + \beta_2)$ and lower.  Furthermore, if we cut off the components of strictly lower weight, we get the multiplication operation in the ring $R(\pi) \otimes R(\psi).$  This proves the following version of theorem \ref{branchfilter}. 

\begin{proposition}
There is a filtration $F_{\pi, \psi}$ on $R(\phi)$ by factorization diagrams
 associated to each factorization $\phi = \pi \circ \psi,$
such that the associated graded algebra $gr_{\pi, \psi}(R(\phi))$ is isomorphic to 
$[R(\phi) \otimes R(\pi)]^{T_K}.$ 
\end{proposition}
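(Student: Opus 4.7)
The plan is to use the direct sum decomposition
\[ R(\phi) = \bigoplus_{(\gamma,\tau,\lambda) \in \Delta_H \times \Delta_K \times \Delta_G} W(\gamma,\tau,\lambda), \]
with $W(\gamma,\tau,\lambda) = Hom_H(V(\gamma^*),V(\tau^*)) \otimes Hom_K(V(\tau^*),V(\lambda))$, as the common underlying vector space of both sides of the claimed isomorphism, and to check that multiplications agree under this identification.

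First, I would set up the filtration formally. Partially order $\Delta_H \times \Delta_K \times \Delta_G$ using the Weyl-chamber dominance order in the middle $\Delta_K$-factor (with equality on the outer factors). The containment (\ref{branchfilter}) displayed just before the statement,
\[ W(\gamma_1,\tau_1,\lambda_1)\,W(\gamma_2,\tau_2,\lambda_2) \subset \bigoplus_{\eta \leq \tau_1+\tau_2} W(\gamma_1+\gamma_2,\eta,\lambda_1+\lambda_2), \]
says precisely that this partial order is multiplicative in the sense of Proposition \ref{pval}. So there is a well-defined associated graded algebra $gr_{\pi,\psi}(R(\phi))$ on the same underlying vector space, whose product is obtained by projecting the usual product onto the top $\tau$-component $W(\gamma_1+\gamma_2,\tau_1+\tau_2,\lambda_1+\lambda_2)$.

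Second, I would read off this top component from the commutative diagram preceding (\ref{branchfilter}). Inserting the factorization $Q_{\tau_1+\tau_2} \circ P_{\tau_1+\tau_2}$ through the Cartan summand of $V(\tau_1^*)\otimes V(\tau_2^*)$ splits the top piece of the product as
\[ (C_* \circ [g_1 \otimes g_2] \circ C^*) \otimes (C_* \circ [f_1 \otimes f_2] \circ C^*), \]
which by the preceding lemma is exactly the Cartan product of $g_1, g_2 \in R(\psi)$ tensored with the Cartan product of $f_1, f_2 \in R(\pi)$. Therefore $gr_{\pi,\psi}(R(\phi))$ is, as an algebra, the subspace of $R(\pi) \otimes R(\psi)$ spanned by decomposable tensors $f \otimes g$ whose middle $\Delta_K$-weights agree, equipped with componentwise Cartan multiplication. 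Finally, I would identify this ``matched tensor product'' with $[R(\pi) \otimes R(\psi)]^{T_K}$: the maximal torus $T_K$ acts on $R(\pi)=R_K^{U_H}$ via the $\Delta_K$-grading on its $V(\tau^*)$-factor and on $R(\psi)=R_G^{U_K}$ via the $\Delta_K$-grading on its $V(\tau^*)$-factor, so taking anti-diagonal $T_K$-invariants selects exactly the summands where these two weights coincide.

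The main obstacle is verifying cleanly that inserting $C^* \circ C_*$ in the middle of the diagram does not alter the top $\tau$-component of the product; this amounts to the observation that $C^* C_*$ restricted to the $V(\tau_1+\tau_2)$-isotypic component of $V(\tau_1^*) \otimes V(\tau_2^*)$ acts as the identity on the highest weight line. Everything else — the grading bookkeeping, the conventions for the anti-diagonal $T_K$-action, and termination of the subduction algorithm on each finite-dimensional graded piece — is routine.
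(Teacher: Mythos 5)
Your proposal is correct and takes essentially the same route as the paper: the same decomposition of $R(\phi)$ into spaces of factorization diagrams, the same identification of the top $\Delta_K$-component of a product with the componentwise Cartan product by inserting the Cartan projection/injection in the middle of the composition diagram, and the same recognition of the "matched middle weight" subspace as the anti-diagonal $T_K$-invariants of $R(\pi)\otimes R(\psi)$. (You have also implicitly corrected the typo in the statement, where $[R(\phi)\otimes R(\pi)]^{T_K}$ should read $[R(\psi)\otimes R(\pi)]^{T_K}$.)
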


An element $\rho \in \Delta^*,$ the dual Weyl chamber
is defined by its non-negativity on positive roots.  This implies that the values obtained by applying $\rho$ respect the partial ordering  on the dominant weights of $K$. The next corollary follows from Proposition \ref{pval}.

\begin{corollary}
To each factorization $\pi \circ \psi = \phi$ in the category of reductive groups there is a map $\hat{f}_{\pi,\psi}: \Delta_H^*\times \Delta_K^* \times \Delta_G^* \to \mathbb{V}_{\trop}(R(\phi)).$
\end{corollary}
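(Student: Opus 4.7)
The plan is to construct the valuation directly from a triple of dual Weyl chamber elements by taking a linear combination of their pairings with the three-part weight data, and then to verify the hypotheses of Proposition \ref{pval}. Given $(\rho_H,\rho_K,\rho_G) \in \Delta_H^*\times\Delta_K^*\times\Delta_G^*$, I define a monoidal map
\begin{equation}
e:\Delta_H\times\Delta_K\times\Delta_G\to \trop,\qquad e(\alpha,\eta,\beta)=\rho_H(\alpha)+\rho_K(\eta)+\rho_G(\beta).
\end{equation}
This is linear on the grading monoid by inspection, so the monoidality hypothesis of Proposition \ref{pval} is immediate.

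Next I would verify that $e$ is compatible with the partial ordering underlying the filtration $F_{\pi,\psi}$. By the filtration relation displayed in \eqref{branchfilter}, the relevant partial order on the index set $\Delta_H\times\Delta_K\times\Delta_G$ has $(\alpha_1,\eta_1,\beta_1)<(\alpha_2,\eta_2,\beta_2)$ only when $\alpha_1=\alpha_2$, $\beta_1=\beta_2$, and $\eta_2-\eta_1$ is a nontrivial non-negative combination of positive roots of $K$. Since $\rho_K$ lies in the dual Weyl chamber and is therefore non-negative on positive roots, we obtain $e(\alpha_1,\eta_1,\beta_1)\leq e(\alpha_2,\eta_2,\beta_2)$, which in $\trop$ is the identity $e(c)\oplus e(d)=e(d)$ required by Proposition \ref{pval}. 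The components $\rho_H$ and $\rho_G$ are unconstrained here because the partial order is trivial in the first and third coordinates; they enter purely through the multigrading.

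To invoke Proposition \ref{pval} it remains to check that the associated graded algebra is a domain. By the preceding proposition, $\gr_{F_{\pi,\psi}}(R(\phi))$ is isomorphic to $[R(\psi)\otimes R(\pi)]^{T_K}$. Each factor $R(\psi)$ and $R(\pi)$ is a ring of invariants inside the coordinate ring of a connected reductive group, hence an integral domain; their tensor product over $\C$ is again a domain, and the $T_K$-invariant subring of a domain is a domain. Thus Proposition \ref{pval} applies and $e$ defines a valuation $v_e:R(\phi)\to\trop$.

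The assignment $(\rho_H,\rho_K,\rho_G)\mapsto v_e$ is the map $\hat f_{\pi,\psi}$ of the statement. I expect the only subtle point to be the bookkeeping in the previous paragraph: one must carefully distinguish the $\Delta_H\times\Delta_G$-grading from the $\Delta_K$-filtration, and check that writing the three-part index as a single monoid with the specified partial order makes the hypotheses of Proposition \ref{pval} precisely match what the factorization filtration provides. Once that alignment is made, the corollary is a direct application of the general mechanism developed in Section \ref{trop}.
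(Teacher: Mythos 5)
Your proposal is correct and follows essentially the same route as the paper: the paper's own proof is a brief remark that elements of the dual Weyl chamber, being non-negative on positive roots, respect the partial order defining the filtration, so that Proposition \ref{pval} applies. You have simply made explicit the details the paper leaves implicit, including the verification that the associated graded $[R(\psi)\otimes R(\pi)]^{T_K}$ is a domain (a point the paper takes for granted), and you correctly identify the domain to be $[R(\psi)\otimes R(\pi)]^{T_K}$ where the paper's statement of the preceding proposition contains the typo $[R(\phi)\otimes R(\pi)]^{T_K}$.
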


\begin{proposition}
The image of the above map is the quotient of the cone $\Delta_H^*\times \Delta_K^* \times \Delta_G^*$ by
the relation $X \sim Y$ when $X(\omega) = Y(\omega)$ for all $\omega \in C(\pi) \times_{\Delta_K} C(\psi) = C(\pi, \psi),$ and is therefore equal to $B(\psi, \pi).$
\end{proposition}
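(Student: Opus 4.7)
The plan is to show that the equivalence relation on $\Delta_H^* \times \Delta_K^* \times \Delta_G^*$ induced by $\hat{f}_{\pi,\psi}$---namely $\rho \sim \rho'$ iff $\hat{f}_{\pi,\psi}(\rho) = \hat{f}_{\pi,\psi}(\rho')$---coincides with the relation of agreeing pointwise on $C(\pi,\psi)$. Since $B(\psi,\pi)$ is by definition the quotient of $\Delta_H^* \times \Delta_K^* \times \Delta_G^*$ by the latter relation, establishing this identification realizes the image of $\hat{f}_{\pi,\psi}$ as $B(\psi,\pi)$.

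First I would unpack the valuation. By Proposition~\ref{pval} applied to the partial ordering coming from the filtration~(\ref{branchfilter}), for $f = \sum f_{\alpha,\eta,\beta}$ with $f_{\alpha,\eta,\beta} \in W(\alpha,\eta,\beta)$ one has
\[
\hat{f}_{\pi,\psi}(\rho)(f) \;=\; \max\{\, \rho(\alpha,\eta,\beta) : f_{\alpha,\eta,\beta} \neq 0 \,\}.
\]
Because $W(\alpha,\eta,\beta)$ is nonzero precisely when $(\alpha,\eta,\beta) \in C(\pi,\psi)$, only values of $\rho$ on $C(\pi,\psi)$ ever enter the right-hand side.

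With this formula in hand both directions become short. If $X$ and $Y$ agree on $C(\pi,\psi)$, then for every $f \in R(\phi)$ the maxima above are computed from identical data, so $\hat{f}_{\pi,\psi}(X) = \hat{f}_{\pi,\psi}(Y)$. Conversely, given $(\alpha,\eta,\beta) \in C(\pi,\psi)$ I would pick any nonzero homogeneous $f \in W(\alpha,\eta,\beta)$ (nonempty by the definition of $C(\pi,\psi)$) and evaluate: $\hat{f}_{\pi,\psi}(X)(f) = X(\alpha,\eta,\beta)$ and similarly $\hat{f}_{\pi,\psi}(Y)(f) = Y(\alpha,\eta,\beta)$, so equality of valuations forces $X(\alpha,\eta,\beta) = Y(\alpha,\eta,\beta)$.

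The one step requiring care is justifying the closed-form expression for $\hat{f}_{\pi,\psi}(\rho)(f)$ on a single homogeneous summand: one needs to rule out that a nonzero homogeneous $f \in W(\alpha,\eta,\beta)$ could have $\hat{f}_{\pi,\psi}(\rho)(f)$ strictly less than $\rho(\alpha,\eta,\beta)$. This is guaranteed by Theorem~\ref{branchfiltration}, which identifies the associated graded algebra with the domain $[R(\pi) \otimes R(\psi)]^{T_K}$; a nonzero homogeneous element remains nonzero in the associated graded, so no cancellation can depress its valuation below the naive weight. Combining this with the two directions above factors $\hat{f}_{\pi,\psi}$ as an injection through the quotient and identifies its image with $B(\psi,\pi)$.
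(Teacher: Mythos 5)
Your argument is correct and follows essentially the same route as the paper: the valuation attached to $\rho$ depends only on the values of $\rho$ at the triples $(\alpha,\eta,\beta)$ with $W(\alpha,\eta,\beta)\neq 0$, i.e. on $C(\pi,\psi)$, and conversely those values are recovered by evaluating on nonzero homogeneous elements. The only remark is that your final appeal to the domain property of the associated graded algebra is not really needed: since the filtration is defined directly from the weight-space decomposition, a nonzero homogeneous element of $W(\alpha,\eta,\beta)$ has valuation exactly $\rho(\alpha,\eta,\beta)$ by construction.
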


\begin{proof}
The lattice points in the cone $ C(\pi) \times_{\Delta_K} C(\psi)$ give exactly the triples $(\alpha, \beta, \gamma)$ such
that the graded component $W(\alpha, \eta, \beta) \subset R(\pi\circ \psi)$
are non-zero.  Since the filtrations defined by $\Delta_H^*\times \Delta_K^* \times \Delta_G^*$ are defined from the weight information
from these components, the proposition follows.
\end{proof}

By construction the filtration by factorization diagrams is $T_H \times T_G$ linear, so it passes
to subrings of $R(\phi)$ which are graded by a submonoid of the dominant weights in $\Delta_H \times \Delta_G.$  For example, the submonoid $\{0\} \times \Delta_G \subset \Delta_H \times \Delta_G$ corresponds to the invariant ring $R_G^H,$ given by the factorization 
diagrams where the $H$-weight is trivial. Every factorization  $\phi = \pi \circ \psi$ gives a filtration on $R_G^H$  with associated graded algebra $[R_K^H \otimes R(\pi)]^{T_K}.$  Note also that $[R(\phi) \otimes R(\pi)]^{T_K}$
has an action of $T_K,$ so the degeneration adds torus symmetries in accordance with the corresponding factorization.  We can extend
this construction to any finite factorization of a morphism $\phi:H \to G.$  
$$
\begin{CD}
H  @>\phi_1>> K_1 \ldots K_{n-1} @>\phi_n>> K_n\\
\end{CD}
$$

\noindent
We write $R(\phi)$ as a direct
sum of spaces of factorization diagrams as below, where $\lambda_i$ is a dominant weight of $K_i.$   This gives a filtration on $R(\phi)$ by the same argument as above, we define the spaces $W(\vec{\phi}, \vec{\lambda})$ as above.  

\begin{equation}
W(\vec{\phi}, \vec{\lambda}) = Hom_H(V(\lambda_0), V(\lambda_1)) \otimes \ldots \otimes Hom_{K_{n-1}}(V(\lambda_{n-1}), V(\lambda_n))\\
\end{equation}

We can now apply Proposition \ref{pval}, and Proposition \ref{reese} implies that coweight vectors which give positive integer values define flat degenerations of $R(\phi).$  Furthermore, when the coweight vector $\vec{\rho}$ is chosen to have strictly positive contributions from all coroots, then $\vec{\tau}_1 < \vec{\tau}_2$ as dominant weights implies $\vec{\rho}(\vec{\tau}_1) < \vec{\rho}(\vec{\tau}_2),$ so the flat degeneration is isomorphic to $[R(\phi_1)\otimes \ldots \otimes R(\phi_m)]^{T_1 \times \ldots \times T_m}.$

\subsection{The complex $K_{\phi}$}

We recall the degeneracy and face maps from the introduction.   

\begin{equation}
(d_k^i)^*:B(\phi_1, \ldots, \phi_i \circ \phi_{i-1}, \ldots \phi_k) \to B(\phi_1, \ldots, \phi_k)\\ 
\end{equation}

\begin{equation}
(d_k^i)^*: (\rho_0, \ldots, \rho_k) \to (\rho_0, \ldots, 0, \ldots, \rho_k)\\
\end{equation}

\begin{equation}
(s_k^i)^*: B(\phi_1, \ldots, \phi_i, id, \phi_{i+1}, \ldots, \phi_k) \to  B(\phi_1, \ldots, \phi_i, \phi_{i+1}, \ldots, \phi_k)\\ 
\end{equation}

\begin{equation}
(s_k^i)^*:(\rho_0, \ldots,\rho_i, \rho_{i+1},\ldots, \rho_k) \to (rho_0, \ldots, \rho_i + \rho_{i+1}, \ldots, \rho_k)\\
\end{equation}

\noindent
In order to prove Theorem \ref{valcom} we must show these maps are well-defined, and commute with the $f_{\vec{\phi}}: B(\vec{\phi}) \to \mathbb{V}_{\trop}(R(\phi)).$   To do this we define corresponding maps on the branching cones
$C(\vec{\phi}).$  We will see shortly that the operations of composing two consecutive morphisms
in a diagram corresponds to the following  degeneracy map on branching cones. Let $\Delta_{\vec{\phi}}$ be the product of Weyl chambers for the groups in the factorization defined by $\vec{\phi}.$
$$
\begin{CD}
\Delta_{\vec{\phi}} @>(d_k^i)_*>> \Delta_{d_k^i(\vec{\phi})}\\
@AAA @AAA\\
C(\vec{\phi}) @>(d_k^i)_*>> C(d_k^i(\vec{\phi}))\\
\end{CD}
$$
\begin{equation}
(d_k^i)_*(\lambda_0, \ldots, \lambda_k) = (\lambda_0, \ldots, \hat{\lambda_i}, \ldots, \lambda_k)\\
\end{equation}

\noindent
On branching diagrams, these are the operations of composing two adjacent morphisms and adding
in an identity morphism, respectively. Similarly, the operation of inserting an identity into a diagram of morphisms corresponds to the following
face map on branching cones. 

$$
\begin{CD}
\Delta_{\vec{\phi}} @>(s_k^i)_*>> \Delta_{s_k^i(\vec{\phi})}\\
@AAA @AAA\\
C(\vec{\phi}) @>(s_k^i)_*>> C(s_k^i(\vec{\phi}))\\
\end{CD}
$$
\begin{equation}
(s_k^i)_*(\lambda_0, \ldots, \lambda_k) = (\lambda_0, \ldots, \lambda_i, \lambda_i, \ldots, \lambda_k)\\
\end{equation}

\noindent
Since branching valuations are determined by the branching
cones by definition, showing that these maps are well-defined on the $B(\vec{\phi})$ and
commute with the maps $f_{\vec{\phi}}$ amounts to proving the following
formulas. For any $\vec{\lambda} \in \Delta_{\vec{\phi}}$ and $\vec{\rho} \in \Delta_{\vec{\phi}}^*,$

\begin{equation}
(d_k^i)^*(\vec{\rho})(\vec{\lambda}) =  \sum_{j \neq i} \rho_j(\lambda_j) + 0\lambda_i = \vec{\rho}((d_k^i)_*(\vec{\lambda}))\\
\end{equation}

\begin{equation}
(s_k^i)^*(\vec{\rho})(\vec{\lambda}) = \rho_1(\lambda_1) + \ldots + \rho_i(\lambda_i) + \rho_{i+1}(\lambda_i) + \ldots + \rho_k(\lambda_{k-1}) = \vec{\rho}((s_k^i)_*(\vec{\lambda}))\\
\end{equation}

\noindent
If $\vec{\rho}$ and $\vec{\rho}' \in \Delta_{d_k^i(\vec{\phi})}^*$ agree on all $\vec{\lambda} \in C(d_k^i(\vec{\phi}))$ 
then for any $\vec{\gamma} \in C(\vec{\phi})$ we have $\vec{\rho}((d_k^i)_*(\vec{\gamma}))= \vec{\rho}'((d_k^i)_*(\vec{\gamma}))$
and therefore $(d_k^i)^*(\vec{\rho})(\vec{\gamma}) = (d_k^i)^*(\vec{\rho}')(\vec{\gamma}).$  This shows that the map
$(d_k^i)^*: B(d_k^i(\vec{\phi})) \to B(\vec{\phi})$ is well-defined.  A similar argument proves that $(s_k^i)^*: B(s_k^i(\vec{\phi})) \to B(\vec{\phi})$
is well-defined.   Now we can check that the maps $(d_k^i)^*$ and $(s_k^i)^*$ commute with the maps $f_{\vec{\phi}}$ to $\mathbb{V}_{\trop}(R(\phi)).$
We have $R(\phi) = \bigoplus_{\vec{\lambda} \in C(\vec{\phi})} W(\vec{\phi}, \vec{\lambda}),$ as above.  The operation $d_k^i(\vec{\phi})$ collapses away the weights for $K_i,$ one checks that we get

\begin{equation}
W(d_k^i(\vec{\phi}), \vec{\gamma}) = \bigoplus_{\vec{\eta} \in C(\vec{\phi})| (d_k^i)_*(\vec{\eta}) = \vec{\gamma}} W(\vec{\phi}, \vec{\eta})\\
\end{equation}

\noindent
Any element $\rho \in \Delta_{d_k^i(\vec{\phi})}^*$ weights everything on the right hand side above the same, which matches
the way $(d_k^i)^*(\rho)$ weights the left hand side.  This shows that they define the same valuation.  Similarly, we have

\begin{equation}
W(s_k^i(\vec{\phi}), \vec{\gamma}) = \ldots \otimes Hom_{K_i}(V(\gamma_i), V(\gamma_{i+1})) \otimes Hom_{K_i}(V(\gamma_{i+1}), V(\gamma_{i+2})) \otimes \ldots \\
\end{equation}

\noindent
But this is only non-zero when $\gamma_{i+1} = \gamma_i,$ in which case $Hom_{K_i}(V(\gamma_i), V(\gamma_{i+1})) = \C,$ and we 
get $W(s_k^i(\vec{\phi}), \vec{\gamma}) = W(s_k^i(\vec{\phi}), (s_k^i)_*(\vec{\gamma}')) \cong W(\vec{\phi}, \vec{\gamma}')$
where $\vec{\gamma}$ has no $i$-th coweight, $(\gamma_1, \dots, \gamma_i, \gamma_{i+2}, \ldots, \gamma_{k+1}).$
The element $(s_k^i)^*(\rho_0, \ldots, \rho_i, \rho_{i+1}, \ldots, \rho_{k+1}) = (\rho_0, \ldots, \rho_i + \rho_{i+1}, \ldots, \rho_{k+1})$
has the same value on the elements of $W(\vec{\phi}, \vec{\gamma}')$ as $\vec{\rho}$ does on $ W(s_k^i(\vec{\phi}), (s_k^i)_*(\vec{\gamma}')) ,$
so they also define the same valuation.  Since $K_{\phi}$ is defined as the colimit of the $B(\vec{\phi})$ over the maps $(d_k^i)^*$ and $(s_k^i)^*,$ this proves Theorem \ref{valcom}.

\subsection{General $G-$algebras and $G-$actions on valuations}

We briefly describe how to construct a map from $K_{\phi}$ to $\mathbb{V}_{\trop}(A^H)$
for $\phi:H \to G$ and any $G-$algebra $A.$  We also discuss the action of $G$ on the complex $K_{i_G}$ for the map $i_G: 1 \to G.$   First we look at a generalization of Theorem \ref{valcom}.
   
\begin{theorem}
Let $A$ be a $G$-algebra, then there is a map $F_{\phi}:K_{\phi} \to \mathbb{V}_{\trop}(A^H).$
\end{theorem}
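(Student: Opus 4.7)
The plan is to mirror the proof of Theorem \ref{valcom} essentially step-by-step, with $R_G$ replaced by the $G$-algebra $A$ throughout. Decompose $A$ as a $G$-representation, $A = \bigoplus_{\lambda \in \Delta_G} A(\lambda) \otimes V(\lambda)$, where $A(\lambda) = Hom_G(V(\lambda), A)$ is the multiplicity space. For each factorization $\vec{\phi} = (\phi_1, \ldots, \phi_n)$ of $\phi$ through intermediate groups $K_1, \ldots, K_{n-1}$, iteratively restricting $V(\lambda)$ down the chain $K_n = G \supset K_{n-1} \supset \ldots$ and taking $H$-invariants produces a vector space decomposition
$$A^H \;=\; \bigoplus_{\vec{\lambda}} A(\lambda_n) \otimes W(\vec{\phi}, \vec{\lambda}),$$
where the $W(\vec{\phi}, \vec{\lambda})$ are the same factorization-diagram spaces already appearing in Theorem \ref{branchfiltration}.

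Next I would establish the analogue of Theorem \ref{branchfiltration} in this setting: the $G$-equivariant multiplication on $A$ restricts to a multiplication on $A^H$ that is filtered by the weight chain $\vec{\lambda}$, because at each intermediate $K_i$ the $K_i$-equivariant Cartan decomposition $V(\lambda^{(1)}) \otimes V(\lambda^{(2)}) = V(\lambda^{(1)} + \lambda^{(2)}) \oplus (\text{strictly lower weight})$ pushes any lower-weight contribution into the filtration, by exactly the argument given in Section \ref{branch} for the special case $A = R_G$. Assuming $A$ is a domain so that $\mathbb{V}_{\trop}(A^H)$ is populated, the associated graded algebra is obtained by retaining only the top Cartan component at each intermediate $K_i$-factor; this identifies with a $T_1 \times \ldots \times T_{n-1}$-invariant subalgebra of a tensor product built from $A$ and the $R(\phi_i)$, and remains a domain as a torus invariant of a tensor product of domains. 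Proposition \ref{pval} then furnishes, for every element of $\Delta_H^* \times \Delta_{K_1}^* \times \ldots \times \Delta_G^*$, a valuation on $A^H$.

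Because these valuations depend on the coweight functional only through its values on the support of the weight decomposition, and that support lies inside $C(\vec{\phi})$, they descend to a map $B(\vec{\phi}) \to \mathbb{V}_{\trop}(A^H)$ in direct analogy with Proposition \ref{valcon}. Compatibility with the face and degeneracy maps of the simplicial set underlying $K_\phi$ follows by the same bookkeeping as in Section \ref{branch}: composing two consecutive morphisms collapses a coordinate to $0$, inserting an identity forces $Hom_{K_i}(V(\gamma), V(\gamma)) = \C$ and thus identifies adjacent weights, and the coweight functionals transform dually. Passing to the colimit over all factorizations then yields $F_\phi : K_\phi \to \mathbb{V}_{\trop}(A^H)$. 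The main obstacle is the domain property of the associated graded required by Proposition \ref{pval}; this reduces to the standard facts that subalgebras, tensor products over $\C$, and torus invariants of domains remain domains, so once $A$ itself is assumed to be a domain the argument goes through essentially unchanged from the $R(\phi)$ case.
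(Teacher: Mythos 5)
Your overall strategy matches the paper's: decompose $A$ by $G$-isotypic components, push the branching filtration through the weight chain, and invoke Proposition~\ref{pval}. But there is a real gap in the step where you identify the associated graded and argue it is a domain. You assert that the associated graded ``identifies with a $T_1 \times \ldots \times T_{n-1}$-invariant subalgebra of a tensor product built from $A$ and the $R(\phi_i)$,'' and that it is a domain because torus invariants of tensor products of domains are domains. That identification is not automatic, and as written it is not quite right. Filtering $A^H$ by the full weight chain $(\lambda_1, \ldots, \lambda_n)$ degenerates the multiplication at the $G$-level as well: the product of two isotypic pieces $A(\lambda)\otimes V(\lambda)$ and $A(\eta)\otimes V(\eta)$ lands in all of $\bigoplus_{\mu \leq \lambda+\eta} A(\mu)\otimes V(\mu)$, so the degree-$\lambda_n$ piece of the associated graded carries a \emph{degenerated} product, not the multiplication of $A$ itself. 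The tensor-product description you need is therefore in terms of $A^{U_+}$ (the highest-weight vectors of $A$), not $A$, and the precise statement --- that the $G$-weight filtration of a $G$-algebra $A$ has associated graded algebra $[A^{U_+}\otimes R_G]^T$ --- is the theorem of Grosshans that the paper cites (\cite{G}, Ch.~3, Sec.~15). That is the single nontrivial external input that makes the domain argument, and hence Proposition~\ref{pval}, applicable.

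Once you insert that reference, the rest of the argument (the decomposition of $A^H$ into $A(\lambda_n)\otimes W(\vec{\phi},\vec{\lambda})$ with $\lambda_0 = 0$, descent to $B(\vec{\phi})$, compatibility with face and degeneracy maps, and passage to the colimit) tracks the paper's sketch; the paper phrases it as first degenerating $A$ via Grosshans and then applying the Section~\ref{branch} machinery to the $R_G$ factor, whereas you fuse the two filtrations into one, which is a harmless repackaging. Your flag about the domain hypothesis on $A$ is fair --- the paper is implicitly assuming it --- so making that explicit is fine.
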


\begin{corollary}\label{G-act}
There is a map $F_{i_G}:K_{i_G} \to \mathbb{V}_{\trop}(A)$ for any algebra $A$ with a $G$ action.  
\end{corollary}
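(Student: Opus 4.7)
The plan is to first establish the preceding Theorem in the same spirit as Theorem \ref{valcom}, and then deduce Corollary \ref{G-act} immediately as the special case $H = 1$, $\phi = i_G$, in which case $A^H = A$. The key change is that we replace $R(\phi)$ by $A^H$ throughout, and the role of the space $V(\lambda)$ in the construction of $R(\phi)$ is taken over by the $\lambda$-isotypic component of $A$.

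First I would decompose $A$ as a $G$-module, $A = \bigoplus_\lambda M_\lambda \otimes V(\lambda)$ with $M_\lambda = Hom_G(V(\lambda), A)$, so that $A^H = \bigoplus_\lambda M_\lambda \otimes V(\lambda)^H$. Given a factorization $\vec{\phi} = (\phi_1, \ldots, \phi_n)$ of $\phi$, I use the semisimplicity of each $Rep(K_i)$ to further decompose $V(\lambda)^H$ into factorization diagrams, as in the introduction, producing a direct sum decomposition
\begin{equation}
A^H = \bigoplus_{\vec{\tau}} W(\vec{\phi}, \vec{\tau}, A),
\end{equation}
where $W(\vec{\phi}, \vec{\tau}, A) = M_{\tau_n} \otimes Hom_H(\C, V(\tau_1)) \otimes \ldots \otimes Hom_{K_{n-1}}(V(\tau_{n-1}), V(\tau_n))$.

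The main step is to verify that the branching filtration property
\begin{equation}
W(\vec{\phi}, \vec{\tau}, A)\, W(\vec{\phi}, \vec{\sigma}, A) \subset \bigoplus_{\vec{\beta} \leq \vec{\tau} + \vec{\sigma}} W(\vec{\phi}, \vec{\beta}, A)
\end{equation}
continues to hold in this more general setting. The input is that for any $G$-algebra the product $M_\lambda \cdot M_\eta$ lands in $\bigoplus_{\nu \leq \lambda + \eta} M_\nu$, since the irreducibles appearing in $V(\lambda) \otimes V(\eta)$ are all bounded above by $\lambda + \eta$ in the dominance order. Inserting the decomposition for the intermediate groups, the argument is then essentially identical to the diagrammatic computation with Cartan products $C_*$ and $C^*$ used in Section \ref{branch} to establish the filtration for $R(\phi)$: the top weight component of the product is exactly the Cartan product at each intermediate level, all other components are strictly lower.

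Once the filtration is established, Proposition \ref{pval} supplies a cone $B(\vec{\phi}) \subset \mathbb{V}_{\trop}(A^H)$ for each factorization, and the verification that the degeneracy and face maps $(d_k^i)^*$, $(s_k^i)^*$ are well-defined and compatible with the maps $f_{\vec{\phi}}$ is identical to the proof of Theorem \ref{valcom} (it is purely a computation on the dominant weight labels and coweights, and does not involve the multiplicity spaces $M_\lambda$ at all). Passing to the colimit gives the desired map $F_\phi : K_\phi \to \mathbb{V}_{\trop}(A^H)$, and setting $H = 1$ yields the corollary. The main obstacle is the filtration step: even though the abstract picture is clear, one must carefully track how $G$-equivariance of the multiplication on $A$ interacts with the insertion of the intermediate groups $K_i$, making sure that the "lower order" corrections at each intermediate level remain lower in the product partial ordering on $\Delta_H \times \Delta_{K_1} \times \cdots \times \Delta_G$.
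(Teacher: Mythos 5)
Your proposal is correct and takes essentially the same route as the paper: the paper also extends the branching filtration of Section \ref{branch} to $A^H$ via the isotypic (Grosshans) decomposition $A = \bigoplus_{\lambda \in \Delta} Hom(V(\lambda), A)\otimes V(\lambda)$, and obtains the corollary as the special case $H = 1$, $\phi = i_G$. The only difference is presentational: the paper cites Grosshans for the lower-triangularity of multiplication on isotypic components and passes through $A^{U_H}$ and its multigraded subalgebras, whereas you verify the filtration property directly from $G$- (hence $K_i$-) equivariance of the multiplication.
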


This is shown with very similar methods employed thus far in this paper.  One follows Chapter $3,$ Section $15$ of \cite{G} and notes that $A$ always carries a direct sum decomposition as a vector space. 

\begin{equation}
A = \bigoplus_{\lambda \in \Delta} Hom(V(\lambda), A)\otimes V(\lambda)\\
\end{equation}

\noindent
Grosshans shows that this is a $G-$stable filtration of algebras, with associated graded algebra $[A^{U_+} \otimes R_G]^T,$ where $U_+$ is a maximal unipotent.
Now the filtrations and valuations on $R_G^{U_H}$ constructed in Section \ref{branch} can be extended to $A^{U_H}$ and its $\Delta_H \times \Delta_G$-multigraded subalgebras, in particular $A^H.$  Corollary \ref{G-act} is of particular interest because the valuations $\mathbb{V}_{\trop}(A)$ of a $G-$algebra $A$ carry an action by the group $G$ by pre-composition. 

\begin{equation}
[g\circ v](f) = v( g^{-1}(f))\\
\end{equation}

\begin{theorem}\label{Gaction}
For any $v \in \mathbb{V}_{\trop}(A)$ coming from $K_{i_G},$ and $g \in G,$ the valuation $g\circ v$ is also in the image of $K_{i_G}.$
\end{theorem}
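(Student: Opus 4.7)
The plan is to show that the action of $g \in G$ on the valuation $v$ amounts to transporting the factorization data defining $v$ by conjugation by $g$, and that the resulting data still represents a point of $K_{i_G}$.

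First, recall that a point of $K_{i_G}$ is specified by a factorization $\vec{\phi}: 1 \to K_1 \to \ldots \to K_m \to G$ together with a tuple $\vec{\rho} = (\rho_1, \ldots, \rho_m, \rho_G) \in \Delta_{K_1}^* \times \ldots \times \Delta_G^*$ representing a class in $B(\vec{\phi})$. The valuation $v = F_{i_G}(\vec{\phi}, \vec{\rho})$ is built by iterating the $G$-isotypic decomposition of $A$ down the chain: each $G$-irreducible $V(\lambda_G)$ appearing in $A$ is further decomposed as a $K_m$-module, each $K_m$-irreducible is decomposed as a $K_{m-1}$-module, and so on. On the resulting $\vec{\lambda}$-component $v$ takes the value $\rho_1(\lambda_1) + \ldots + \rho_G(\lambda_G)$, and on arbitrary elements it takes the tropical maximum over non-zero components.

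Next, I would define the conjugate factorization $g \cdot \vec{\phi}: 1 \to gK_1g^{-1} \to \ldots \to gK_mg^{-1} \to G$, equipped with the $g$-conjugated Borel and maximal torus choices in each intermediate group. Inner conjugation by $g$ gives an isomorphism of reductive groups $K_i \cong gK_ig^{-1}$ that identifies the Weyl chambers $\Delta_{K_i} \cong \Delta_{gK_ig^{-1}}$ together with their duals, so $\vec{\rho}$ transports to a well-defined element $g \cdot \vec{\rho} \in B(g \cdot \vec{\phi})$, giving a valuation $v' = F_{i_G}(g \cdot \vec{\phi}, g \cdot \vec{\rho})$.

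The heart of the argument is the identity $g \circ v = v'$. The $G$-action on $A$ acts only on the $V(\lambda_G)$ factor in the $G$-isotypic decomposition, and sends the $(K_i, \lambda_i)$-isotypic component of $V(\lambda_G)$ bijectively to the $(gK_ig^{-1}, \lambda_i)$-isotypic component, where the two labels $\lambda_i$ are identified through the conjugation isomorphism. Iterating this down the chain, an element $f \in A$ has a non-zero component labelled $\vec{\lambda}$ in the $g\cdot \vec{\phi}$-decomposition if and only if $g^{-1}f$ has a non-zero component labelled $\vec{\lambda}$ in the $\vec{\phi}$-decomposition, and the weight sums computing $v'(f)$ and $v(g^{-1}f)$ then agree term by term. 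Finally, the conjugated factorization is itself a factorization of $i_G$ through reductive groups, hence a simplex of $\mathcal{K}_{i_G}$, provided we take our small category of reductive groups to be closed under inner conjugation (which we may assume without loss of generality, since conjugate chains are equivalent in the category of reductive groups). Thus $v' \in \mathrm{image}(F_{i_G})$. The main obstacle is purely bookkeeping: verifying that $\vec{\rho} \mapsto g \cdot \vec{\rho}$ is compatible with the face and degeneracy maps used to glue the cones $B(\vec{\phi})$ into $K_{i_G}$, and that the intermediate Borel choices match up so that the weights $\lambda_i$ on the two sides of the conjugation are identified unambiguously.
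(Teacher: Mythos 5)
Your proposal is correct and follows essentially the same route as the paper: both transport the chain's isotypic decompositions by the $G$-action on $A$ and verify the key identity $v'(gf)=v(f)$, so that $g\circ v$ is realized by a conjugated factorization with the same coweight data. The only cosmetic difference is that the paper conjugates just the final morphism ($\phi_m \mapsto g\phi_m g^{-1}$), keeping the intermediate groups and coweights literally unchanged, which sidesteps the Weyl-chamber identifications and the compatibility bookkeeping you mention (neither of which is actually needed, since one only has to exhibit $g\circ v$ as the image of a single cone $B(\vec{\phi})$).
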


\begin{proof}
Let $v$ be defined by the coweights $(\rho_1, \ldots, \rho_m)$ on a chain of reductive
groups. 

$$
\begin{CD}
1 @>i_{K_1}>> K_1 @>\phi_1>> \ldots @>\phi_m>> G\\
\end{CD}
$$

\noindent
We define a new valuation $v_g$ by taking the same co-weights, and conjugating
$\phi_m$ by $g.$

$$
\begin{CD}
1 @>i_{K_1}>> K_1 @>\phi_1>> \ldots @>g\phi_m g^{-1}>> G\\
\end{CD}
$$

\noindent
We show that $g\circ v = v_g.$  For $K_i$ in this chain, the map $\phi_i\circ \ldots \circ g \phi_m g^{-1}$ is equal to $g[\phi_i \circ \ldots \circ \phi_m]g^{-1}.$  We consider the branching of an irreducible representation viewed as a subspace of $A$ with respect to this map. 

\begin{equation}
V(\lambda) = \bigoplus Hom_{K_i}(V(\eta), V(\lambda))\otimes V(\eta)\\
\end{equation}

\noindent
The element $g$ defines an isomorphism of $K_i$ modules $i_g: A \to gA,$ where $K_i$
acts on $A$ through $\phi_1 \circ \ldots \circ \phi_m$ and on $gA$ through $g\phi_1 \circ \ldots \circ \phi_m g^{-1}.$  Under this isomorphism, $gV(\lambda)$ decomposes as
$\bigoplus g Hom_{K_i}(V(\eta), V(\lambda))\otimes V(\eta).$  This implies the following equation.

\begin{equation}
v_g(g\circ f) = v(f)\\
\end{equation}

\noindent
This implies that $[g^{-1} \circ v_g](f) = v(f),$ so $v_g(f) = [g\circ v](f).$
\end{proof}

A consequence of this proof is that the isomorphism type of each part of the chain is preserved by the $G-$action as well.  In this way, the space $K_{i_G}$ decomposes into chambers depending on these isomorphism types.   

\begin{corollary}
Let $v:A \to \trop$ be a valuation from by the branching cone defined by the chain 

\begin{equation}
1 \to T \to G,\\
\end{equation}

then $g\circ v$ is a valuation from the branching cone defined by the chain

\begin{equation}
1 \to gTg^{-1} \to G.\\
\end{equation}
\end{corollary}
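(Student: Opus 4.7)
The plan is to derive this as a direct specialization of Theorem \ref{Gaction} to the two-step chain $1 \to T \to G$, combined with a reinterpretation of the conjugated morphism. Since the corollary spells out the statement of Theorem \ref{Gaction} only in a special case, most of the work has already been done; what remains is to translate the abstract assertion ``the last map is replaced by $g\phi_m g^{-1}$'' into the geometric assertion ``the intermediate group is replaced by $gTg^{-1}$.''

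First I would apply Theorem \ref{Gaction} to the chain $1 \to T \xrightarrow{i_T} G$, where $i_T$ denotes the inclusion. The theorem produces a valuation $v_g$ attached to the chain $1 \to T \to G$ whose last arrow is $i_T$ conjugated by $g$, that is, the homomorphism $c_g : T \to G$ defined by $c_g(t) = g t g^{-1}$, and it establishes the key identity $g \circ v = v_g$. Thus $g \circ v$ lies in the image of $K_{i_G}$, attached to this particular chain.

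Next I would factor the conjugated map $c_g$ through its image: writing $c_g = \iota \circ \tilde c_g$ where $\tilde c_g : T \xrightarrow{\cong} gTg^{-1}$ is an isomorphism of tori and $\iota : gTg^{-1} \hookrightarrow G$ is the natural inclusion, one obtains a refined factorization $1 \to T \xrightarrow{\tilde c_g} gTg^{-1} \xrightarrow{\iota} G$. Because $\tilde c_g$ is an isomorphism, every irreducible representation of $gTg^{-1}$ restricts to an irreducible representation of $T$ of the corresponding weight, so the fibre product defining $C(1 \to T \to G \text{ via } c_g)$ collapses onto $C(1 \to gTg^{-1} \to G)$ under the pushforward of coweights by $\tilde c_g^*$. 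This is the same degeneracy relation expressed in the definition of $\mathcal{K}_\phi$, so the valuation $v_g$ is identified with the branching valuation on the chain $1 \to gTg^{-1} \to G$, with the chosen coweight on $T$ transported along the dual isomorphism to a coweight on $gTg^{-1}$.

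The only obstacle is verifying that this reinterpretation really gives a point of $K_{i_G}$ (and not just of the abstract simplicial set $\mathcal{K}_{i_G}$), but this is immediate because the gluing maps described in Section \ref{branch} are exactly set up to absorb an intervening isomorphism of reductive groups into the relabeling of coweights. Combining the two steps gives $g \circ v = v_g$ as a valuation in the branching cone of $1 \to gTg^{-1} \to G$, which is the desired conclusion.
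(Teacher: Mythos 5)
Your proposal is correct and follows essentially the paper's own route: the corollary is treated as a direct specialization of Theorem \ref{Gaction} to the chain $1 \to T \to G$, with the conjugated inclusion $t \mapsto gtg^{-1}$ reinterpreted as the inclusion of the subgroup $gTg^{-1}$ together with the transported coweight. Your extra step making explicit that the isomorphism $T \cong gTg^{-1}$ merely relabels the branching data (and is absorbed by the equivalences built into $K_{i_G}$) is exactly the content the paper leaves implicit in the remark preceding the corollary.
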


A valuation $v$ as above is a choice $\chi \in Hom(\C^*, T)\otimes \R$ which is then evaluated on the weights of $A$ via its eigen-decomposition.   Since any two maximal tori of 
$G$ are conjugate, this shows that the set of all branching valuations coming from maximal tori is covered by $G$-translates of those coming from a fixed maximal torus.  Similarly, the set of all branching valuations coming from a chain of Levi subgroups is covered by $G$-translates of the set of branching valuations coming from chains of Levi subgroups which share a common maximal torus, $H_T(G)$.

\begin{remark}
It is straightforward to show that $H_T(G)$ is a cone over the spherical building of $G.$
\end{remark}

\section{Degenerations and the dual canonical basis}

In this section we will give a brief overview of the labeling of the dual canonical basis
$B \subset R_G,$ by the string parameters associated to a decomposition $\bold{i} \in R(w_0)$ of the longest word in the Weyl group of $G.$   To begin, we assume that $G$ is simple.  Lusztig \cite{Lu} (also Kashiwara \cite{Ka}) constructed a canonically defined
basis $\mathbb{B}$ of a certain subalgebra $\mathcal{U}^+$ of the quantized universal enveloping algebra of the Lie algebra of $G,$ $\mathcal{U}_q(\mathfrak{g}).$
Specialization this basis at $q = 1$ yields a basis for each irreducible representation $V(\lambda)$ of $G,$ we denote the dual of this basis in $V(\lambda^*)$ by $B(\lambda^*).$

\subsection{String parameters}

  In \cite{BZ1}, Berenstein and Zelevinsky studied the parameterization of this basis by the string parameters associated to $\bold{i},$ which identifies $B(\lambda)$ with a subset of $\Z_{\geq 0}^N \subset \R^N.$  After summing over all $\lambda \in \Delta,$ there is a bijection of $B = \coprod_{\lambda \in \Delta} B(\lambda)$ with the lattice points in a rational cone $C(\bold{i}) \subset \Delta \times \Z_{\geq 0}^N.$  In order to describe
this cone we must first define the system for indexing weights in representations invented by Berenstein and Zelevinksy \cite{BZ1}
called $\bold{i}-$trails.  An $\bold{i}-$trail from a weight $\gamma$ to a weight $\eta$ in the weight polytope of a representation
$V$ is a sequence of weights $(\gamma, \gamma_1, \ldots, \gamma_{\ell-1}, \eta),$ such that consecutive differences of weights
are integer multiples of simple roots from $\bold{i}$, $\gamma_i - \gamma_{i+1} = c_k \alpha_{i_k},$ and the application 
of the raising operators $e_{i_1}^{c_1} \circ \ldots \circ e_{i_{\ell}}^{c_{\ell}}: V_{\eta} \to V_{\gamma}$ is non-zero. 
For any $\bold{i}-$trail $\pi,$ Berenstein and Zelevinsky define  $d_k(\pi) = \frac{1}{2}H_{\alpha_{i_k}}(\gamma_{k-1} + \gamma_k).$ 
In what follows, the entries of the Cartan matrix $A$ are denoted $a_{ij}.$

\begin{definition}
The cone $C(\bold{i}) \subset \Delta \times \Z_{\geq 0}^N$ is the set of $(\lambda, \vec{t})$ defined by the following inequalities. 
\begin{enumerate}
\item $\sum_k d_k(\pi) t_k \geq 0$ for any $\bold{i}-$trail $\omega_i \to w_0s_i\omega_i$ in $V(\omega_i),$ for all fundamental weights $\omega_j$ of the dual Langlands group.\\   
\item $t_k \leq H_{\alpha_{i_k}}(\lambda) -\sum_{\ell = k+1}^N a_{i_{\ell}, i_k} t_{\ell}$ for $k = 1, \ldots, N.$\\
\end{enumerate}
\end{definition}  

After specializing the $\Delta$ component at a dominant weight, one obtains the string parameters which index the elements of the dual canonical basis in $V(\lambda).$  For the algebra 
$R_G = \bigoplus_{\lambda \in \Delta} V(\lambda)$ the $\Delta$ parameter on the dual canonical basis corresponds to the grading by the right $T$-action.  The left $T$-action gives a grading on $R_G$ where the dual canonical basis element $b_{\lambda, \vec {s}}$ has weight $(\sum s_i\alpha_i) - \lambda.$  Elements of $B$ can then be ordered first by dominant weight, with ties broken by the lexicographic order on the string parameters.  Recall that Caldero \cite{C} has shown multiplication in $R_G$ to be lower triangular with respect to this ordering, see also \cite{AB}. 
 
\begin{equation}
b_{\lambda, \vec{s}} \times b_{\eta, \vec{t}} = b_{\lambda + \eta, \vec{s} + \vec{t}} + \sum_{\vec{\ell} < \vec{s} + \vec{t} } C_{\lambda+ \eta, \vec{\ell}}b_{\lambda + \eta, \vec{\ell}}\\ 
\end{equation}

\noindent
As a result Caldero obtains the following theorem, the associated graded algebra
is defined in the sense of Proposition \ref{pval}.

\begin{theorem}
The algebra $R_G$ has a filtration with associated graded ring isomorphic to $\C[C(\bold{i})].$
\end{theorem}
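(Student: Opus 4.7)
The plan is to deduce this result from Caldero's lower triangular multiplication formula together with the machinery set up in Section \ref{trop}, in particular Propositions \ref{pval}, \ref{psubduct}, and \ref{reese}.

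First, I would fix the partial ordering on the indexing set $C(\bold{i})\cap(\Delta\times\Z_{\geq 0}^N)$: declare $(\eta,\vec{t})<(\lambda,\vec{s})$ when either $\eta<\lambda$ in the dominant weight order, or $\eta=\lambda$ and $\vec{t}<\vec{s}$ lexicographically. This decorates the monoid $C := C(\bold{i})\cap(\Delta\times\Z_{\geq 0}^N)$ with a partial order compatible with addition, since both the dominant weight cone and the lex order on $\Z_{\geq 0}^N$ are translation invariant. The $\C$-linear span decomposition $R_G=\bigoplus_{(\lambda,\vec{s})\in C}\C b_{\lambda,\vec{s}}$ then puts $R_G$ in the position to apply Proposition \ref{pval}: Caldero's formula, repeated from the excerpt just above, is exactly the assertion that
\begin{equation}
A_{(\lambda_1,\vec{s}_1)}\cdot A_{(\lambda_2,\vec{s}_2)}\subset \bigoplus_{(\beta,\vec{\ell})\leq(\lambda_1+\lambda_2,\vec{s}_1+\vec{s}_2)} A_{(\beta,\vec{\ell})}.
\end{equation}
So $R_G$ is filtered by the partially ordered monoid $C$ in the sense of Section \ref{trop}.

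Next I would identify the associated graded algebra $\gr_{<}(R_G)$. By the definition of the graded multiplication, the product of $b_{\lambda,\vec{s}}$ and $b_{\eta,\vec{t}}$ in $\gr_{<}(R_G)$ is obtained by projecting onto the top graded component, which by Caldero's formula is $b_{\lambda+\eta,\vec{s}+\vec{t}}$. To identify this with the semigroup algebra $\C[C(\bold{i})]$ I need that $C$ is closed under addition and that the leading term is non-zero; the first follows because $C(\bold{i})$ is a rational polyhedral cone whose lattice points parametrize $B$, and the second because the top component of the Cartan product of two highest-weight vectors is itself a nonzero highest-weight vector. Thus $\gr_{<}(R_G)\cong\C[C(\bold{i})]$ as a $\Delta\times\Z^N$-graded $\C$-algebra.

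Finally, to realize this partial-order-filtration as a genuine valuation (and hence a flat degeneration in the sense of Proposition \ref{reese}), I would invoke Proposition \ref{pval} by producing an additive map $e:\Z\Delta\times\Z^N\to\trop$ which is strictly monotone on the partial order. A convenient choice is to first weight by any $\rho$ in the strict interior of the dual Weyl chamber, breaking remaining ties by a sufficiently dominant integer vector detecting the lex order; scaling by an integer makes $e$ take integer values. Since $\gr_{<}(R_G)\cong\C[C(\bold{i})]$ is a domain, Proposition \ref{pval} (applied coordinate by coordinate, or in its higher-rank form corresponding to the SAGBI example) promotes $e$ to a $\trop$-valued valuation on $R_G$ whose associated graded algebra coincides with $\gr_{<}(R_G)\cong\C[C(\bold{i})]$, and Proposition \ref{psubduct} shows that $B$ itself is a subductive set. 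The main obstacle I expect is verifying the strict monotonicity of $e$ on the lex part of the ordering while keeping the dominant-weight part strictly respected; this is resolved by choosing the weights in $e$ to lie in very different scales, so that the dominant-weight contribution dwarfs the string-parameter contribution, a standard trick for lex-to-linear reductions.
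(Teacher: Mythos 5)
Your proof is correct and follows essentially the same route the paper takes: the paper simply cites Caldero's lower-triangularity formula and notes that the associated graded ring is taken in the sense of Proposition \ref{pval}, while your proposal fills in the standard details (the partial order on $C(\bold{i})$, identification of $\gr_{<}(R_G)$ with the semigroup algebra, and the linearization of the partial order). One minor imprecision: the nonvanishing of the leading term $b_{\lambda+\eta,\vec{s}+\vec{t}}$ follows directly from Caldero's formula having leading coefficient $1$ together with the fact that $(\lambda+\eta,\vec{s}+\vec{t})\in C(\bold{i})$ because $C(\bold{i})$ is a cone, not from the Cartan product of highest-weight vectors being nonzero as you wrote.
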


\subsection{Bases for $R(\delta_2)$ and $R(i_{L, G})$}

The dual canonical basis $B(\lambda) \subset V(\lambda)$ is known to be a
good basis, see \cite{BZ2}, \cite{Mat}, \cite{Lu}.  This means
that its restriction to certain subspaces $V_{\eta, \beta}(\lambda) \subset V(\lambda)$ 
and $V_{I, \eta}(\lambda) \subset V(\lambda)$ is still a basis.  Here all $\chi, \beta, \lambda, \eta$ are weights, and $I$ is a collection of simple roots.  Let $e_i$ be the raising operator
in the lie algebra $\mathfrak{g}$ of $G.$

\begin{equation}
 V_{\chi, \alpha}(\lambda) = \{ v | t\circ v = \chi(t)v,  e_i^{H_i(\alpha) + 1}v = 0\} \subset V(\lambda).\\
\end{equation}

\begin{equation}
V_{I, \eta}(\lambda) = \{ v | t\circ v = \eta(t)v,  e_iv = 0, i \in I\}\subset V(\lambda).\\
\end{equation}

Notice that the $T$-weight spaces of 
$V(\lambda)$ are realized as a special case $I = \emptyset$ of the second definition.  The space $V_{I, \eta}(\lambda)$ is 
by definition the $U_L$-fixed points of weight $\eta$ for $U_L \subset L$ the Levi subgroup corresponding to $I$
the chosen subset of roots, which means we have an identification. 

\begin{equation}
V_{I, \eta}(\lambda) \cong Hom_L(V(\eta), V(\lambda))\\
\end{equation}

\noindent
Here $\eta$ is viewed as a dominant weight for $L.$
When we fix $\chi = \mu - \beta$ in the first definition, for $\mu$ and $\beta$ dominant
weights there is an identification,

\begin{equation}
V_{\mu - \beta, \beta}(\lambda) \cong Hom_G(V(\mu), V(\beta) \otimes V(\gamma)).\\
\end{equation}

\noindent
The string parameterizations allow Berenstein and Zelevinsky to produce 
the following polyhedral descriptions of the elements of the dual canonical basis 
in these multiplicity spaces. 

\begin{theorem}[Berenstein, Zelevinksy, \cite{BZ1}]\label{BZ3}
The set of dual canonical basis members which span $V_{\mu - \beta, \beta}(\lambda) \subset V(\lambda)$
are indexed by the points in $\Z_{\geq 0}^N$ such that the following hold. 

\begin{enumerate}
\item $\sum_k d_k(\pi)t_k \geq 0$ for any $\bold{i}-$trail
from $\omega_j$ to $w_0 s_j\omega_j$ in $V(\omega_j),$ for all fundamental weights $\omega_j$ of the dual Langlands group.\\

\item $-\sum_k t_k \alpha_k + \lambda  + \beta = \mu$\\

\item $\sum_k d_k(\pi) t_k \geq H_{\alpha_j}(\beta)$  for any $\bold{i}-$trail
from $s_j\omega_j$ to $w_0\omega_j$ in $V(\omega_j),$  for all fundamental weights $\omega_j$ of the dual Langlands group.\\

\item $t_k + \sum_{j > k} a_{i_k, i_j} t_j \geq H_{\alpha_{i_k}}(\lambda)$\\
\end{enumerate}

\end{theorem}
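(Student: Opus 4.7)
The plan is to assemble the description by combining three inputs: the good-basis property of $\mathbb{B}$, the string parametrization of $B(\lambda)$ by Berenstein--Zelevinsky, and the explicit translation of annihilation conditions into $\bold{i}$-trail inequalities.

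I first recall that $V_{\mu - \beta, \beta}(\lambda)$ is carved out of $V(\lambda)$ by two conditions: $(a)$ the $T$-weight must equal $\mu - \beta$, and $(b)$ the vector must be annihilated by $e_i^{H_i(\beta) + 1}$ for every simple root $\alpha_i$. Because $\mathbb{B}$ specializes to a \emph{good basis} of $V(\lambda)$ (in the sense of Lusztig and Kashiwara), its intersection with any such common kernel remains a basis. It therefore suffices to determine which $b_{\lambda, \vec{t}} \in B(\lambda)$ satisfy $(a)$ and $(b)$, and to encode these conditions polyhedrally in $\vec{t}$.

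Condition $(2)$ is the direct translation of $(a)$: since $b_{\lambda, \vec{t}}$ lies in the $T$-weight space of weight $\lambda - \sum_k t_k \alpha_{i_k}$, setting this equal to $\mu - \beta$ yields $-\sum_k t_k \alpha_k + \lambda + \beta = \mu$. Conditions $(1)$ and $(4)$ come from the parametrization of $B(\lambda)$ as the lattice points in the fiber of $C(\bold{i})$ over the fixed dominant weight $\lambda$: the trail inequalities $(1)$ are intrinsic to the string cone, while $(4)$ arises from the second defining family of $C(\bold{i})$ once $\lambda$ is specialized and the indexing is rearranged in the direction relevant to the $e_j$-annihilations that cut out $V(\lambda)$ itself inside a larger weight module.

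The crux is condition $(3)$, which encodes annihilation by a specific power of each raising operator, and here lies the main obstacle. The argument is the central tropical computation of \cite{BZ1}: using the geometric crystal structure on the open Bruhat cell of the Langlands dual group and its tropicalization, one shows that for each simple index $j$ and each $\bold{i}$-trail $\pi$ from $s_j \omega_j$ to $w_0 \omega_j$ inside the fundamental representation $V(\omega_j)$, the linear functional $\vec{t} \mapsto \sum_k d_k(\pi) t_k$ measures the largest power of $e_j$ that fails to annihilate $b_{\lambda, \vec{t}}$. Annihilation by $e_j^{H_j(\beta)+1}$ is then equivalent to the lower bounds $\sum_k d_k(\pi) t_k \geq H_{\alpha_j}(\beta)$ ranging over all such trails. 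Proving this equivalence is the delicate step: one inducts on the reduced word $\bold{i}$, using the Lusztig parametrization of $\mathcal{U}^+$ to reduce to a base case in which the raising-operator action on the string coordinates becomes a piecewise-linear tropical formula that is matched term-by-term with the trail-theoretic expression. Once $(3)$ is in hand, the four conditions together cut out exactly the lattice points parametrizing the portion of $B(\lambda)$ spanning $V_{\mu - \beta, \beta}(\lambda)$.
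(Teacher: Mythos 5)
The paper does not actually prove this statement: it is a quoted result, attributed directly to Berenstein--Zelevinsky \cite{BZ1}, and the only thing the paper supplies after the theorem is a one-sentence gloss on what the four conditions mean (``The first and last conditions say that $(\lambda, \vec{t})$ is a member of $C(\bold{i})$ in the fiber over the weight $\lambda$, the second condition says that the basis members lie in the weight $\mu-\beta$ subspace of $V(\lambda)$, and the third condition says that the appropriate raising operators annihilate the basis members''). Your reading of the four conditions reproduces this gloss exactly: condition (2) as the weight constraint, conditions (1) and (4) as membership in the string cone $C(\bold{i})$ over $\lambda$, and condition (3) as the raising-operator annihilation cut-out. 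So your interpretation is consistent with everything the paper says; you are not contradicting the paper, you are filling in a proof the paper deliberately outsources.

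One caveat about the only substantive part of your sketch, namely condition (3). You assert that the functional $\vec{t} \mapsto \sum_k d_k(\pi) t_k$ ``measures the largest power of $e_j$ that fails to annihilate $b_{\lambda, \vec{t}}$.'' Taken literally, that would make the desired constraint $e_j^{H_j(\beta)+1} v = 0$ an \emph{upper} bound on $\sum_k d_k(\pi) t_k$, whereas the theorem imposes a \emph{lower} bound $\sum_k d_k(\pi) t_k \geq H_{\alpha_j}(\beta)$ ranging over all $\bold{i}$-trails from $s_j\omega_j$ to $w_0\omega_j$. The actual Berenstein--Zelevinsky statement expresses the maximal unannihilated power as a \emph{minimum} over trails of these linear forms (a tropical expression), so the annihilation bound becomes a simultaneous lower bound over the trail family rather than a single upper bound. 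Your phrasing collapses the min-over-trails into one functional, which reverses the direction of the inequality. The overall structure (good basis plus string parametrization plus tropical trail inequalities) is the right reconstruction of the \cite{BZ1} argument, but that one sentence would need to be corrected before the sketch actually matches the inequality in the theorem.
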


\noindent
The first and last conditions say that $(\lambda, \vec{t})$ is a member
of $C(\bold{i})$ in the fiber over the weight $\lambda,$ the second condition
says that the basis members lie in the weight $\mu-\beta$ subspace of $V(\lambda),$
and the third condition says that the appropriate raising operators annihilate the basis members.

Berenstein and Zelevinsky state this for a general semi-simple group, but it extends to
a general reductive group as follows.  The weights that define a non-zero
 $Hom_G(V(\mu), V(\lambda)\otimes V(\beta))$ are of the form $\mu' + \tau_1,$
$\beta' + \tau_2$ and $\lambda' + \tau_3$ where $\tau_i$ are characters of the center
of $G$ with $\tau_1 = \tau_2 + \tau_3$ and $\mu', \beta', \lambda'$ are dominant weights
of the semisimple part of $G.$  The subspace $V_{\mu - \beta, \beta}(\lambda)$ is the
same as the subspace $V_{\mu' - \beta' + (\tau_1 - \tau_2), \beta'}(\lambda' + \tau_3) = V_{\mu' -\beta' + \tau_3, \beta'}(\lambda' + \tau_3) = V_{\mu' - \beta', \beta'}(\lambda')\otimes \C\tau_3.$  So this space inherits the subset of the dual canonical
basis of the semi-simple part of $G$ coming from $V_{\mu' - \beta', \beta'}(\lambda')$ tensored with the character $\tau_3.$ From now on we label the dual canonical basis members in this space with the triple of dominant weights $b_{\lambda, \mu, \beta, \vec{s}} \in V_{\mu -\beta, \beta}(\lambda).$

For the branching multiplicities over $L,$ any string $\bold{i}$
of the dual canonical basis gives a parametrization of the points in $V_{\eta,I}(\lambda),$
however in special cases there is a polyhedral description.  Following \cite{BZ1}, we choose $\bold{i}$ to be a concatenation of factorizations $\bold{i}_1$ of $w_0(I),$ the longest word in the parabolic subgroup of $W$ corresponding to $I,$ and $\bold{i}_2,$
a factorization of $w_0(I)^{-1}w_0.$  In this case, the projection of $C(\bold{i})$ onto the $\Z_{\geq 0}^N$ the factor splits as a product of string cones, see Theorem 3.11 of \cite{BZ1}.

\begin{theorem}[Berenstein, Zelevinksy, \cite{BZ1}]\label{BZL}
The set of dual canonical basis members which span $V_{\eta}(\lambda) \subset V(\lambda)$
are $\bold{i}_1 \circ \bold{i}_2 = \bold{i}$- parametrized by the points in $\Z^N$ with the first $N- \ell(w_0(I)^{-1}w_0)$ coordinates equal to zero, such that the following hold. 

\begin{enumerate}
\item $\sum_{k >N- \ell(w_0(I)^{-1}w_0)} d_k(\pi)t_k \geq 0$ for any $\bold{i}_2-$trail
from $w_0(I)\omega_j$ to $w_0 s_j\omega_j$ in $V(\omega_j),$  for all fundamental weights $\omega_j$ of the dual Langlands group.\\

\item $\sum_{k >N- \ell(w_0(I)^{-1}w_0)} t_k \alpha_k = \lambda - \eta$\\

\item $t_k + \sum_{j > k} a_{i_k, i_j} t_j \geq H_{\alpha_{i_k}}(\lambda)$\\
\end{enumerate}

\end{theorem}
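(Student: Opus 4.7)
The plan is to derive this polyhedral description by restricting the Berenstein--Zelevinsky parametrization of $C(\bold{i})$ to the subset of basis vectors lying in $V_{I,\eta}(\lambda)$, using the concatenation $\bold{i} = \bold{i}_1 \circ \bold{i}_2$ to decouple the inequalities. The key tool is Theorem 3.11 of \cite{BZ1}, which establishes the product decomposition of $C(\bold{i})$ associated to this kind of concatenation.

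First I would translate the $U_L$-invariance condition $e_i v = 0$ for $i \in I$ into a condition on string parameters. Using the crystal-theoretic interpretation of $(t_k)$ as recording successive Kashiwara lowering operators along the word $\bold{i}$, together with the fact that $\bold{i}_1$ consists entirely of simple roots of $L$, one sees that $b_{\lambda,\vec{t}}$ is a highest weight vector for $L$ if and only if $t_k = 0$ for every $k$ in the first block, i.e.\ $k \leq N - \ell(w_0(I)^{-1}w_0)$. This yields the vanishing condition on the first block of coordinates appearing in the statement.

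Next I would impose the $T$-weight condition. The weight of $b_{\lambda,\vec{t}}$ is $\lambda - \sum_k t_k \alpha_{i_k}$, which under the vanishing of the first block reduces to $\lambda - \sum_{k > N-\ell(w_0(I)^{-1}w_0)} t_k \alpha_{i_k}$; setting this equal to $\eta$ gives condition (2). Condition (3) is the inequality from the original presentation of $C(\bold{i})$ inherited by the surviving coordinates, since the inequalities indexed by $k$ in the first block are automatically satisfied once those coordinates vanish.

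The main obstacle, and the content of condition (1), is identifying exactly which of the original $\bold{i}$-trail inequalities survive after the first block of coordinates is set to zero. This is precisely what Theorem 3.11 of \cite{BZ1} controls: for the concatenated word $\bold{i} = \bold{i}_1 \circ \bold{i}_2$, every $\bold{i}$-trail from $\omega_j$ to $w_0 s_j \omega_j$ in $V(\omega_j)$ splits into an $\bold{i}_1$-piece (transporting $\omega_j$ to $w_0(I)\omega_j$) followed by an $\bold{i}_2$-piece, and once the $\bold{i}_1$-block of coordinates vanishes the only nontrivial constraints come from $\bold{i}_2$-trails from $w_0(I)\omega_j$ to $w_0 s_j \omega_j$. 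Granting this decoupling, the remainder of the argument is bookkeeping: collating the inherited inequalities with the vanishing and weight constraints yields exactly the polyhedral description claimed.
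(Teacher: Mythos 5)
This theorem is not proved in the paper: it is quoted directly from Berenstein--Zelevinsky \cite{BZ1}, the only supporting remarks being the choice of the adapted word $\bold{i}=\bold{i}_1\circ\bold{i}_2$ and the splitting of the string cone provided by Theorem 3.11 of \cite{BZ1}. Your sketch---vanishing of the $w_0(I)$-block of string coordinates as the translation of $U_L$-invariance, the weight equation giving condition (2), inheritance of the string-cone inequalities for condition (3), and deferring the identification of the surviving $\bold{i}_2$-trail inequalities to Theorem 3.11 of \cite{BZ1}---reconstructs essentially this same route, so it is consistent with the paper's treatment.
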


We label the basis members in this subspace $b_{\eta, \lambda, \vec{t}}.$
The purpose of the rest of this section is to define the two cones $C_3(\bold{i})$ and $C_L(\bold{i})$ 
which have cross-sections equal to the above polytopes and index bases $B_3(\bold{i})$
and $B_L(\bold{i})$ in the two branching algebras $R(\delta_2)$ and $R(i_{L,G}).$ In order to do so, we will relate both algebras to $R_G.$ Let $U_-$ and $U_+$ be the maximal 
unipotent subgroups of $G$ associated to the negative and positive simple roots.
Following Zhelobenko, Chapter XVIII, page 383 of \cite{Zh}, we have the following commutative diagram.

$$
\begin{CD}
T\times G/U_+ @>>> G/U_+ \times G/U_+ @>>> U_{-} \ql [G/U_+ \times G/U_+]\\
@AAA @AAA @AAA \\
T \times U_{-}\times T @>>> U_{-} \times T \times U_{-} \times T @>>> U_{-} \ql [U_{-} \times T \times U_{-} \times T]\\
\end{CD}
$$

\noindent
The bottom row of this diagram is dense in the top row. An invariant function $f \in \C[G/U_+ \times G/U_+]^{U_{-}}$ satisfies

\begin{equation}
f(u_1t_1, u_2t_2) = f(t_1, u_1^{-1}u_2t_2),\\
\end{equation}

\noindent
so we can realize $\C[G/U_+ \times G/U_+]^{U_{-}}$ as a subalgebra of $R(G)\otimes \C[T].$ 
The algebra $\C[G/U_+ \times G/U_+]^{U_{-}}$ is isomorphic to $\C[G/U_+ \times G/U_+ \times G/U_+]^G  \cong  R(\delta_2).$
From \cite{Zh}, we get that the map on the graded pieces is
\begin{equation}
Hom_{U_{-}}(\C v_{\mu}, V(\lambda)\otimes V(\beta)) \to V_{\mu - \beta ,\beta}(\lambda) \otimes \C v_{\beta} \subset V(\lambda)\otimes \C v_{\beta}.\\
\end{equation}

In this way, $R(\delta_2)$ inherits
the dual canonical basis $B_3 = \coprod_{\mu, \beta, \lambda} B(\lambda)\otimes v_{\beta} \cap  V_{\mu - \beta ,\beta}(\lambda) \otimes v_{\beta},$ 
along with the multiplication operation in $R_G\otimes \C[T].$  This implies that the basis in $R(\delta_2)$ has the lower-triangularity property with respect to multiplication in $R(\delta_2).$

The algebra $R_G \otimes \C[T]$ has $4$ torus actions by $T \subset G,$ namely the left and right action on both components. 
We label these $T_1 \times T_2 \times T_3 \times T_4,$ from left to right.  A graded component of this action is the space
$V_{\mu}(\lambda) \otimes v_{\beta}.$  Here $\mu$ is the character of $T_1$, and $\lambda$ is the character
of $T_2.$  The character $\beta$ and $-\beta$ are associated to $T_3$ and $T_4.$

The algebra $R(\delta_2)$ has $3$ distinguished torus actions, one for each representation parameter, the character spaces of $T_a \times T_b \times T_c$ are the spaces $Hom_G(V(\mu), V(\lambda ) \otimes V(\beta)),$ where $T_a$ has character $\mu,$
$T_b$ has character $\lambda$ and $T_c$ has character $\beta.$  Under the map above, 
$T_a$ is the diagonal in $T_1 \times T_3,$ $T_b = T_2$ and $T_c = T_4.$
In this way, the subspace  $V_{\mu - \beta ,\beta}(\lambda)$ has $T_1$-character equal to $\mu-\beta$, $T_2$-character equal to $\lambda$ and $T_3 \times T_4$ character $(-\beta, \beta),$ so this space corresponds to the $(\mu - \beta + \beta, \lambda, \beta) = (\mu, \lambda, \beta)$ character space of $T_a \times T_b \times T_c.$

\begin{definition}
For a string parameterization $\bold{i},$ the cone $C_3(\bold{i})$ is defined by the following inequalities
on $(\lambda, \vec{t} ,\beta) \in \Delta \times C(\bold{i}) \times \Delta \subset \Delta \times \Z_{\geq 0}^N \times \Delta.$

\begin{enumerate}
\item $\sum_k d_k(\pi)t_k \geq 0$ for any $\bold{i}-$trail
from $\omega_j$ to $w_0 s_j\omega_j$ in $V(\omega_j),$  for all fundamental weights $\omega_j$ of the dual Langlands group.\\

\item $-\sum_k t_k \alpha_k + \lambda  + \beta \in \Delta$\\

\item $\sum_k d_k(\pi) t_k \geq H_{\alpha_j}(\beta)$  for any $\bold{i}-$trail
from $s_j\omega_j$ to $w_0\omega_j$ in $V(\omega_j),$   for all fundamental weights $\omega_j$ of the dual Langlands group.\\

\item $t_k + \sum_{j > k} a_{i_k, i_j} t_j \geq H_{\alpha_{i_j}}(\lambda)$\\
\end{enumerate}

\end{definition}

We can prove Theorem \ref{basisinherit} as follows. If $b_{\lambda, \vec{t}}\otimes v_{\beta} \in R_G \otimes \C[T]$ satisfies the above
conditions, then we can recover a third weight $\mu = \sum_k t_k \alpha_k - \lambda + \beta,$ and by the theorem of Berenstein and Zelevinksy, $b_{\lambda, \vec{t}} \otimes v_{\beta} \in V_{\mu - \beta, \beta}(\lambda)\otimes \C v_{\beta} \subset R(\delta_2) \subset R_G\otimes \C[T].$  By construction, if $b_{\lambda, \vec{s}}\otimes v_{\beta}$ is in $R(\delta_2),$ then it must be in some $V_{\mu - \beta, \beta}(\lambda)\otimes \C v_{\beta},$ and must have string parameters in $C_3(\bold{i}).$  

 Note that $C_3(\bold{i})$ is a rational cone, and specializing the parameters 
$\lambda, \beta, \mu$ yields the conditions from \cite{BZ1} which index the members
of the dual canonical basis in the space $V_{\mu - \beta ,\beta}(\lambda).$  Just as the elements of $C(\bold{i})$ have an ordering, we place an order on $C_3(\bold{i})$ where $(\lambda_1, \lambda_2, \lambda_3, \vec{s}) < (\eta_1, \eta_2, \eta_3, \vec{t})$
if $(\lambda_1, \lambda_2, \lambda_3) < (\eta_1, \eta_2, \eta_3)$ as dominant weights of $G^3,$
or $(\lambda_1, \lambda_2, \lambda_3) = (\eta_1, \eta_2, \eta_3)$ and $\vec{s} < \vec{t}$ 
lexicographically.  Multiplication in $R(\delta_2)$ is lower triangular because the same holds 
for $R_G\otimes \C[T].$

We can now carry out the same construction for the branching algebra $R(i_{L,G}).$
First we recall the two ways to see a branching algebra.

\begin{equation}
R(i_{L,G}) = {}^L[R_L\otimes R_G] \cong R_G^{U_L}\\
\end{equation}

\noindent
The algebra $R_G^{U_L}$ sits inside $R_G$ as the direct
sum of the invariant spaces $V(\lambda)^{U_L}.$  Each of
these spaces in turn decomposes over the dominant weights of $L.$

\begin{equation}
V(\lambda)^{U_L} = \bigoplus_{\eta \in \Delta_L} V_{I, \eta}(\lambda)\\
\end{equation}

\noindent
We have $V_{I, \eta}(\lambda) \cong Hom_L(V(\eta), V(\lambda)).$
The algebra $R(i_{L,G})$ comes with a natural bigrading by $\Delta_L \times \Delta_G,$
which matches the bigrading by pairs of dominant weights on the spaces $V_{I,\eta}(\lambda).$
By Theorem \ref{BZL} above, each space $V_{I, \eta}(\lambda)$ has a basis obtained by 
$B(I, \eta, \lambda) = B\cap V_{I, \eta}(\lambda),$ note that this is independent of the
string parameterization.  We choose a $\bold{i}$ that is adapted to $L$ as above, 
with the first part of the string a reduced factorization $w_0(I)^{-1}w_0$ and the second
part a factorization of $w_0(I).$

\begin{definition}
We define the cone $C_L(\bold{i}) \subset \Delta \times \Z_{\geq 0}^N$ for $\bold{i} = \bold{i}_1 \circ \bold{i}_2$ to 
be the subcone of $C(\bold{i})$ of strings with first $N- \ell(w_0(I)^{-1}w_0)$ entries $0,$ which satisfy the following inequalities. 

\begin{enumerate}
\item $\sum_{k >N- \ell(w_0(I)^{-1}w_0)} d_k(\pi)t_k \geq 0$ for any $\bold{i}_2-$trail
from $w_0(I)\omega_j$ to $w_0 s_j\omega_j$ in $V(\omega_j),$  for all fundamental weights $\omega_j$ of the dual Langlands group.\\

\item $\sum_{k >N- \ell(w_0(I)^{-1}w_0)} t_k \alpha_k -\lambda \in \Delta_L$\\

\item $t_k + \sum_{j > k} a_{i_k, i_j} t_j \geq H_{\alpha_{i_k}}(\lambda)$\\
\end{enumerate}

\end{definition}

 The cone $C_L(\bold{i})$ also has a partial ordering given by $(\eta_1, \lambda_1, \vec{s}) < (\eta_2, \lambda_2, \vec{t})$ if $(\eta_1, \lambda_1) < (\eta_2, \lambda_2)$
as dominant weights of $L \times G,$ or $(\eta_1, \lambda_1) = (\eta_2, \lambda_2)$ and $\vec{s} < \vec{t}$ 
lexicographically.  Both $R(\delta_2)$ and $R(i_{L,G})$ are then filtered by partially ordered monoids, as in
Proposition \ref{pval}.  The weight spaces in this filtration are all $1$ dimensional, and the associated graded rings
are $\C[C_3(\bold{i})]$ and $\C[C_L(\bold{i})]$ respectively.   We can apply a tuple of coweights to the dominant weight components above to obtain a valuation into $\Z^{N+1}$ with the lexicographic ordering by Proposition \ref{pval}.  Since the basis is indexed by the points of a rational cone, we can also use, Proposition \ref{pval2}, and Proposition \ref{reese} to prove Theorem \ref{basisinherit}.

\section{Degenerations of $R(\delta_n)$ and $R(i_{L,G})$}

In this section we combine valuations constructed from factorization diagrams with
those defined by the dual canonical basis to obtain toric degenerations of $R(\delta_n)$
and $R(i_{L,G}).$  We go through the proof for $\delta_n$ only, as the steps are identical for $i_{L,G}.$

After choosing a tree $\tree$ to give a filtration of $R(\delta_n),$  we assign a string $\bold{i}(v)$ to each internal 
vertex $v \in V(\tree),$ we call this a $\tree$-string.  Each space $W(\tree, \vec{\lambda})$ is a tensor product of spaces
$Hom_G(V(\lambda_1(v)), V(\lambda_2(v)) \otimes V(\lambda_3(v))),$ each with a partially ordered basis of dual canonical
basis members $b_{\lambda_1(v), \lambda_2(v),\lambda_3(v), \vec{s}}.$  The space $W(\tree, \vec{\lambda})$ therefore
has a basis of factorization diagrams labeled by string parameters at each internal vertex. 

\begin{equation}
b_{\tree, \vec{\lambda}, \vec{s}} = \bigotimes_{v \in V(\tree)} b_{\lambda_2(v), \lambda_1(v),\lambda_3(v), \vec{s}(v)}\\
\end{equation}

\noindent
The labels form a cone $C_{\tree}(\bold{i}),$ which is a fiber product of $|V(\tree)|-$ copies of $C_3(\bold{i})$ over copies of $\Delta_G$ assigned to the edges of $\tree.$  This cone has a partial ordering, where $(\tree, \vec{\lambda}, \vec{s}) < (\tree, \vec{\eta}, \vec{t})$ if $\vec{\lambda} < \vec{\eta}$ as dominant weights of $G^{|E(\tree)|},$ or $\vec{\lambda} = \vec{\eta},$ and $\vec{s} < \vec{t}$ lexicographically, by the ordering induced by choosing an ordering on the internal vertices of $\tree.$  

\begin{proposition}
Multiplication in $R(\delta_n)$ with respect to the basis $b_{\tree, \vec{\gamma}, \vec{s}}$ is lower triangular. 
\begin{equation} 
b_{\tree, \vec{\eta}, \vec{t}} \times b_{\tree, \vec{\gamma}, \vec{s}} = b_{\tree, \vec{\eta} + \vec{\gamma}, \vec{t} + \vec{s}} + \sum C_{\tree, \vec{\beta}, \vec{r}} b_{\tree, \vec{\beta}, \vec{r}}\\
\end{equation}

where $(\tree, \vec{\beta}, \vec{r}) <  (\tree, \vec{\eta} + \vec{\gamma}, \vec{t} + \vec{s}).$
\end{proposition}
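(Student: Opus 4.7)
The plan is to combine two already-established lower-triangularity results: the filtration by vertex weights coming from the tree factorization (Theorem \ref{branchfiltration}), and the lower-triangular multiplication of the dual canonical basis in each local factor $R(\delta_2)$ (Theorem \ref{basisinherit}).

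First I would apply Theorem \ref{branchfiltration} to the factorization of $\delta_n$ defined by the trivalent oriented tree $\tree$. This gives a filtration
\begin{equation}
W(\tree, \vec{\eta})\, W(\tree, \vec{\gamma}) \subset \bigoplus_{\vec{\beta} \leq \vec{\eta} + \vec{\gamma}} W(\tree, \vec{\beta}),
\end{equation}
so that the product $b_{\tree, \vec{\eta}, \vec{t}} \cdot b_{\tree, \vec{\gamma}, \vec{s}}$ automatically lies in the sum of $W(\tree, \vec{\beta})$ for $\vec{\beta} \leq \vec{\eta} + \vec{\gamma}$. All contributions with $\vec{\beta} < \vec{\eta} + \vec{\gamma}$ are immediately lower in the partial ordering on $C_{\tree}(\bold{i})$, so it remains only to analyze the top weight component $\vec{\beta} = \vec{\eta} + \vec{\gamma}$.

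Next I would identify the top-weight component with a product computed in the associated graded algebra, which by Theorem \ref{branchfiltration} embeds into $\bigotimes_{v \in V(\tree)} R(\delta_2)$. Since the basis element $b_{\tree, \vec{\lambda}, \vec{s}}$ is by definition a pure tensor $\bigotimes_v b_{\lambda_2(v), \lambda_1(v), \lambda_3(v), \vec{s}(v)}$, its product decomposes vertex-by-vertex. At each $v$ Theorem \ref{basisinherit} gives
\begin{equation}
b_{\eta_2(v), \eta_1(v), \eta_3(v), \vec{t}(v)} \cdot b_{\gamma_2(v), \gamma_1(v), \gamma_3(v), \vec{s}(v)} = b_{(\vec{\eta}+\vec{\gamma})(v), (\vec{t}+\vec{s})(v)} + \sum_{\vec{r}(v) <_{\mathrm{lex}} (\vec{t}+\vec{s})(v)} c_{v,\vec{r}(v)}\, b_{(\vec{\eta}+\vec{\gamma})(v), \vec{r}(v)},
\end{equation}
where the correction terms have strictly smaller string parameters in the local lexicographic order (the weights at $v$ are forced to equal $(\vec{\eta} + \vec{\gamma})(v)$ because we are in the weight-$\vec{\eta}+\vec{\gamma}$ component).

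Finally I would expand the resulting tensor product over internal vertices using the fixed ordering on $V(\tree)$ chosen in the definition of $C_{\tree}(\bold{i})$, and observe that each mixed term (where at least one vertex uses a correction rather than the leading term) is strictly smaller in the lexicographic order on $\bigl(\vec{s}(v)\bigr)_{v}$, hence strictly smaller in the partial order on $C_{\tree}(\bold{i})$. Adding back in the strictly lower-weight contributions from the factorization filtration completes the lower-triangular expansion. The main subtlety is bookkeeping: one must verify that the two partial orders (by vertex-weights and by tuples of strings) assemble into the single ordering declared on $C_{\tree}(\bold{i})$, but this is exactly the lexicographic ordering built into the definition, so no genuine obstacle arises once the two ingredients are aligned.
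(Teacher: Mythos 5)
Your proof is correct and is an unpacking of exactly the argument the paper gives (the paper's own proof is the single sentence ``this follows from the lower triangularity of the multiplication rules for branching filtrations and the dual canonical bases''). The steps you spell out --- filter by the tree factorization using Theorem \ref{branchfiltration}, identify the top-weight component with a product in the associated graded algebra $[\bigotimes_v R(\delta_2)]^{T\times\cdots\times T}$, apply the vertex-local lower triangularity of the dual canonical basis in each $R(\delta_2)$, and then match the resulting order on tuples of strings with the declared lexicographic order on $C_{\tree}(\bold{i})$ --- are precisely the content being invoked.
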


\begin{proof}
This follows from the lower triangularity of the multiplication rules for branching filtrations and the dual canonical bases.   
\end{proof}
\noindent
We can now apply Propositions \ref{pval2} and \ref{reese} as we did on the algebras $R(\delta_2)$ and $R(i_{L,G})$ to obtain Theorems \ref{diagtoric} and \ref{levitoric}.

\section{Tensor products in type $A$}\label{examples}

  The cones
$C_3(\bold{i})$ are useful in that they give toric degenerations
of the full tensor algebra, but they are a challenge to write down in practice. 
For type $A$ partial results were obtained by Berenstein and Zelevinksy in \cite{BZ2},
with a full answer given by Gleizer and Postnikov in \cite{GP} in terms of a device they call
"web functions." These are equivalent to the honeycomb polytopes
of Knutson - Tao used in the proof of the saturation conjecture for $GL_n(\C),$ \cite{KT}, and the Berenstein Zelevinsky triangles, first described in \cite{BZ2}.
These are assignments of non-negative integers to the intersection points of diagrams
like figure \ref{triangle} below, with the condition that pairs on opposite sides of the same hexagon add to the same number.

\begin{figure}[htbp]
\centering
\includegraphics[scale = 0.29]{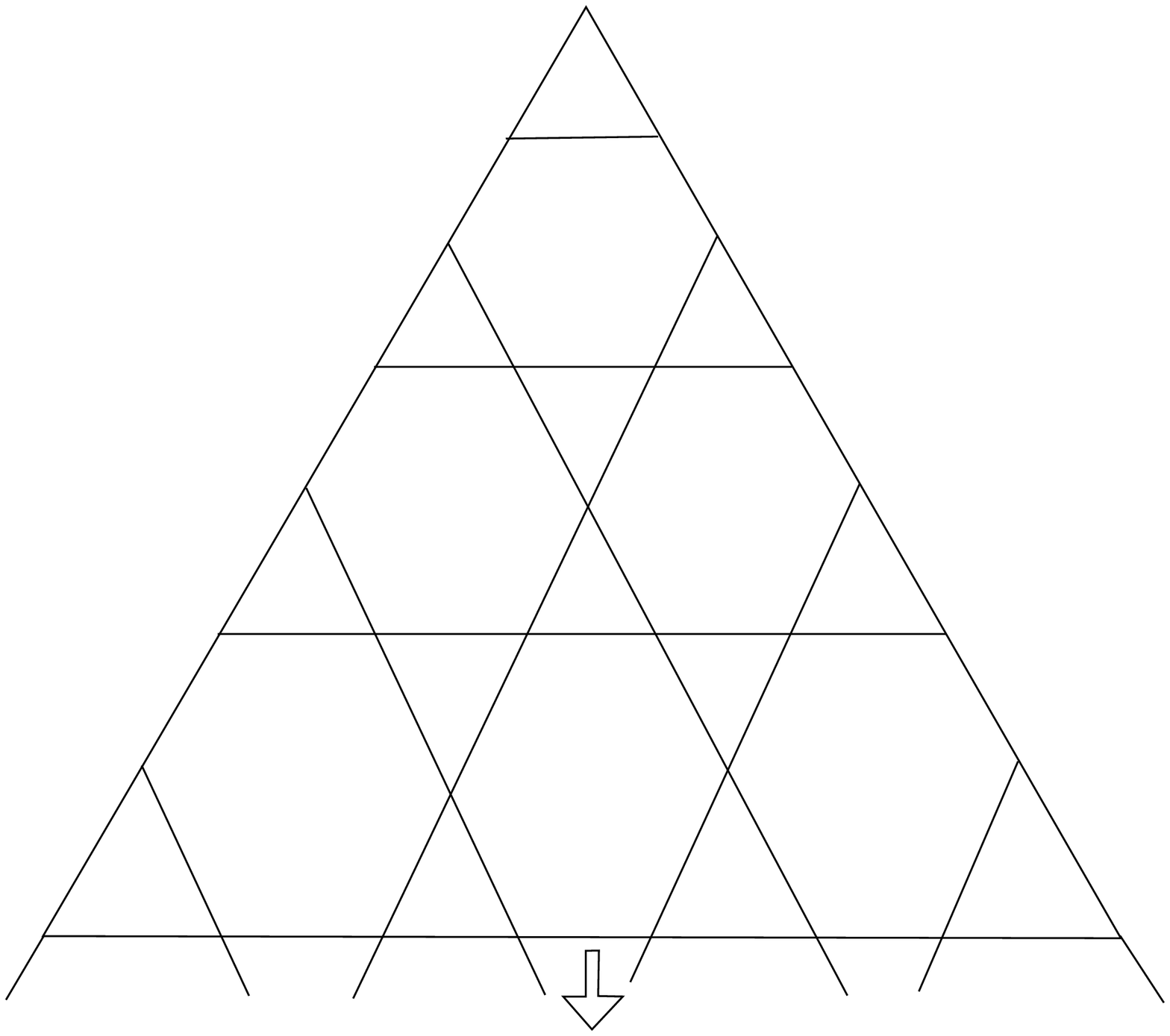}
\caption{}
\label{triangle}
\end{figure}

Note that the sides of this triangle are lined with small triangles, $K_3(SL_m(\C))$ is all such assignments to the diagram with $m-1$ small triangles to a side. We orient the triangle counter-clockwise. For a particular weighting $w$ of this diagram, 
let $\lambda_1(w),$ $\lambda_2(w),$ and $\lambda_3(w)$
be the vectors of numbers obtained from sides $1$ $2$ and $3$ of the diagram
by adding the pairs of numbers assigned to the vertices of the little triangles, so the $j-$th
entry of $\lambda_i(w)$ is the sum of the two numbers assigned to the two vertices on the $j-th$ little triangle bordering the $i$-th side. These numbers define dominant weights of $SL_m(\C)$ by $\sum \lambda_i(w)_j \omega_j,$ where $\omega_j$ is the $j-$th fundamental weight.   A weighting $w \in K_3(SL_m(\C))$ represents an invariant in the triple tensor product $V(\lambda_1(w))\otimes V(\lambda_2(w))\otimes V(\lambda_3(w)).$
The following is essentially proved in \cite{BZ3} and \cite{BZ2}.

\begin{proposition}
For a particular choice of $\bold{i},$ there is a commutative diagram of linear maps of cones.

$$
\begin{CD}
K_3 @>>> C_3(\bold{i})\\
@VVV @VVV\\
\R^{3r} @>d_2>> \R^{3r}\\
\end{CD}
$$

\noindent
Here the vertical maps are projection onto triples of dominant weights, the bottom horizontal map is duality on the middle dominant weight, and the top map is a lattice isomorphism of cones.  
\end{proposition}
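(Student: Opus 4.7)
The plan is to build the top horizontal arrow by directly invoking the explicit parametrizations of triple tensor product invariants constructed by Berenstein and Zelevinsky in \cite{BZ2}, \cite{BZ3}, and to verify that these parametrizations intertwine the two projections.

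First, I would fix the reduced word $\bold{i}$ to be the standard lexicographic reduced expression for $w_0 \in W(SL_m(\C))$, namely $\bold{i} = (1, 2, 1, 3, 2, 1, \ldots, m-1, m-2, \ldots, 1)$. For this particular string, Berenstein and Zelevinsky give in \cite{BZ2} an explicit affine-linear formula which, for each triple of dominant weights $(\lambda_1, \lambda_2, \lambda_3)$ with $\lambda_1 + \lambda_3 - \lambda_2^* \in Q_+$, sets up a bijection between Berenstein-Zelevinsky triangles with boundary weights $(\lambda_1, \lambda_2, \lambda_3)$ and dual canonical basis elements of $V_{\lambda_2^* - \lambda_3, \lambda_3}(\lambda_1)$ parametrized by the fiber of $C_3(\bold{i})$ over $(\lambda_1, \lambda_2^*, \lambda_3)$. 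The hexagon relations defining BZ triangles translate, under this formula, into the $\bold{i}$-trail inequalities defining $C_3(\bold{i})$; conversely, from the string data $(\vec t, \lambda, \beta)$ one recovers the vertex labels of the triangle by a triangular system of linear equations.

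Next, assembling this pointwise construction across all boundary weights yields a single piecewise-linear, in fact linear, map $\Phi : K_3 \to C_3(\bold{i})$. To check that $\Phi$ is a lattice isomorphism of cones, I would verify three things using the BZ formula: $\Phi$ is linear with integer matrix, $\Phi$ has integer inverse (produced by the triangular recursion on vertex labels), and both cones are cut out by facet inequalities which correspond term-by-term under $\Phi$. The two cones have the same dimension, $3r$ plus the number of internal parameters, because the fibers above matching weight triples are in bijection.

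Finally, I would verify commutativity of the square. The vertical maps send a triangle $w$ to $(\lambda_1(w), \lambda_2(w), \lambda_3(w))$ and a string to its underlying triple of dominant weights $(\lambda, \mu, \beta)$ respectively. By construction the BZ triangle with boundary $(\lambda_1, \lambda_2, \lambda_3)$ corresponds to a canonical basis member of $Hom_G(V(\lambda_2^*), V(\lambda_1) \otimes V(\lambda_3))$, hence lives in the fiber of $C_3(\bold{i})$ over $(\lambda_1, \lambda_2^*, \lambda_3)$; this is precisely duality $d_2$ applied to the middle entry. So $\Phi$ intertwines the two projections.

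The main obstacle is step two: verifying that the BZ formula translates hexagon relations and triangle positivity into the $\bold{i}$-trail inequalities of Theorem \ref{BZ3}. This is genuinely a combinatorial computation about $\bold{i}$-trails in fundamental representations of $SL_m(\C)$, but it is exactly the content of \cite{BZ2} and \cite{BZ3}, so the work reduces to pointing out that the four families of inequalities defining $C_3(\bold{i})$ match the four types of relations (two sums of opposite hexagon edges, the weight balance, and the nonnegativity of vertex labels) defining $K_3$ under the chosen $\bold{i}$.
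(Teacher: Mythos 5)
Your proposal is correct and follows essentially the same route as the paper, which offers no independent argument but simply notes that the statement "is essentially proved in" Berenstein--Zelevinsky \cite{BZ2}, \cite{BZ3}: fix the distinguished reduced word, invoke their linear bijection between BZ-triangle data and string parameters of the dual canonical basis in the triple-multiplicity spaces, and observe that matching boundary weights forces the dualization $d_2$ on the middle weight. Your sketch merely makes explicit the bookkeeping (linearity, integrality of the inverse, facet-by-facet matching of hexagon relations with $\bold{i}$-trail inequalities) that the paper leaves to the cited references.
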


The string cone $C_3(\bold{i})$ in this theorem is the cone of "partitions,"  defined in \cite{BZ2}, which realize the Littlewood-Richardson rule.  The full tensor product algebra $R(\delta_n)$ therefor degenerates to the toric algebra of the fiber product 
of $K_3(SL_m(\C))$ over a trivalent tree $\tree.$  We represent elements of this monoid with labellings of "quilt diagrams" of BZ triangles, shown in figure \ref{quilt}.

\begin{figure}[htbp]
\centering
\includegraphics[scale = 0.45]{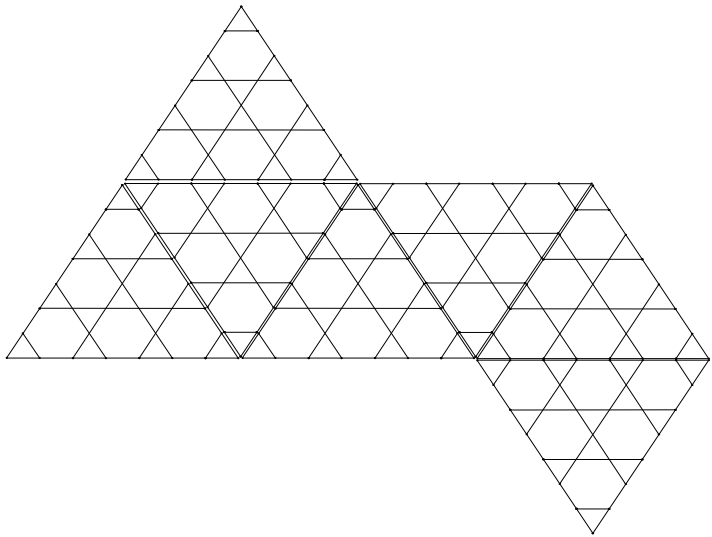}
\caption{A quilt diagram for $R(\delta_8)$}
\label{quilt}
\end{figure}

Each triangle is a BZ triangle, triangles which border each other must have the 
same dominant weights along their edges, and $\tree$ can be recovered as the dual tree
to the arrangement of triangles.  We refer to this semigroup as $K_{\tree}(SL_m(\C)).$

We give $K_{\tree}(SL_m(\C))$ as a particular product of Theorem \ref{diagtoric} which appears to be amenable to computational methods.  Deciphering the structure of this semigroup for small $m$ should not be difficult.   We also present BZ quilts as a possible tool for studying other interesting algebras.  The subsemigroup $K_{\tree}(\vec{r}, SL_m(\C))$ of quilts which have a multiple of $(r_1 \omega_1, \ldots, r_n \omega_1)$ for boundary weights is a toric degeneration of the projective coordinate ring $\C[Gr_m(\C^n)//_{\vec{r}} T]$ the weight variety at $\vec{r}$ of the Grassmannian.   This is a classical algebra from invariant theory, and not much is known about it outside the case $m = 2,$ see \cite{HMSV}.   We also observe that a similar analysis could be carried out using a Levi branching degeneration using Theorem \ref{levitoric}.


\end{document}